\author{G\'abor Pataki, Aleksandr Touzov \footnote{Department of Statistics and Operations Research, UNC Chapel Hill}} 
\title{An echelon form of weakly infeasible semidefinite programs and bad projections of the psd cone}
\mathchardef\mhyphen="2D
\DeclareMathAlphabet{\mathcal}{OMS}{cmsy}{m}{n}
\newcommand{\co}[1]{}
\newcommand{\B}{\mathcal B}
\newcommand{\I}{\mathcal I}
\newcommand{\R}{\mathcal R}
\newcommand{\N}{\mathcal N}
\newcommand{\M}{\mathcal M}
\newcommand{\A}{\mathcal A}
\newcommand{\V}{\mathcal V}
\newcommand{\T}{\mathcal T}
\newcommand{\rad}[1]{\mathbb{R}^{#1}}
\newcommand{\sym}[1]{{\cal S}^{#1}}
\newcommand{\psd}[1]{{\cal S}_+^{#1}}
\newcommand{\psdn}{{\cal S}_+^{n}}
\newcommand{\eref}[1]{(\ref{#1})} 
\newcommand{\symn}{\sym{n}}
\newcommand{\np}{{\cal NP}}
\newcommand{\conp}{{\rm co \mhyphen}{\cal NP}}
\newcommand{\lin}{\operatorname{lin}}
\newcommand{\myrank}{\operatorname{rank}}
\newcommand{\leqnomode}{\tagsleft@true}
\newcommand{\reqnomode}{\tagsleft@false}
\renewcommand*{\arraystretch}{1.0}
\newtheorem{example}{Example}
\newtheorem{lemma}{Lemma}
\newtheorem{definition}{Definition}
\newtheorem{remark}{Remark}
\newtheorem{theorem}{Theorem}
\newtheorem{corollary}{Corollary}
\newtheorem{case}{Case}
\begin{document}

\maketitle

\begin{abstract}
A weakly infeasible semidefinite program (SDP) has no feasible solution, but  it has approximate solutions whose constraint violation is arbitrarily small.
These SDPs are ill-posed and numerically  often unsolvable. They are also closely related to ``bad" linear projections that map the cone of positive semidefinite matrices to a nonclosed set. We describe   a simple echelon form of weakly infeasible SDPs with the following properties: (i) it is obtained by elementary row operations and congruence transformations, (ii) it makes weak infeasibility evident, and (iii) it permits us to construct any weakly infeasible SDP or bad linear projection by an elementary combinatorial algorithm. Based on our echelon form we generate a  library of  computationally very difficult  SDPs.
Finally, we show that some  SDPs in the literature are in our echelon form, for example, the SDP from the sum-of-squares relaxation of minimizing Motzkin's famous  polynomial.
\end{abstract}

{\em Key words:} Semidefinite programming, Weak infeasibility, Bad projection of the semidefinite cone, Ill-posed problems, Facial reduction, Motzkin polynomial

% \PACS{PACS code1 \and PACS code2 \and more}
{\em MSC 2010 subject classification:} Primary: 90C22, 49N15, 15A21 Secondary: 47A52

% 90C22: semidefinite programming
% 49N15  Duality theory (optimization)
%  15A21  Canonical forms, reductions, classification
% 47A52 Linear operators and ill-posed problems, regularization
% 47J06 Nonlinear ill-posed problems 
\section{Introduction}
\label{intro}

Semidefinite programming (SDP) feasibility problems of the form 
\begin{equation}  \label{p} \tag{P}
\begin{array}{ccl}
\A X \!\!\!\!  &=& \!\!\!\!  b\\
X \!\!\!\!  & \in  & \!\!\!\!  \psd{n},  
\end{array} 
\end{equation} 
are fundamental in many areas of applied mathematics, including 
combinatorial optimization, polynomial optimization, control theory, and machine learning. Here $\A$ is a linear map from $n \times n$ symmetric matrices to $\rad{m}, \, b$ is in $\rad{m},  \, $ and 
$\psd{n} \, $ is the set of symmetric positive semidefinite (psd) matrices.

Semidefinite programs -- either in the feasibility form, or in an optimization form, with an objective function attached --  are often pathological  and  this work focuses on their 
pathological kind of infeasibility, called {\em weak infeasibility}.  
Precisely, we  say that \eqref{p} is weakly infeasible when it 
has no feasible solution, but the set of solutions of the linear system of equations  
$\A X = b\, $ has zero distance to $\psd{n}.$

\begin{example} \label{example-small}

An enlightening,  classical, and minimal example is %A minimal and classical example is 
\begin{equation}\label{problem-small} \tag{{ME}}
\begin{array}{rcl}
x_{11} &  = & 0 \\
x_{12}  = x_{21} & = & 1 \\
X  & \in  & \psd{2},
\end{array}
\end{equation}
where the $(i,j)$th element of $X$ is denoted by $x_{ij}.$ If $X$ satisfies the equality constraints of \eqref{problem-small}, then
$$
X =  \begin{pmatrix} 0 & 1 \\
	1            & x_{22}
	\end{pmatrix}
$$
for some $x_{22}$ real number, so $X$ cannot be positive semidefinite. Hence \eqref{problem-small} is infeasible.   
However, such $X$ matrices converge to $\psd{2}$   if we choose $x_{22} >0 \, $ to be large: then to make $X$ psd, we must only slightly change its $0$ entry to $1/x_{22}. \,$ 
So we conclude that  \eqref{problem-small}  is weakly infeasible. 
	
We visualize this example in Figure \ref{fig:asymptote}. The solid blue set bordered by a hyperbola is 
	$$
	S \, = \, \bigl\{ \, (x_{11}, x_{22}) \in \rad{2}_+~:~ x_{11} x_{22} \geq 1 \, \bigr\},
	$$
the set of diagonals of $2 \times 2$ psd matrices whose offdiagonal elements are $1.$ We approach $S$  arbitrarily closely  if we fix $x_{11}=0$ and make $x_{22}$ large, moving towards infinity on the $x_{22}$ axis.
\begin{figure}[H]
	\centering 
	\begin{tikzpicture}[scale=0.6, every node/.style={scale=0.7}]
	
    \fill [blue!10, domain=0.2:5, variable=\x]
        (0.2, 5)
        -- plot[samples = 100] ({\x}, {1/\x})
        -- (5, 0.2)
        -- (5,5)
        -- cycle;
    \draw [blue!40, thick]
        (0.2, 5)
        -- (5,5)
        -- (5, 0.2);
    \draw [very thick, blue!80, domain=0.2:5, variable=\x]
        plot[samples = 100] ({\x}, {1/\x});
        
    \fill (0,0) circle(2.75pt);
    
    \draw [->,draw=black,very thick] (-0.0,0) -- (5.1,0);
    \draw [->,draw=black,very thick] (0,-0.0) -- (0,5.1);
    
    % \draw [-,draw=black, thick, line cap=round] (1,0) -- (1,0.1);
    % \draw [-,draw=black, thick, line cap=round] (2,0) -- (2,0.1);
    % \draw [-,draw=black, thick, line cap=round] (3,0) -- (3,0.1);
    % \draw [-,draw=black, thick, line cap=round] (4,0) -- (4,0.1);
    
    % \draw [-,draw=black, thick, line cap=round] (0,1) -- (0.1,1);
    % \draw [-,draw=black, thick, line cap=round] (0,2) -- (0.1,2);
    % \draw [-,draw=black, thick, line cap=round] (0,3) -- (0.1,3);
    % \draw [-,draw=black, thick, line cap=round] (0,4) -- (0.1,4);

    \node[] at (5,-0.5) { \Large$x_{11}$};
    \node[] at (-0.6,5) { \Large$x_{22}$};
    \node[blue] at (2.75,2.75) { \huge$S$};
    
    \node[] at (-0.5,-0.5) { \large$(0,0)$};
	
	\end{tikzpicture}
	\caption{A visualization of \eqref{problem-small}} 
	\label{fig:asymptote}
\end{figure}

\end{example}

Weakly infeasible SDPs appear in the literature in many guises, some of which are modern and some others classic: 

\begin{itemize}
    \item they are difficult SDPs that are often mistaken for feasible ones by even the best solvers.
    
    \item they are closely related to linear maps under which the image of $\psdn$ is not closed. 
    Precisely,  \eqref{p} is weakly infeasible, if it is infeasible, but a sequence 
    $\{ X_i \} \subseteq \psdn \;  $ satisfies  
    $$\A X_i \rightarrow b, \; \text{as} \; i \rightarrow + \infty,$$ 
    in other words, when $b$ is in the  closure of $\A \psd{n}, \, $ but not in $\A \psd{n}$ itself. 
    %Conversely, if $\A$ is a linear map from $n \times n$ symmetric matrices to $\rad{m}, \,$ and 
       Such linear maps cause other pathologies in SDPs, such as unattained optimal values and positive duality gaps \cite[Lemma 2]{Pataki:17}.  They  
    are also intriguing from the viewpoint of pure mathematics:  they  were  recently christened  {\em bad projections of the psd cone} by Jiang and Sturmfels \cite{jiang2020bad}  and explored from the perspective of algebraic geometry.

More broadly, bad projections are a good example of linear maps that carry a closed set into a nonclosed one, because  $\psdn$ is one of the simplest sets for which such maps even exist. The ``(non)closedness of the linear image" question appears  in several equivalent forms, for example, we may ask whether the sum of closed convex cones is closed. (Non)closedness of the linear image can be ensured by many diverse conditions:  some of the key ones are  
		Jameson's property (G) (Jameson \cite{jameson1972duality} and Bauschke et al. \cite{bauschke1999strong})
	and existence of certain tangent directions 
	\cite[Theorem 1.1, Theorem 5.1]{Pataki:07}.

\item they are {\em ill-posed}, i.e., their distance to the set of feasible instances is zero. Hence their infeasibility 
	cannot be detected by interior point methods  whose complexity depends on the distance to feasibility; 
		the best we   can do 
	is compute solutions of nearby feasible instances, see Pe{\~n}a and Renegar  \cite[Theorem 13]{pena2000computing}. Nor can we detect their infeasibility by the algorithm of Nesterov et al. in \cite{nesterov1999infeasible}, though 
	this algorithm can detect {\em near} ill-posedness.
	
	For a sample of the thriving literature on algorithm analysis based on 
	distance to (in)feasibility, we refer to 
	Renegar \cite{renegar1994some, renegar1995linear}; Pe{\~n}a  \cite{pena2000understanding}; and the comprehensive book by 
	B{\"u}rgisser and Cucker \cite{burgisser2013condition}.
				
	\item according to a classic viewpoint of  Klee  \cite{klee1961asymptotes}, when \eqref{p} is weakly infeasible, the affine subspace $\{ \, X \, : \, \A X = b \, \}$  is an {\em asymptote} of $\psd{n}.$ The asymptotic behavior is indeed apparent on Figure \ref{fig:asymptote}. \footnote{To be precise, although Klee introduced the notion of asymptotes, he did not specifically mention asymptotes of the psd cone.}
\end{itemize}

Although the infeasibility of  weakly infeasible SDPs cannot be reliably detected in general, several algorithmic approaches are available:

\begin{itemize}
	
		\item 	Facial reduction algorithms 
		can handle pathological SDPs, at least in theory,  since 
		they must be implemented in exact arithmetic. Facial reduction  originated in a paper by Borwein and Wolkowicz \cite{BorWolk:81}, then simpler variants 
		were proposed, for example  in \cite{pataki2000simple, Pataki:13}. We will use facial reduction as a theoretical tool 
		to develop our echelon form.
		In particular, we will use Parts (1) and (2) of Theorem 5 in \cite{liu2017exact}; an 
		earlier version of the latter 
		appeared in Louren{\c{c}}o et al. \cite{lourencco2016structural}.

	\item In contrast to the previous points, approximate, or robust solutions to SDPs in the Lasserre hierarchy of polynomial optimization can be  found by SDP solvers, even when exact solutions are impossible to compute:
	 see Henrion and Lasserre \cite{henrion2005detecting}
	and Lasserre and Magron \cite{lasserre2019sdp}. Some of these SDPs are weakly infeasible and one of them comes from minimizing the famous Motzkin polynomial. We closely examine this SDP in Example \ref{example-motzkin}. In other related work, the Douglas-Rachford method presented in Liu et al. \cite{liu2019new} successfully identified infeasibility of the weakly infeasible SDPs from \cite{liu2017exact}. 
\end{itemize}

To sketch our contributions, we revisit Example \ref{example-small}, where we naturally describe the affine subspace
\begin{equation} \label{eqn-H} 
H \, = \, \{ X  ~|~ X \, \text{is} ~ n ~ \text{by} ~ n ~ \text{symmetric}, ~ \A X = b \, \}
\end{equation}
in two ways. First, with equations $A_1 \bullet X = 0 \, $ and $A_2 \bullet X = 2, \, $ where
\begin{equation} \label{eqn-A1A2} 
A_1 = \begin{pmatrix} 1 & 0 \\ 0 & 0 \end{pmatrix} ~\text{and}~ A_2 = \begin{pmatrix} 0 & 1 \\ 1 & 0 \end{pmatrix}
\end{equation}
and the $\bullet$ product of symmetric matrices is the trace of their regular product. By the argument in Example \ref{example-small}, this representation certifies that \eqref{problem-small} is infeasible.

Besides,  $H = \{\lambda X_1 + X_2  ~:~ \lambda \in \rad{} \}$ where 
\begin{equation}  \label{eqn-X1X2}  
X_1 = \begin{pmatrix} 0 & 0 \\ 0 & 1 \end{pmatrix} ~ \text{and} ~ X_2 = \begin{pmatrix} 0 & 1 \\ 1 & 0 \end{pmatrix}. 
\end{equation} 
This generator representation  proves that $H$ is an asymptote of $\psd{2}, \, $ since  $ \lambda X_1 + X_2$ approaches $\psd{2}$ as   $\lambda \rightarrow + \infty.$

We see that $A_1, A_2$ and $X_1, X_2$ share a common ``echelon" structure and we may wonder whether such a structure  appears in every weakly infeasible SDP. The answer is naturally no, since we can easily ruin  this structure  even in \eqref{problem-small}. For example, we may take linear combinations of the equations  and perform congruence transformations,
in other words, replace both $A_i$ by $T^{\top} A_i T$ for some invertible $T.$

However,  it turns out that the  same operations can untangle  any weakly infeasible SDP. 
More precisely,  in Theorem \ref{thm-weak-ref} we develop an echelon form  of weakly infeasible SDPs with  the  following  features: i) it is constructed using elementary row operations and congruence transformations; ii)  it makes weak infeasibility evident, since  
the  matrices both in  the equality  and in the generator representation of the underlying affine subspace have the same echelon structure; and iii)  it permits us  to construct any weakly infeasible SDP.

Let us explain  the last point by an analogy with basic linear algebra.  We know that any infeasible linear system of equations $Ax = b$  can be brought to a  normal form 
\begin{equation} \label{eqn-Ax=b-normal} 
\begin{array}{rcl}
A^\prime x & = & b^\prime \\
0^{\top} x & = & 1
\end{array}
\end{equation}
using elementary row operations. Thus we can verify infeasibility of a linear system using the normal form \eqref{eqn-Ax=b-normal}. Further, we can construct any infeasible linear system as follows: we choose $A^\prime$ and $b^\prime$ in \eqref{eqn-Ax=b-normal} arbitrarily, then perform elementary row operations. This basic algorithm always succeeds and every infeasible linear system is among its outputs.

This work shows that  a similar scheme works for a more involved pathology -- weak infeasibility -- in a much more involved problem -- an SDP. 

Further, in Example \ref{example-motzkin} we present an SDP that is naturally in our echelon form, without ever having to perform elementary row operations or congruence transformations. This SDP arises from a sum-of-squares (SOS) relaxation of minimizing the famous Motzkin polynomial; we thus hope that our work will be of interest to the 
sum-of-squares optimization community. % working on sum-of-squares optimization.

The plan of  the paper is as follows. In Section \ref{section-prelim} we review preliminaries consisting of basic linear algebra and SDP duality. In Section \ref{section-mainresults} we present and illustrate our main result, Theorem \ref{thm-weak-ref}, and to build intuition, we prove the ``easy" direction. In Section \ref{section-generate} we describe our algorithm to construct weakly infeasible SDPs and show that any weakly infeasible SDP is among its outputs. Our algorithm also constructs any bad projection of the psd cone. For the reader's convenience, some of the proofs are postponed to Section \ref{section-certificate-separately} and \ref{section-proof-thm1}. The most difficult proof is the ``hard" direction in Theorem \ref{thm-weak-ref}, which we give in Section \ref{section-proof-thm1}. Section \ref{section-computational} describes our problem library and computational tests. In Section \ref{section-discussion} we reinterpret Theorem \ref{thm-weak-ref} in two ways: as a ``sandwich" theorem and as a ``factorization" theorem. Here we also discuss open research directions.

To make the paper's results accessible to a broad audience, we prove them using only basic results in SDP duality and linear algebra, all of which we summarize in Section \ref{section-prelim}. This work has some unavoidable overlap with \cite{liu2017exact}, where the lemmas of Section \ref{section-certificate-separately} were already proved. On the other hand, here we prove these lemmas in a more elementary fashion. Reference \cite{liu2017exact} also gave a scheme to construct weakly infeasible SDPs in a certain restricted class; however, that scheme does not capture even some weakly infeasible SDPs with	$3 \times 3$ matrices. 
We comment in detail on these points in Section \ref{section-certificate-separately} and Section \ref{section-proof-thm1}.

\section{Preliminaries}
\label{section-prelim}

\paragraph{Operators and matrices}
For a linear operator (or matrix)  $\M, \,$ we denote its rangespace by $\R(\M)$ , its nullspace by $\N(\M)$ , and  its adjoint
by $\M^*.$

We denote the set of $n \times n$ symmetric matrices by $\sym{n}.$  Further, $N$ stands for the set $\{1,\dots,n\}$.

Given a matrix $M \in \mathbb{R}^{n \times n}$ and $R, S  \subseteq N, \,$ we denote the submatrix of $M$ corresponding to rows in $R$ and columns in $S$ by $M(R,S).$ When $R = \{r\}$ is a singleton, we simply write $M(r,S)$ for $M(\{r\}, S).$  For brevity, we let $M(R):=M(R,R).$

We denote the  concatenation of  matrices $A$ and $B$  along the diagonal by $A \oplus B, \, $
$$
A \oplus B := \begin{pmatrix} A & 0 \\
0  &  B \end{pmatrix}.
$$
Thus,  $M \oplus 0$  is the matrix obtained by attaching zero rows and colums to $M.$ The dimensions of $M \oplus 0$ will be clear from the context. 

Further,  $X \succeq 0$ means that the matrix $X$ is symmetric and positive semidefinite and 
$X \succ 0$ means it is symmetric and positive definite.

\paragraph{Basics of SDP duality}

Consider the pair of SDPs
\begin{center}
	\begin{minipage}{0.5\linewidth}
		\leqnomode
		\begin{equation}\label{p-opt}
		%\hspace{-3cm} 
		\begin{split}
		\hspace{0.75cm}\inf & ~~ C \bullet X \\
		\hspace{0.75cm}s.t.   & ~~  X ~  \text{is feasible in  } \eqref{p} \\
		\end{split}\tag{{P-\text{opt}}} 
		\end{equation}
	\end{minipage}%
	\begin{minipage}{0.5\linewidth}
		\begin{equation}\label{d}
		\begin{split}
		\sup  & ~~ b^{\top} y   \\
		s.t. & ~~  \A^*y \preceq C,   
			\end{split}\tag{D}
		\end{equation}
	\end{minipage}
\end{center}
where $C \in \symn,$ and for $T, S \in \symn$ we write $T \preceq S$ to say  $S - T \succeq 0. \,$ %is positive semidefinite.
We say that \eqref{d} is the dual of \eqref{p-opt} and vice versa, \eqref{p-opt} is the dual of \eqref{d}.

When both are feasible, the optimal value of \eqref{p-opt} is at least as large as the optimal value of \eqref{d}. These optimal values agree and the optimal value of \eqref{p-opt} is attained when \eqref{d} satisfies {\em Slater's condition}, i.e., when there is  $y \in \rad{m}$  such that $C - \A^*y \succ 0.$

We say that \eqref{p} is {\em strongly infeasible}, if the distance of the affine subspace $H$ (see  \eqref{eqn-H}) from  $\psd{n}$ is positive. 
By this definition, we see that every infeasible SDP is either strongly or weakly infeasible.
We know that \eqref{p} is strongly infeasible exactly when its {\em alternative system}
\begin{equation} \label{p-alt} \tag{P-\text{alt}} 
\begin{array}{rcl}
\A^* y & \succeq & 0 \\
b^{\top} y & = & -1 
\end{array} 
\end{equation}
is feasible. In other words, strong infeasibility of \eqref{p} is certified by \eqref{p-alt}.

	  We will use the above results as building blocks, since  % for proving our results. 
	 they can be proven in a few pages,
	   with  real analysis and elementary linear algebra as  sole prerequisites; see Renegar
	\cite[Chapter 3]{Ren:01}.

\paragraph{Reformulations}

In the sequel we represent the operator $\A$ via symmetric matrices $A_1, \dots, A_m$ as 
\begin{equation} \label{eqn-repr-A}
\A X \, = \, (A_1 \bullet X, \dots, A_m \bullet X)^\top. \footnote{Then the adjoint $\A^*$ is given as $\A^* y = \sum_{i=1}^m y_i A_i$ for $y \in \rad{m}.$} 
\end{equation}
The following definition will be used  throughout the paper.

\begin{definition} \label{definition-reform} 
We say that we   reformulate \eqref{p} if we apply to it some of the following operations (in any order): 
\begin{enumerate}
    \item \label{exch} Exchange $(A_i, b_i)$ and $(A_j, b_j), \,$ where $i$ and $j$ are distinct indices in $\{1, \dots, m \}.$ 
    \item \label{trans} Replace $(A_i, b_i)$ by $\lambda (A_i, b_i) + \mu (A_j, b_j), \,$ where $\lambda$ and $\mu$ are reals, $\lambda \neq 0,$ 
    and $i$ and $j$ are distinct indices in $\{1, \dots, m \}.$ % \neq 0.$
    \item \label{rotate} Replace all $A_i$ by $T^{\top} A_iT, \, $ where $T$ is a suitably chosen invertible matrix. 
\end{enumerate}
We also say that by reformulating (\ref{p}) we obtain a  reformulation; and that we reformulate the map $\A: \sym{n} \rightarrow \rad{m}$ if we reformulate \eqref{p} with some $b \in \rad{m}.$ 
\end{definition}

	We make two observations that will be useful later. 
	First, reformulating \eqref{p} preserves its status: it is feasible (infeasible, weakly infeasible) if and only if   has the same status after reformulating it. Second, in Definition \ref{definition-reform}  %Observe that 
operations \ref{exch} and \ref{trans} can be naturally viewed as  elementary row operations performed on the constraints of system $\A X = b.$ 
     
\paragraph{Semidefinite echelon form}

To motivate our next definition, suppose that a square matrix 
$M$ %of row vectors $m_1, \dots, m_k \in \rad{k}$ 
is in row echelon, i.e.,  upper triangular form 
$$
M \, =  \,  \begin{pmatrix}  * & * & \dots    & * \\
& *  & \dots   & * \\
&     & \ddots &   \\
&      &             & *
\end{pmatrix}
$$
where the empty cells are all zeroes. Assuming  the diagonal entries of $M$ are nonzero, this form serves two purposes. First,  it makes it clear that the columns of $M$ span the whole space; and second, it shows 
that the nullspace of $M$ contains only $0.$ 

Analogously, we define an echelon form of a sequence of symmetric matrices:
\begin{definition} \label{definition-regfr} We say that the sequence of symmetric $n \times n$ matrices $(M_1,\dots,M_k)$ is
in semidefinite echelon form with structure $\{P_1,\dots,P_{k}\}$ if  the following three conditions hold: i) the $P_i$ are disjoint subsets of $N, \,$ ii) for $i=1, \dots, k$  
\begin{equation}
\begin{array}{rcl} 
M_i(P_i) & \text{ is }   &  \text{diagonal with  positive  diagonal entries, and}   \\
M_i(P_1 \cup \dots \cup P_{i-1}, N) 	 & \text{ is } & \text{arbitrary},
\end{array}
\end{equation}
and iii) the remaining elements of all $M_i$ are zero.  (Note that by symmetry 
$M_i( N, P_1 \cup \dots \cup P_{i-1}) = M_i(P_1 \cup \dots \cup P_{i-1}, N)^\top.$) 

Thus, 
for a suitable permutation matrix $T$ the $M_i$ look like
\vspace{-0.4cm}
%An equivalent definition is that for a suitable permutation matrix $T$ the $T^\top M_i T$ look like 
\begin{figure}[H] 
	\begin{center} 
		\begin{tikzpicture}[scale=0.5, every node/.style={scale=0.6}]
    
    % create a helper function to draw a matrix
    % where each block is 1 unit wide.
    %
    % #1 - south west x coord of the matrix
    % #2 - south west y coord of the matrix
    % #3 - number of rows
    \newcommand{\tixmat}[3]{
    % draw the horizontal lines of the matrix
    \foreach \x in {2,...,#3} \draw[] (\x-1+#1,#2) to (\x-1+#1,#2+#3);
    % draw the vertical lines of the matrix
    \foreach \y in {2,...,#3} \draw[] (#1,\y-1+#2) to (#1+#3,\y-1+#2);
    % draw the rectangle enclosing the matrix
    \draw[] (#1,#2) rectangle (#1+#3,#2+#3);
    }
    
    % draw our 4 empty matrix boxes with colors
    \fill[red!60] (0,3) rectangle (1,4);
    
    \fill[cyan!80] (1+5,3) rectangle (4+5,4);
    \fill[cyan!80] (0+5,0) rectangle (1+5,4);
    \fill[red!60] (1+5,2) rectangle (2+5,3);
    
    \fill[cyan!80] (2+10,2) rectangle (4+10,4);
    \fill[cyan!80] (0+10,0) rectangle (2+10,4);
    \fill[red!60] (2+10,1) rectangle (3+10,2);
    
    \tixmat{0}{0}{4};
    \tixmat{5}{0}{4};
    \tixmat{10}{0}{4};
    
    \node[] at (12,-0.75) {\LARGE $T^\top M_3T$};
    \node[] at (7,-0.75) {\LARGE $T^\top M_2T$};
    \node[] at (2,-0.75) {\LARGE $T^\top M_1T$};
    
    \draw[decoration={brace,raise=1.25pt},decorate] (0.05,4) -- node[above=6.75pt] {\Large $P_1^\prime$} (0.95,4);
    
    \draw[decoration={brace,raise=1.25pt},decorate] (5+1.05,4) -- node[above=6.75pt] {\Large $P_2^\prime$} (5+1.95,4);
    
    \draw[decoration={brace,raise=1.25pt},decorate] (10+2.05,4) -- node[above=6.75pt] {\Large $P_3^\prime$} (10+2.95,4);
    
    \node[] at (0.5,3.5) {\LARGE $+$};
    
    \node[] at (5+1.5,2.5) {\LARGE $+$};
    \node[] at (5+0.5,3.5) {\LARGE $\times$};
    \node[] at (5+1.5,3.5) {\LARGE $\times$};
    \node[] at (5+2.5,3.5) {\LARGE $\times$};
    \node[] at (5+3.5,3.5) {\LARGE $\times$};
    \node[] at (5+0.5,2.5) {\LARGE $\times$};
    \node[] at (5+0.5,1.5) {\LARGE $\times$};
    \node[] at (5+0.5,0.5) {\LARGE $\times$};
    
    \node[] at (10+2.5,1.5) {\LARGE $+$};
    \node[] at (10+0.5,3.5) {\LARGE $\times$};
    \node[] at (10+1.5,3.5) {\LARGE $\times$};
    \node[] at (10+2.5,3.5) {\LARGE $\times$};
    \node[] at (10+3.5,3.5) {\LARGE $\times$};
    \node[] at (10+0.5,2.5) {\LARGE $\times$};
    \node[] at (10+0.5,1.5) {\LARGE $\times$};
    \node[] at (10+0.5,0.5) {\LARGE $\times$};
    \node[] at (10+1.5,0.5) {\LARGE $\times$};
    \node[] at (10+1.5,1.5) {\LARGE $\times$};
    \node[] at (10+1.5,2.5) {\LARGE $\times$};
    \node[] at (10+2.5,2.5) {\LARGE $\times$};
    \node[] at (10+3.5,2.5) {\LARGE $\times$};
    
    \node[] at (15.5,2) {\LARGE $\cdot$};
    \node[] at (16,2) {\LARGE $\cdot$};
    \node[] at (16.5,2) {\LARGE $\cdot$};

    \end{tikzpicture}
	\end{center}
\end{figure}
\vspace{-0.7cm}
where the columns of $M_i$ with indices in $P_i$ were permuted into 
columns of $T^{\top} M_i T$ with indices in $P_i^\prime$.
\end{definition}

Here the $+$ red blocks are positive definite and diagonal, the $\times$ blue blocks may have arbitrary elements,
and the white blocks are zero.  

To highlight the analogy with the row echelon form, suppose $(M_1, \dots, M_k)$ is in semidefinite echelon form with 
structure $\{P_1, \dots, P_k\}.$ In Section \ref{section-mainresults} we will see that
this special form serves two purposes. First, suppose that $X \succeq 0$ satisfies $M_i \bullet X = 0$ for all $i.$ Then by an elementary argument the rows (and columns) of $X$ indexed by $P_1 \cup \dots \cup P_k$ are zero. Second, suppose $X$ is a symmetric matrix whose $N \setminus (P_1 \cup \dots \cup P_k)$ diagonal block is positive definite. Then by another elementary argument $\sum_{i=1}^k \lambda_i M_i + X$ is positive definite for suitable $\lambda_1, \dots, \lambda_k$ reals. 

\begin{remark}\normalfont
A sequence  $(M_1, \dots, M_k)$ in semidefinite echelon form is a type of {\em facial reduction sequence} \cite{BorWolk:81,pataki2000simple,Pataki:13}. Precisely, $(M_1, \dots, M_k)$ certifies that any $X \succeq 0$ such that 
$M_i \bullet X = 0$ for all $i$ belongs to a {\em face} of $\psdn, \, $ namely the set 
of of psd matrices with certain rows and columns equal to zero.
  
Let us consider a special case when the structure of $(M_1, \dots, M_k)$ is $\{P_1, \dots, P_k \}, \, $ 
$P_1$ contains the first $|P_1|$ indices of $N, \, $ $P_2$ contains the next $|P_2|$ indices, and so on, and the positive definite blocks in all $M_i$ are identities. These sequences were 
defined in \cite{liu2017exact}, and baptized  as 
{\em regularized facial reduction sequences}.   

\end{remark}

\section{The main result, and the easy direction} 
\label{section-mainresults} 

 The main result of the paper is the following.
 \begin{theorem}\label{thm-weak-ref} The problem 
 	\eref{p} is weakly infeasible if and only if it has a reformulation
 	\begin{equation}\label{equation-p-weak}\tag{$\mathrm{P}_{\rm weak}$}
 	\begin{array}{rcl} 
 	\A^\prime X & = & b^\prime \\  
 	X & \succeq & 0
 	\end{array}
 	\end{equation}
 	with the following  properties:
 	\begin{enumerate}
 		\item  \label{thm-weak-1} $(A_1^{\prime},\dots,A_{k+1}^\prime)$ is in semidefinite echelon form 
 		and $(b_1^\prime,\dots,b_k^\prime,b_{k+1}^\prime) = (0, \dots, 0, -1)$   for some $k \geq 1;$
 		\item  \label{thm-weak-2} there is 
 		$(X_1,\dots,X_{\ell+1})$ in semidefinite echelon form such that $\ell \geq 1$ and 
 		 \begin{equation}\label{eqn-Aiprime-Xj}
 		\begin{array}{rcll}
 		\mathcal{A}^{\prime}X_i &=& 0                          & \text{for} \,\, i=1,\dots,\ell\\
 		\mathcal{A}^{\prime}X_{\ell+1} &=& b^{\prime}.
 		\end{array}
 		\end{equation}
 		\end{enumerate}
 		 		\qed 
 \end{theorem}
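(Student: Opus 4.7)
My plan is to prove the two directions of the biconditional separately. The easy direction -- that the reformulation implies weak infeasibility -- I would handle by direct computation from the echelon structure, while the hard converse will require facial reduction and is the main obstacle.

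For the easy direction I would first establish infeasibility of the reformulation by a peeling induction on the blocks $P_1, \dots, P_k$. Given any $X \succeq 0$ with $\A' X = b'$, the matrix $A_1'$ is supported only on the diagonal of $P_1$ with strictly positive entries, so $A_1' \bullet X = 0$ together with $X \succeq 0$ forces $X(P_1) = 0$, and hence $X(P_1, N) = 0$ by the standard fact that a zero diagonal entry in a PSD matrix kills its full row and column. Inductively, once $X$ vanishes on rows and columns indexed by $P_1 \cup \cdots \cup P_{i-1}$, the ``arbitrary'' block of $A_i'$ drops out of $A_i' \bullet X$, so the constraint reduces to $A_i'(P_i) \bullet X(P_i) = 0$ and the same argument propagates the vanishing to $P_i$. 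Applied with $i = k+1$, this gives $A_{k+1}' \bullet X \geq 0$, contradicting $b_{k+1}' = -1$; so the reformulation is infeasible, and since reformulations preserve feasibility status so is \eqref{p}.

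For the approximate-solution side of the easy direction, let $Q = Q_1 \cup \cdots \cup Q_{\ell+1}$ and consider
\[
Y_t \;=\; \sum_{i=1}^{\ell+1} \lambda_i(t)\, X_i \;+\; \varepsilon_t\, I_{N \setminus Q},
\]
with $\lambda_{\ell+1}(t) \equiv 1$, the weights $\lambda_i(t)$ for $i \leq \ell$ growing with $\lambda_i(t) \gg \lambda_{i+1}(t)^2$, and $\varepsilon_t \to 0$. Since $\A' X_i = 0$ for $i \leq \ell$ and $\A' X_{\ell+1} = b'$, we get $\A' Y_t = b' + \varepsilon_t \A'(I_{N \setminus Q})$, so it suffices to make $Y_t \succeq 0$ with $\varepsilon_t \to 0$. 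An iterated Schur-complement argument, peeling off $Q_1$, then $Q_2$, and so on, shows $Y_t(Q) \succ 0$: each dominant diagonal block $\lambda_j X_j(Q_j) \succ 0$ absorbs the squared off-diagonal terms, which carry the smaller weights $\lambda_{j+1}$. One more Schur complement against the $\varepsilon_t I$ block on $N \setminus Q$ requires $\varepsilon_t$ to dominate $\|Y_t(N \setminus Q, Q)\, Y_t(Q)^{-1}\, Y_t(Q, N \setminus Q)\|$, which tends to zero under the same growth condition. Combined with the infeasibility established above, this yields weak infeasibility.

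The hard direction is where I expect most of the work. My plan is to construct the reformulation iteratively by a facial-reduction scheme adapted to weakly infeasible SDPs, using parts~(1) and~(2) of Theorem~5 of~\cite{liu2017exact} as the key primitive. At each step, weak infeasibility supplies a dual direction $y^{(i)}$ with $\A^* y^{(i)} \succeq 0$ and a suitable sign condition on $b^{\top} y^{(i)}$; its support determines the next layer $P_i$, after which the problem is restricted to the face where those rows and columns are forced to zero and the procedure recurses. Dually, the generator sequence $(X_j)$ should arise from the ``limiting directions'' of an approximating sequence $\{X_t\} \subseteq \psd{n}$ with $\A X_t \to b$; the $Q_j$'s are the supports where these directions blow up in successive facial quotients. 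The main obstacle I foresee is coordinating the primal and dual constructions in a single reformulation: one congruence $T$ must simultaneously put $(A_i')$ into echelon form on the $P_i$'s and $(X_j)$ into echelon form on the $Q_j$'s. I would attempt an interleaved induction in which $T$ acts trivially on coordinates already exposed at earlier stages, so that the two combinatorial structures $\{P_i\}$ and $\{Q_j\}$ are built in a compatible basis; references~\cite{liu2017exact, lourencco2016structural} would be heavily consulted here.
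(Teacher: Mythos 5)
Your easy direction is sound and matches the paper's argument in spirit. The peeling induction for infeasibility is identical. For non--strong infeasibility the paper builds the near-feasible psd matrix iteratively (add $\delta I$ on $Q_{\ell+2}$, then add $\gamma_\ell X_\ell$, then $\gamma_{\ell-1}X_{\ell-1}$, etc., choosing each $\gamma_i$ large enough to keep positive definiteness), which is equivalent to your single linear combination $Y_t = \sum_i \lambda_i(t) X_i + \varepsilon_t I_{N\setminus Q}$ provided the growth condition is tuned correctly. One small thing to watch in your Schur-complement phrasing: the blocks $Y_t(N\setminus Q, Q_j)$ grow like $\lambda_{j+1}$ (they come from the later $X_i$'s whose ``arbitrary'' blocks touch $Q_j$), not like $O(1)$, so the requirement is really $\lambda_{j+1}^2/\lambda_j \to 0$ for every $j$, which your condition $\lambda_i \gg \lambda_{i+1}^2$ does guarantee -- but the bound should be stated in terms of the $\lambda$'s, not in terms of a fixed $\varepsilon_t$ dominating a fixed-size quantity.

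The hard direction is where your proposal has a genuine gap, and you seem partly aware of it. You correctly identify the ingredients (facial reduction, Theorem 5 of [liu2017exact], some congruence $T$ that must simultaneously normalize the $P_i$ and $Q_j$ structures), but the ``interleaved induction in which $T$ acts trivially on coordinates already exposed'' is not a workable plan as stated, and it is not what the paper does. The difficulty is that the dual certificates that expose $P_i$ and the primal limiting directions that expose $Q_j$ are coupled: changing coordinates to normalize one side scrambles the other, so you cannot build the two structures symmetrically one layer at a time. The paper's actual route is sequential and asymmetric: first apply Lemma \ref{thm-inf} to produce $(P_{\rm infeas})$ with the $A_i'$ already in semidefinite echelon form; separately apply Lemma \ref{thm-notstrong} to produce $(P_{\rm notstrong})$ with a genuine echelon sequence $(X_1,\dots,X_{\ell+1})$; then, since both are reformulations of \eqref{p}, transport the $X_j$'s into the $(P_{\rm infeas})$ coordinates via $X_s := V^{-1}X_s V^{-\top}$ for the congruence $V$ relating the two. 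At that point the $X_j$ satisfy the correct inner-product equations \eqref{eqn-Aiprime-Xj} and are still a facial reduction sequence (Lemma \ref{claim-V-fr}), but they are no longer in echelon form. The heart of the proof is then an iterative re-diagonalization (Step $j$, using Lemma \ref{lemma-X-rotate} and the block structure $R_i = P_i \setminus Q_{1:j}$) that restores the echelon form of $X_{j+1}$ \emph{using only congruences that provably preserve} the echelon form of the $A_i'$ and of $X_1,\dots,X_j$. This invariant-maintaining re-diagonalization -- not an interleaved build-up -- is the missing idea in your proposal.
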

 Here we understand that $\A^\prime$ is represented by 
symmetric matrices   $A_i^\prime$ as 
 \begin{equation} \label{eqn-repr-Aprime} 
 \A^\prime X \, = \, (A_1^\prime \bullet X, \dots, A_m^\prime \bullet X)^{\top}. 
 \end{equation}
 
	If \eqref{p} is weakly infeasible, then we can choose the reformulation \eqref{equation-p-weak} so the positive definite blocks in $A_1', \dots, A_k'$ and $X_1, \dots, X_\ell$ are all nonempty.
	Indeed, in the proof of the ``only if" direction we will  construct the 
	reformulation \eqref{equation-p-weak} and  $X_j$ sequence precisely in  this manner.

   \begin{example} \label{example-small-contd} (Example \ref{example-small} continued) 
 As a quick check, the problem \eqref{problem-small} needs only a minimal reformulation. To put it into the echelon form of \eqref{equation-p-weak},
 we set 
 \begin{equation} \label{eqn-A1primeA2prime} 
 A_1^\prime := A_1, \, A_2^\prime := - \frac{1}{2} A_2, 
 \end{equation}
  where $A_1$ and $A_2$ are given in 
 \eqref{eqn-A1A2}, and use $(X_1, X_2)$ from equation 
 \eqref{eqn-X1X2}. In this SDP we have $k = \ell =1.$ 
\end{example}

  It is useful to visualize Theorem \ref{thm-weak-ref} via  the matrix 
 of inner products of  the $A_i^\prime$ and $X_j$  in Figure \ref{figure-Ai-Xj-matrix}.
 \begin{figure}[H] 
 	\begin{center} 
 		\begin{tikzpicture}[scale=1.0, every node/.style={scale=1.0}]

    % draw our 4 empty matrix boxes with colors
    
    \begingroup
    \renewcommand*{\arraystretch}{1.5}
    \setlength\arraycolsep{5.5pt}
    
    \node[] at (0.45,0) 
    {
    $
    (A_i^\prime \bullet X_j)_{i=1, j=1}^{m, \ell+1} \, = \, \left( \begin{array}{ccc|c} 0 & \dots & 0    & 0 \\
 0 &    \ddots  & 0 & 0 \\
 0 & \dots       & 0 & 0  \\ \hline 
 0 & \dots       & 0 & -1   \\  \hline 
 0  & \dots      & 0 & b^\prime_{k+2}   \\
      & \ddots   &     &        \\
 0   & \dots     & 0 & b^\prime_m  \end{array}  \right) 
    $
    };
    
    \endgroup
    
    \draw[decoration={brace,raise=4.0pt},decorate] (0.35,2.3) -- node[above=7.5pt] {$\ell+1$} (3.05,2.3);
    
    \draw[decoration={brace,raise=4.0pt},decorate] (3.5,2.25) -- node[right=7.5pt] {$k+1$} (3.5,-0.25);
    
    \end{tikzpicture}
 	\end{center}
 	\caption{The matrix of  inner products of the $A_i^\prime$ and $X_j$ in Theorem \ref{thm-weak-ref}} 
 	\label{figure-Ai-Xj-matrix} 
 \end{figure}
The proof of the ``only if" direction of Theorem \ref{thm-weak-ref} is technical  and deferred to Section \ref{section-proof-thm1}. However, the proof of the  ``if" direction is elementary, and we provide it below.

\begin{proof}[Proof\;(of ``if'' in Theorem \ref{thm-weak-ref})]

It suffices to prove that \eqref{equation-p-weak}  is weakly infeasible. 
 To that end, we first prove it is  infeasible, so to obtain a contradiction 
 we  assume that $X$ is feasible in it. 
 We also assume that $(A_1^\prime, \dots, A_{k+1}^\prime)$ has structure $\{ P_1, \dots, P_{k+1}\}$
 (see Definition \ref{definition-regfr}).  
 
 Since $A_1^\prime \bullet X = 0, \,  $ a positively weighted linear combination of the diagonal elements of $X(P_1)$ is zero.
 Since $X \succeq 0, \, $  these elements are zero,  hence 
 the rows (and columns) of $X$ indexed by $P_1$ are zero.
 %and $X(P_1, N) =0.$ 

 Continuing,   $A_2^\prime \bullet X = 0, \dots, A_k^\prime \bullet X = 0$ implies 
    that the rows (and columns) of $X$ indexed by 
  $P_2 \cup \dots \cup P_k \, $ are zero. Hence $A_{k+1}^\prime \bullet X$ is a positively weighted linear combination of the diagonal elements of $X(P_{k+1}), $ so 
 $$
 A_{k+1}^\prime \bullet X \geq 0.
 $$
  This contradiction proves that \eqref{equation-p-weak} is indeed infeasible.

  This process is illustrated on Figure \ref{figure-infeasible}, where the submatrices marked by $\oplus$ are positive semidefinite. For convenience 
  we assume in this figure that the columns of all matrices indexed by $P_1$ come first; the columns indexed by $P_2$ come next; etc.
      \begin{figure}[H]
  	\begin{center} 
  		\begin{tikzpicture}[scale=0.5, every node/.style={scale=0.6}]

\fill[red!60] (0,0) rectangle (4,4);
\draw (0,0) rectangle (4,4);
\node[] at (2,2) {\huge $\oplus$};

\draw[decoration={brace,raise=1.0pt,mirror},decorate] (0,-0.25) -- node[below=12pt] {\LARGE $X$} (4,-0.25);

\node[] at (6,2.75) {\LARGE $A_1^\prime\bullet X=0$};
\node[] at (6,2) {\Huge $\longrightarrow$};

\fill[red!60] (9,0) rectangle (12,3);
\draw[] (9,0) to (9,4);
\draw[] (8,3) to (12,3);
\draw (8,0) rectangle (12,4);
\node[] at (10.5,1.5) {\huge $\oplus$};
\node[] at (8.5,1.5) {\LARGE $0$};
\node[] at (8.5,3.5) {\LARGE $0$};
\node[] at (10.5,3.5) {\LARGE $0$};
\draw[decoration={brace,raise=1.25pt},decorate] (8.05,4) -- node[above=6.75pt] {\Large $P_1$} (8.95,4);
	 			
\draw[decoration={brace,raise=1.0pt,mirror},decorate] (8,-0.25) -- node[below=12pt] {\LARGE $X$} (12,-0.25);
	 			
\node[] at (14,2.75) {\LARGE $A_2^\prime\bullet X=0$};
\node[] at (14,2) {\Huge $\longrightarrow$};

\fill[red!60] (18,0) rectangle (20,2);
\draw[] (17,0) to (17,4);
\draw[] (16,3) to (20,3);
\draw[] (18,0) to (18,4);
\draw[] (16,2) to (20,2);
\draw (16,0) rectangle (20,4);
\node[] at (19,1) {\huge $\oplus$};
\node[] at (16.5,1) {\LARGE $0$};
\node[] at (17.5,1) {\LARGE $0$};
\node[] at (16.5,2.5) {\LARGE $0$};
\node[] at (17.5,2.5) {\LARGE $0$};
\node[] at (16.5,3.5) {\LARGE $0$};
\node[] at (17.5,3.5) {\LARGE $0$};
\node[] at (19,3.5) {\LARGE $0$};
\node[] at (19,2.5) {\LARGE $0$};
\draw[decoration={brace,raise=1.25pt},decorate] (16.05,4) -- node[above=6.75pt] {\Large $P_1$} (16.95,4);
\draw[decoration={brace,raise=1.25pt},decorate] (17.05,4) -- node[above=6.75pt] {\Large $P_2$} (17.95,4);
	 			
\draw[decoration={brace,raise=1.0pt,mirror},decorate] (16,-0.25) -- node[below=12pt] {\LARGE $X$} (20,-0.25);

\node[] at (22,2.75) {\LARGE $\dots$};
\node[] at (22,2) {\Huge $\longrightarrow$};

\end{tikzpicture}
  	\end{center}
  	\caption{Proving that \eqref{equation-p-weak} is infeasible} 
  	\label{figure-infeasible} 
  \end{figure} 
     Next we prove that \eqref{equation-p-weak} is not strongly infeasible. For that, let 
$$
H^\prime = \{ \, X\in \symn   ~ : ~  \A^\prime X = b^\prime \  \},
$$
and  fix $\epsilon > 0. \, $  

We will construct a psd matrix which is $\epsilon$ close to $H^\prime.$ 
	Suppose  the structure of 
	$(X_1, \dots, X_{\ell+1})$ is $\{Q_1, \dots, Q_{\ell+1} \}$ and for brevity, let $Q_{\ell + 2} = N \setminus (Q_1 \cup \dots \cup Q_{\ell+1}).$ 
	
	First we  define $X_\delta  \in \symn$ so that $X_\delta(Q_{\ell+2}) = \delta I$ 
	and the other elements  of $X_\delta$ are zero. Here 
	$\delta >0$ is chosen so  the norm of 
	$X_\delta$ is at most $\epsilon.$ See the leftmost picture in Figure \ref{figure-not-strong}. 
	
	Second, we define $X_{\ell+1}^\prime := X_{\ell+1} + X_\delta.$ Then the 
	$(Q_{\ell+1} \cup Q_{\ell+2})$ diagonal block of $X_{\ell+1}^\prime$  is 
	positive definite and $X_{\ell+1}^\prime$ is within $\epsilon$ distance of $H^\prime$ (since $X_{\ell+1} \in H^\prime$). See the middle picture in Figure \ref{figure-not-strong}. 
	
	Next we let 
	$X_\ell^\prime := \gamma_\ell X_\ell + X_{\ell+1}^\prime$ 
	where $\gamma_\ell$ is a positive real. 
	The definition of positive definiteness ($G \succ 0$ 
	if $x^{\top} G x  > 0$ for all nonzero $x$) implies that 
	$$X_\ell^\prime(Q_\ell \cup Q_{\ell+1} \cup Q_{\ell+2}) \succ 0$$ 
	if $\gamma_\ell$ is sufficiently large. Further,  $X_\ell^\prime$ is still within $\epsilon$ distance of  $H^\prime$
	(since $\A' X_{\ell} = 0$ ). 
	We refer to the rightmost picture in Figure \ref{figure-not-strong}. 

\begin{figure}[H]
	\begin{center} 
		\begin{tikzpicture}[scale=0.5, every node/.style={scale=0.6}]
    
    % create a helper function to draw a matrix
    % where each block is 1 unit wide.
    %
    % #1 - south west x coord of the matrix
    % #2 - south west y coord of the matrix
    % #3 - number of rows
    \newcommand{\tixmat}[3]{
    % draw the horizontal lines of the matrix
    \foreach \x in {2,...,#3} \draw[] (\x-1+#1,#2) to (\x-1+#1,#2+#3);
    % draw the vertical lines of the matrix
    \foreach \y in {2,...,#3} \draw[] (#1,\y-1+#2) to (#1+#3,\y-1+#2);
    % draw the rectangle enclosing the matrix
    \draw[] (#1,#2) rectangle (#1+#3,#2+#3);
    }
    
    % draw our 4 empty matrix boxes with colors
    \fill[red!60] (3,0) rectangle (4,1);
    
    \tixmat{0}{0}{4};
    
    \draw[decoration={brace,raise=1.0pt,mirror},decorate] (3.05,0) -- node[below=7.5pt] {\LARGE $\succ0$} (3.95,0);
    
    \node[] at (3.5,0.5) {\LARGE $\delta I$};

    \node[] at (5.5,2) {\Huge $\longmapsto$};
    \node[] at (5.5,2.75) {\LARGE $+X_{\ell+1}$};

    \fill[red!60] (2+7,0) rectangle (4+7,2);
    
    \tixmat{7}{0}{4};
    
    \draw[decoration={brace,raise=1.0pt,mirror},decorate] (7+2.05,0) -- node[below=7.5pt] {\LARGE $\succ0$} (7+3.95,0);
    
    \node[] at (7+3.5,0.5) {\LARGE $\delta I$};
    \node[] at (7+2.5,1.5) {\LARGE $+$};
    \node[] at (7+0.5,3.5) {\LARGE $\times$};
    \node[] at (7+1.5,3.5) {\LARGE $\times$};
    \node[] at (7+2.5,3.5) {\LARGE $\times$};
    \node[] at (7+3.5,3.5) {\LARGE $\times$};
    \node[] at (7+0.5,2.5) {\LARGE $\times$};
    \node[] at (7+0.5,1.5) {\LARGE $\times$};
    \node[] at (7+0.5,0.5) {\LARGE $\times$};
    \node[] at (7+1.5,0.5) {\LARGE $\times$};
    \node[] at (7+1.5,1.5) {\LARGE $\times$};
    \node[] at (7+1.5,2.5) {\LARGE $\times$};
    \node[] at (7+2.5,2.5) {\LARGE $\times$};
    \node[] at (7+3.5,2.5) {\LARGE $\times$};

    \node[] at (12.5,2) {\Huge $\longmapsto$};
    \node[] at (12.5,2.75) {\LARGE $+\gamma_{\ell} X_{\ell}$};
    \node[] at (12.5,1.25) {\LARGE $\gamma_{\ell}\gg0$};

    \fill[red!60] (1+14,0) rectangle (4+14,3);
    
    \tixmat{14}{0}{4};
    
    \draw[decoration={brace,raise=1.0pt,mirror},decorate] (14+1.05,0) -- node[below=7.5pt] {\LARGE $\succ0$} (14+3.95,0);
    
    \node[] at (14+1.5,2.5) {\LARGE $+$};
    \node[] at (14+3.5,0.5) {\LARGE $\delta I$};
    \node[] at (14+2.5,1.5) {\LARGE $+$};
    \node[] at (14+0.5,3.5) {\LARGE $\times$};
    \node[] at (14+1.5,3.5) {\LARGE $\times$};
    \node[] at (14+2.5,3.5) {\LARGE $\times$};
    \node[] at (14+3.5,3.5) {\LARGE $\times$};
    \node[] at (14+0.5,2.5) {\LARGE $\times$};
    \node[] at (14+0.5,1.5) {\LARGE $\times$};
    \node[] at (14+0.5,0.5) {\LARGE $\times$};
    \node[] at (14+1.5,0.5) {\LARGE $\times$};
    \node[] at (14+1.5,1.5) {\LARGE $\times$};
    \node[] at (14+2.5,2.5) {\LARGE $\times$};
    \node[] at (14+3.5,2.5) {\LARGE $\times$};

    \node[] at (19.75,2) {\Huge $\longmapsto$};
    \node[] at (19.75,2.75) {\LARGE $+\gamma_{\ell-1} X_{\ell-1}$};
    \node[] at (19.75,1.25) {\LARGE $\gamma_{\ell-1}\gg0$};
    
    \node[] at (20+2.25,2) {\LARGE $\dots$};

    \draw[decoration={brace,raise=1.0pt,mirror},decorate] (0+0.05,-1.25) -- node[below=12pt] {\LARGE $X_\delta$} (4,-1.25);
    
    \draw[decoration={brace,raise=1.0pt,mirror},decorate] (7+0.05,-1.25) -- node[below=12pt] {\LARGE $X_{\ell+1}^\prime$} (11,-1.25);
    
    \draw[decoration={brace,raise=1.0pt,mirror},decorate] (14+0.05,-1.25) -- node[below=12pt] {\LARGE $X_\ell^\prime$} (18,-1.25);
    
    \draw[decoration={brace,raise=1.25pt},decorate] (3.05,4) -- node[above=6.75pt] {\Large $Q_{\ell+2}$} (3.95,4);
    
    \draw[decoration={brace,raise=1.25pt},decorate] (9.05,4) -- node[above=6.75pt] {\Large $Q_{\ell+1}$} (9.95,4);
    
    \draw[decoration={brace,raise=1.25pt},decorate] (15.05,4) -- node[above=6.75pt] {\Large $Q_\ell$} (15.95,4);

    \end{tikzpicture}
	\end{center}
	\caption{Proving that \eqref{equation-p-weak} is not strongly infeasible} 
	\label{figure-not-strong} 
\end{figure} 
Continuing  in this fashion we add $\gamma_{\ell-1} X_{\ell-1}$ to $X_\ell^\prime$ for some large $\gamma_{\ell-1}$ and so on. Eventually we obtain a positive definite matrix, within $\epsilon$ distance of $H^\prime, \, $ and conclude that 
\eqref{equation-p-weak} is not strongly infeasible. The proof is complete.

\end{proof}

\begin{remark} \label{remark-HT} \normalfont
	Suppose \eqref{p} is weakly infeasible.
	Based on Theorem \ref{thm-weak-ref} we can prove this to a ``third party" 
	by the following data:
	\begin{enumerate}
		%	\begin{itemize}
		\item The original problem \eqref{p} and the reformulation \eqref{equation-p-weak};
		\item The sequence of matrices $(X_1, \dots, X_{\ell+1});$ 
		\item \label{item-operations} The operations needed to reformulate \eqref{p}  into \eqref{equation-p-weak}. These can be encoded 
		in a very compact manner, just by two matrices: 
		the elementary row operations by an $m \times m$ matrix $G = (g_{ij})$ % \in \rad{m \times m}$ 
		and the congruence transformations by an $n \times n$ matrix $T.$ Then the equations
		\begin{equation} \label{eqn-Aip-bip} 
		\begin{array}{rcl} 
		A_i^\prime & = &  T^{\top} \bigl( \sum_{j=1}^m g_{ij} A_j  \bigr) T \quad \text{for} \quad i=1, \dots, m \\
		b^\prime  &  = &  Gb
		\end{array} 
		\end{equation}
		hold. 
		%\end{itemize}
	\end{enumerate}
	So, to verify that \eqref{p} is weakly infeasible, we check that \eqref{equation-p-weak} and
	$(X_1, \dots, X_{\ell+1})$ are in the required form,  and equations \eqref{eqn-Aiprime-Xj} and 
	\eqref{eqn-Aip-bip} hold. All  these  
	computations must be done  over real numbers and in  exact arithmetic.
		This discussion implies that the problem ``is \eqref{p} weakly infeasible?" is in 
	$\np$   
	in the real number model of computing \footnote{It is also in $\conp, $ since if \eqref{p} is not weakly infeasible, we can verify this by exhibiting either a feasible solution of \eqref{p}, or a feasible solution of \eqref{p-alt}. }.
	
	This result already follows from previous works  \cite{Ramana:97, KlepSchw:12}. Precisely, these papers  described  certificates  to verify  infeasibility of any infeasible SDP,  regardless of whether it is strongly or  weakly infeasible. For example, Ramana's infeasibility certificate of an SDP is a semidefinite system which is feasible exactly when the SDP in question is infeasible.  Note that  \eqref{equation-p-infeas} in Lemma \ref{thm-inf} also certifies infeasibility of \eqref{p}.
	Thus, using any of these certificates, we obtain a certificate that \eqref{p} is {\em weakly} infeasible, if we  verify that \eqref{p} and its alternative system \eqref{p-alt} are  {\em both} infeasible. 
	
	On the other hand, our  echelon form \eqref{equation-p-weak} does more than just verify weak infeasibility. It makes weak infeasibility evident to see; and we can use it to conveniently construct any weakly infeasible SDP,  a feature that previously known certificates do not have.   
	
	Note that the following related question: ``Can we decide the feasibility status (feasibility, or weak/strong  infeasibility)  of \eqref{p} in polynomial time?" 
	is open   in the real number model, and in the Turing model as well.

\end{remark}

\begin{example} \label{example-large} 
 The SDP in the form \eqref{p} with data 
%\vspace{-0.5cm}
\begin{equation} \label{eqn-weak-large} 
\begin{array}{ccccccccccccccccccc}
A_1 & = & \begin{pmatrix}
	8 & -1 & -9 & -2 \\
	-1 & -26 & 3 & 39 \\
	-9 & 3 & 10 & 3 \\
	-2 & 39 & 3 & -16
\end{pmatrix}, \, 
A_2 & = & \begin{pmatrix}
	5 & -3 & -6 & -2 \\
	-3 & -6 & 5 & 21 \\
	-6 & 5 & 7 & 2 \\
	-2 & 21 & 2 & -11
\end{pmatrix} \\ \\
A_3 & = & \begin{pmatrix}
	-6 & -3 & 7 & 4 \\
	-3 & 34 & 1 & -43 \\
	7 & 1 & -8 & -5 \\
	4 & -43 & -5 & 18
\end{pmatrix}, \,
A_4 & = & \begin{pmatrix}
	5 & 4 & -9 & -6 \\
	4 & -28 & 6 & 48 \\
	-9 & 6 & 13 & 5 \\
	-6 & 48 & 5 & -21
\end{pmatrix} \\ \\
b  & = & \!\!\!\!\!\!\!\!\!\!   (-44, -22, 44, -68)^{\top} 
\end{array}
\end{equation} 
is weakly infeasible, but from this form this would be very difficult to tell.

However, once we reformulate \eqref{eqn-weak-large}  by the formulas in 
\eqref{eqn-Aip-bip} and the $G$ and $T$ matrices 
$$
G = \dfrac{1}{2} \begin{pmatrix}
1 & 0 & 1 & 0 \\
0 & 2 & 1 & 0 \\
1 & 1 & 3 & 1 \\
0 & 0 & 1 & 1
\end{pmatrix}, \quad 
T=\begin{pmatrix}
-1 & 1 & 1 & 1 \\
0 & 1 & 0 & 0 \\
0 & -1 & 0 & 1 \\
0 & 0 & -1 & 0
\end{pmatrix}, 
$$
it is brought into the form \eqref{equation-p-weak} with the $A_i^\prime$ and $X_j$ shown on Figure 
\ref{figure-large-weak}. 
\begin{figure}[H]
	\begin{center} 
		\begin{tikzpicture}[scale=0.5, every node/.style={scale=0.6}]
    
    % create a helper function to draw a matrix
    % where each block is 1 unit wide.
    %
    % #1 - south west x coord of the matrix
    % #2 - south west y coord of the matrix
    % #3 - number of rows
    \newcommand{\tixmat}[3]{
    % draw the horizontal lines of the matrix
    \foreach \x in {2,...,#3} \draw[] (\x-1+#1,#2) to (\x-1+#1,#2+#3);
    % draw the vertical lines of the matrix
    \foreach \y in {2,...,#3} \draw[] (#1,\y-1+#2) to (#1+#3,\y-1+#2);
    % draw the rectangle enclosing the matrix
    \draw[] (#1,#2) rectangle (#1+#3,#2+#3);
    }
    
    % draw our 4 empty matrix boxes with colors
    \fill[red!60] (0,3) rectangle (1,4);
    
    \fill[cyan!80] (5+0,0) rectangle (5+1,4);
    \fill[cyan!80] (5+1,3) rectangle (5+4,4);
    \fill[red!60] (5+1,2) rectangle (5+2,3);
    
    \fill[cyan!80] (10+0,0) rectangle (10+2,4);
    \fill[cyan!80] (10+2,2) rectangle (10+4,4);
    \fill[red!60] (10+2,1) rectangle (10+3,2);
    
    \fill[red!60] (3,-6) rectangle (4,-6+1);
    
    \fill[cyan!80] (5+0,-6+0) rectangle (5+4,-6+1);
    \fill[cyan!80] (5+3,-6+1) rectangle (5+4,-6+4);
    \fill[red!60] (5+1,-6+2) rectangle (5+2,-6+3);
    
    \fill[cyan!80] (5*2+0,-6+0) rectangle (5*2+4,-6+1);
    \fill[cyan!80] (5*2+0,-6+2) rectangle (5*2+4,-6+3);
    \fill[cyan!80] (5*2+1,-6+0) rectangle (5*2+2,-6+4);
    \fill[cyan!80] (5*2+3,-6+0) rectangle (5*2+4,-6+4);
    \fill[red!60] (5*2+2,-6+1) rectangle (5*2+3,-6+2);
    
    \tixmat{0}{0}{4};
    \tixmat{5}{0}{4};
    \tixmat{10}{0}{4};
    
    \tixmat{0}{-6}{4};
    \tixmat{5}{-6}{4};
    \tixmat{10}{-6}{4};
    
    \node[] at (2+5*2,-0.75) {\LARGE $A_3^\prime$};
    \node[] at (2+5,-0.75) {\LARGE $A_2^\prime$};
    \node[] at (2,-0.75) {\LARGE $A_1^\prime$};
    
    \node[] at (2+5*2,-0.75-6) {\LARGE $X_3$};
    \node[] at (2+5,-0.75-6) {\LARGE $X_2$};
    \node[] at (2,-0.75-6) {\LARGE $X_1$};
    
    \draw[decoration={brace,raise=1.25pt},decorate] (0,3.05) -- node[left=6.75pt] {\Large $P_1$} (0,3.95);
    \draw[decoration={brace,raise=1.25pt},decorate] (0,2.05) -- node[left=6.75pt] {\Large $P_2$} (0,2.95);
    \draw[decoration={brace,raise=1.25pt},decorate] (0,1.05) -- node[left=6.75pt] {\Large $P_3$} (0,1.95);
    
    \draw[decoration={brace,raise=1.25pt},decorate] (0,0.05-6) -- node[left=6.75pt] {\Large $Q_1$} (0,0.95-6);
    \draw[decoration={brace,raise=1.25pt},decorate] (0,2.05-6) -- node[left=6.75pt] {\Large $Q_2$} (0,2.95-6);
    \draw[decoration={brace,raise=1.25pt},decorate] (0,1.05-6) -- node[left=6.75pt] {\Large $Q_3$} (0,1.95-6);
    
    \node[] at (0.5,3.5) {\LARGE $1$};
    
    \node[] at (5+2.5,3.5) {\LARGE $-2$};
    \node[] at (5+3.5,3.5) {\Large $1/2$};
    \node[] at (5+0.5,1.5) {\LARGE $-2$};
    \node[] at (5+0.5,0.5) {\Large $1/2$};
    \node[] at (5+1.5,2.5) {\LARGE $1$};
    \node[] at (5+0.5,2.5) {\LARGE $0$};
    \node[] at (5+0.5,3.5) {\LARGE $2$};
    \node[] at (5+1.5,3.5) {\LARGE $0$};
    
    \node[] at (10+3.5,2.5) {\LARGE $1$};
    \node[] at (10+1.5,0.5) {\LARGE $1$};
    \node[] at (10+2.5,1.5) {\LARGE $1$};
    \node[] at (10+0.5,0.5) {\Large $3/2$};
    \node[] at (10+0.5,1.5) {\LARGE $1$};
    \node[] at (10+0.5,2.5) {\LARGE $3$};
    \node[] at (10+0.5,3.5) {\LARGE $0$};
    \node[] at (10+1.5,3.5) {\LARGE $3$};
    \node[] at (10+2.5,3.5) {\LARGE $1$};
    \node[] at (10+3.5,3.5) {\Large $3/2$};
    \node[] at (10+2.5,2.5) {\LARGE $4$};
    \node[] at (10+1.5,1.5) {\LARGE $4$};
    \node[] at (10+1.5,2.5) {\LARGE $1$};
    
    \node[] at (3.5,0.5-6) {\LARGE $1$};
    
    \node[] at (1+5.5,2+0.5-6) {\LARGE $1$};
    \node[] at (0+5.5,0+0.5-6) {\LARGE $-1$};
    \node[] at (1+5.5,0+0.5-6) {\LARGE $1$};
    \node[] at (2+5.5,0+0.5-6) {\LARGE $0$};
    \node[] at (3+5.5,0+0.5-6) {\LARGE $-4$};
    \node[] at (3+5.5,1+0.5-6) {\LARGE $0$};
    \node[] at (3+5.5,2+0.5-6) {\LARGE $1$};
    \node[] at (3+5.5,3+0.5-6) {\LARGE $-1$};
    
    \node[] at (2+10.5,1+0.5-6) {\LARGE $1$};
    \node[] at (0+10.5,0+0.5-6) {\LARGE $0$};
    \node[] at (1+10.5,0+0.5-6) {\LARGE $-5$};
    \node[] at (2+10.5,0+0.5-6) {\LARGE $-4$};
    \node[] at (3+10.5,0+0.5-6) {\LARGE $2$};
    \node[] at (3+10.5,1+0.5-6) {\LARGE $-4$};
    \node[] at (3+10.5,2+0.5-6) {\LARGE $-5$};
    \node[] at (3+10.5,3+0.5-6) {\LARGE $0$};
    \node[] at (2+10.5,2+0.5-6) {\LARGE $-2$};
    \node[] at (1+10.5,2+0.5-6) {\LARGE $0$};
    \node[] at (0+10.5,2+0.5-6) {\LARGE $4$};
    \node[] at (1+10.5,3+0.5-6) {\LARGE $4$};
    \node[] at (1+10.5,1+0.5-6) {\LARGE $-2$};
    
    \end{tikzpicture}.
	\end{center}
	\label{fig-A1A2A3} 
	\caption{The $A_i'$ and $X_j$ obtained from reformulating   the SDP \eqref{eqn-weak-large}} 
	\label{figure-large-weak} 
\end{figure}
In the reformulation equations $A_1^\prime \bullet X = A_2^\prime \bullet X = 0, \, A_3^\prime \bullet X = -1$  certify  infeasibility and $(X_1, X_2,X_3)$ certify not strong infeasibility. The matrix $A_4^\prime$ is omitted 
(and is straightforward to compute from the formulas in \eqref{eqn-Aip-bip}). Note that now $k = \ell = 2.$ 
\end{example}

The reader may ask whether some SDPs are naturally in the echelon form of \eqref{equation-p-weak} 
{\em without even having to reformulate them.} We next present such an SDP from a 
prominent application of semidefinite programming, polynomial optimization. 

We first recall a definition. Given a multivariate polynomial 
$f = f(x_1, \dots, x_n), \, $ we say that $f$ is a sum of squares (SOS) if 
$f = \sum_{i=1}^t f_i^2$ for some $t$ positive integer and $f_i$ polynomials. 
An SOS  polynomial  is of course nonnegative. 
On the other hand, the first example of a nonnegative, but not SOS polynomial  was given 
by Motzkin in \cite{motzkin1967arithmetic} and there are many more nonnegative polynomials than SOS polynomials: see Blekherman  \cite{blekherman2006there}.

\begin{example} \label{example-motzkin} 
		
	Given the famous Motzkin polynomial
	\begin{equation}
	f(x,y) = 1 - 3 x^2 y^2 + x^2 y^4 + x^4 y^2, 
	\end{equation}
we can find its infimum over $\rad{2}$ by solving the problem 
	\begin{equation} \label{eqn-motzkin-exact} 
	\begin{array}{rlcl} 
	\sup & \lambda \\
	s.t.   & f(x,y)  - \lambda & \geq & 0.
	\end{array}
	\end{equation}
	 In the SOS relaxation of  \eqref{eqn-motzkin-exact} proposed in  \cite{lasserre2001global} and 
	 \cite{parrilo2003semidefinite}  we  solve the following problem instead: 
	 \begin{equation} \label{eqn-motzkin-sos} 
	\begin{array}{rlcl} 
	\sup & \lambda \\
	s.t.   & f - \lambda & \text{is} & \text{SOS}. 
	\end{array}
	 \end{equation}
	  In turn, we formulate \eqref{eqn-motzkin-sos} as an SDP as follows. 
	 We define a vector 
	 of monomials  \footnote{To strictly follow the SOS recipe we should also include in $z$ the monomials $x^3$ and $y^3.$ 
	 	We omitted these for simplicity, but it is straightforward to check that even if we do  include them, the resulting SDP 
	 	is still in the echelon form of \eqref{equation-p-weak}.} 
	 $$
	z = (x^2, y^2, x, y, xy, xy^2, x^2y, 1)^\top,
	$$
	then we know that $f - \lambda$ is SOS if and only if
$f - \lambda  = X \bullet zz^\top$ for some $X \succeq 0.$ 
	
\begin{figure}[htp]  
	\begin{center} 
		%           modify this value \/ for scale    or     modify   \/ for scale of numbers in the picture
\begin{tikzpicture}[scale=0.35*0.75, every node/.style={scale=0.35*1}]
    
    % create a helper function to draw a matrix
    % where each block is 1 unit wide.
    %
    % #1 - south west x coord of the matrix
    % #2 - south west y coord of the matrix
    % #3 - number of rows
    \newcommand{\tixmat}[3]{
    % draw the horizontal lines of the matrix
    \foreach \x in {2,...,#3} \draw[] (\x-1+#1,#2) to (\x-1+#1,#2+#3);
    % draw the vertical lines of the matrix
    \foreach \y in {2,...,#3} \draw[] (#1,\y-1+#2) to (#1+#3,\y-1+#2);
    % draw the rectangle enclosing the matrix
    \draw[] (#1,#2) rectangle (#1+#3,#2+#3);
    }
    
    \newcommand{\tixzeros}[3]{
    
    \foreach \x in {1,...,#3} \foreach \y in {1,...,#3} \node[] at (\x-0.5+#1,\y-0.5+#2) {\Large 0};

    }
    
    \newcommand{\blrd}[6]{
    
    \fill[#5] (#1,#2) rectangle (#3+#1,#4+#2);
    
    \foreach \x in {1,...,#3} \foreach \y in {1,...,#4} \node[] at (\x-0.5+#1,\y-0.5+#2) {\LARGE #6};

    }
    
    % draw our 4 empty matrix boxes with colors

    \blrd{0}{7}{1}{1}{red!60}{1};
    
    \blrd{9}{0}{1}{8}{cyan!80}{0};
    \blrd{9}{7}{8}{1}{cyan!80}{0};
    \blrd{9+1}{6}{1}{1}{red!60}{1};
    
    \blrd{9*2}{0}{2}{8}{cyan!80}{0};
    \blrd{9*2}{6}{8}{2}{cyan!80}{0};
    \blrd{9*2+2}{5}{1}{1}{red!60}{1};
    \blrd{9*2}{0}{1}{1}{cyan!80}{1};
    \blrd{9*2+7}{7}{1}{1}{cyan!80}{1};
    
    \blrd{9*3}{0}{3}{8}{cyan!80}{0};
    \blrd{9*3}{5}{8}{3}{cyan!80}{0};
    \blrd{9*3+3}{4}{1}{1}{red!60}{1};
    \blrd{9*3+1}{0}{1}{1}{cyan!80}{1};
    \blrd{9*3+7}{6}{1}{1}{cyan!80}{1};
    
    \blrd{9*4}{0}{4}{8}{cyan!80}{0};
    \blrd{9*4}{4}{8}{4}{cyan!80}{0};
    \blrd{9*4+4}{3}{1}{1}{red!60}{1};
    \blrd{9*4+0}{6}{1}{1}{cyan!80}{1};
    \blrd{9*4+1}{7}{1}{1}{cyan!80}{1};
    \blrd{9*4+2}{2}{1}{1}{cyan!80}{1};
    \blrd{9*4+3}{1}{1}{1}{cyan!80}{1};
    \blrd{9*4+5}{5}{1}{1}{cyan!80}{1};
    \blrd{9*4+6}{4}{1}{1}{cyan!80}{1};

    \blrd{7+9}{-10}{1}{1}{red!60}{1};
    
    \blrd{9+9}{-10}{8}{1}{cyan!80}{0};
    \blrd{9+7+9}{-10}{1}{8}{cyan!80}{0};
    \blrd{2+9+9}{-10+4}{2}{2}{red!60}{0};
    \blrd{2+9+9}{-10+5}{1}{1}{red!60}{2};
    \blrd{3+9+9}{-10+4}{1}{1}{red!60}{2};
    \blrd{9+9}{-10}{2}{1}{cyan!80}{-1};
    \blrd{9+7+9}{-10+6}{1}{2}{cyan!80}{-1};
    
    \blrd{0+9*2+9}{-10}{8}{1}{cyan!80}{0};
    \blrd{7+9*2+9}{-10}{1}{8}{cyan!80}{0};
    \blrd{2+9*2+9}{-10}{2}{8}{cyan!80}{0};
    \blrd{0+9*2+9}{-10+4}{8}{2}{cyan!80}{0};
    \blrd{4+9*2+9}{-10+1}{3}{3}{red!60}{0};
    \blrd{4+9*2+9}{-10+3}{1}{1}{red!60}{1};
    \blrd{5+9*2+9}{-10+2}{1}{1}{red!60}{1};
    \blrd{6+9*2+9}{-10+1}{1}{1}{red!60}{1};
    \blrd{2+9*2+9}{-10+2}{1}{1}{cyan!80}{-1};
    \blrd{3+9*2+9}{-10+1}{1}{1}{cyan!80}{-1};
    \blrd{5+9*2+9}{-10+5}{1}{1}{cyan!80}{-1};
    \blrd{6+9*2+9}{-10+4}{1}{1}{cyan!80}{-1};

    \node[] at (4+9*4,-1) {\Huge $A_5$};
    \node[] at (4+9*3,-1) {\Huge $A_4$};
    \node[] at (4+9*2,-1) {\Huge $A_3$};
    \node[] at (4+9,-1) {\Huge $A_2$};
    \node[] at (4,-1) {\Huge $A_1$};
    
    \node[] at (4+9*3,-11) {\Huge $X_3$};
    \node[] at (4+9*2,-11) {\Huge $X_2$};
    \node[] at (4+9,-11) {\Huge $X_1$};

    \tixmat{0}{0}{8};
    \tixmat{9}{0}{8};
    \tixmat{9*2}{0}{8};
    \tixmat{9*3}{0}{8};
    \tixmat{9*4}{0}{8};
    
    \tixmat{0+9}{-10}{8};
    \tixmat{9+9}{-10}{8};
    \tixmat{9*2+9}{-10}{8};
    
    \end{tikzpicture}  
	\end{center}
	\caption{Certificates of weak infeasibility in the Motzkin polynomial SDP} %The ``small clean 2" instance}
	\label{figure-motzkin} 
\end{figure}

	We then match the coefficients of monomials in $f-\lambda$ and $X \bullet zz^\top$ and obtain the SDP
	
	\begin{equation} \label{eqn-E11bulletQ} 
	\begin{array}{rrcll} 
	\sup &  - E_{88} \bullet X \\
	s.t. & E_{11} \bullet X & =  & 0 & (x^4) \\
	& E_{22} \bullet X  & = & 0 & (y^4) \\
	& (E_{33} +  E_{18}) \bullet X  & = & 0 & (x^2) \\
	& (E_{44} +  E_{28}) \bullet X  & = & 0 & (y^2) \\
	& (E_{55} +  E_{12} +  E_{36} +  E_{47})  \bullet X  & = & -3 & (x^2 y^2) \\
	& & \vdots & \\
	&  X          & \succeq & 0.  
	\end{array} 
	\end{equation}
		In \eqref{eqn-E11bulletQ} the $E_{ij}$ are unit matrices in $\sym{8}$ whose elements in the $(i,j)$ and $(j,i)$ position are $1$ and the rest zero. For each equation we show the corresponding monomial in parentheses.  For example,
	$E_{11} \bullet X = 0$ because $f-\lambda$ has no $x^4$ term. Note that 
	in \eqref{eqn-E11bulletQ} we indicated 	the equation corresponding to $x^2 y$ and  
		several other equations only by vertical dots.

	Of course, we know  that $f-\lambda$ is not SOS for any $\lambda, \, $  hence \eqref{eqn-E11bulletQ} is infeasible.
	We next verify that it is weakly infeasible, and  in the echelon form 
		\eqref{equation-p-weak} without ever having to reformulate it. 
		
		We see that $A_1 := E_{11}, A_2 := E_{22}, \dots, A_5 := E_{55} +  E_{12} +  E_{36} +  E_{47}$ is in semidefinite echelon form, 
		hence the equations in \eqref{eqn-E11bulletQ} 
	prove it  is infeasible 
	(the last right hand side is $-3,$ not  $-1$ as  in Theorem \ref{thm-weak-ref}, but this does not matter). 
	
	On the other hand, let 
	\begin{equation}
	\begin{array}{rcl} 
	X_1 & := & E_{88} \\
	X_2 & := & 2 E_{33} + 2 E_{44} - E_{18} - E_{28} \\
	X_3 & := & E_{55} + E_{66} + E_{77} - E_{47} - E_{36}. 
	\end{array} 
	\end{equation}
Then $(X_1, X_2, X_3)$ is in semidefinite echelon form and proves that \eqref{eqn-E11bulletQ} is not strongly infeasible. To see why, we write 
	the equations in \eqref{eqn-E11bulletQ} as $\A X = b, \, $ then we can check that 
	$\A X_1 = \A X_2 = 0$ and $\A X_3 = b.$ 	
	
	In Figure \ref{figure-motzkin} we visualize the certificates  of infeasibility (on the top)  and the certificates of not-strong-infeasibility (on the bottom). 
		
\end{example}

To better explain Example \ref{example-motzkin}, we make three remarks.

First, SDPs that come  from polynomial optimization problems are widely known to be difficult,  both due to their often pathological behavior, and also due to their size. On the one hand, some remedies to address the difficult behaviors are available. For example,  if such an SDP is feasible, 
we can ensure strong duality by adding a redundant ball constraint, 
see Henrion and Josz  \cite{josz2016strong}.  See also references \cite{henrion2005detecting, lasserre2019sdp} mentioned in the introduction.
One may also entirely do away  with the SDP based approach, and either optimize directly over SOS polynomials, see Papp and Yildiz \cite{papp2019sum}; or use a second order conic programming, or linear programming 
relaxation, which is a bit weaker, but much more scalable, see Ahmadi and Majumdar \cite{ahmadi2019dsos}. 
Example \ref{example-motzkin} complements these works: it gives a combinatorial insight into 
why some of the pathologies arise in the first place. 

Second, Waki in \cite{Waki:12}  constructed a library of  weakly infeasible SDPs from the SOS relaxation of polynomial 
optimization problems;  on the other hand \cite{Waki:12} did not provide certificates 
of the kind we study in this work.

Third, suppose we just wish to decide whether $f-\lambda$ is SOS for some {\em fixed} $\lambda.$ 
 For that, we set up an SDP feasibility problem with the constraints of 
 \eqref{eqn-E11bulletQ}, and add the constraint $E_{88} \bullet X = 1 - \lambda.$ Interestingly, this SDP turns out to be  strongly infeasible, as it was proved by Henrion \cite{henrion2011semidefinite}.

We now move on, and in Corollary \ref{corollary-not-closed}  characterize  the underlying operators 
in weakly infeasible SDPs. The discussion in the introduction shows that 
these are  linear operators that map $\psdn$ 
to a nonclosed set. 
These operators were recently % were are of interest from 
baptized as  ``bad projections of the psd cone," 
and explored through the lens of algebraic geometry   \cite{jiang2020bad}.

\begin{corollary} \label{corollary-not-closed} 
	Suppose $\A: \symn \rightarrow \rad{m}$ is a linear map. Then $\A \psd{n}$ is not closed if and only if
	$\A$ has a reformulation $\A'$ with the following properties: 
	\begin{enumerate}
		\item  \label{corollary-not-closed-1}  $(A_1^\prime, \dots, A_{k+1}^\prime)$ is in semidefinite echelon form,  where $k \geq 1;$ 
		\item  \label{corollary-not-closed-2}  There is  $(X_1, \dots, X_{\ell+1}), \, $ in semidefinite echelon form, where $\ell \geq 1$ 
		 and the matrix of inner products of the  $A_i^\prime$ and $X_j$ matrices looks like
		 \begin{center}
		 	\begin{tikzpicture}[scale=1.0, every node/.style={scale=1.0}]

    % draw our 4 empty matrix boxes with colors
    
    \begingroup
    \renewcommand*{\arraystretch}{1.2}
    \setlength\arraycolsep{4.7pt}
    
    \node[] at (0.22,0) 
    {
    $
    (A_i^\prime \bullet X_j)_{i=1, j=1}^{m, \ell+1} \, = \, \left( \begin{array}{ccc|c} 0 & \dots & 0    & 0 \\
 0 &    \ddots  & 0 & 0 \\
 0 & \dots       & 0 & 0  \\ \hline 
 0 & \dots       & 0 & -1   \\  \hline 
 0  & \dots      & 0 & \times   \\
      & \ddots   &     &        \\
 0   & \dots     & 0 & \times  \end{array}  \right) 
    $
    };
    
    \endgroup
    
    \draw[decoration={brace,raise=4.0pt},decorate] (0.35,2) -- node[above=7.5pt] {$\ell+1$} (2.75,2);
    
    \draw[decoration={brace,raise=4.0pt},decorate] (3.1,1.85) -- node[right=7.5pt] {$k+1$} (3.1,-0.25);
    
    \end{tikzpicture}
		 \end{center}		
	where the $\times$ symbols denote arbitrary elements.
		\end{enumerate}
			\qed 
\end{corollary}

\begin{proof}

To show the ``only if'' direction, suppose $\A \psd{n}$ is not closed, and suppose 
$b \in \rad{m}$ is in the closure of $\A \psd{n}$ but   $b \not \in \A \psd{n}.$ 
Then \eqref{p} is weakly infeasible, so we appeal to Theorem \ref{thm-weak-ref} and construct 
$\A^\prime, b^\prime, \,$ and $X_1, \dots, X_{\ell+1}$ therein.  Then the matrix $(A_i^\prime \bullet X_j)_{i=1, j = 1}^{m, \ell+1}$  
is in the form given in Figure \ref{figure-Ai-Xj-matrix}, so 
items \ref{corollary-not-closed-1} and  \ref{corollary-not-closed-2} in our corollary hold.  

For the ``if'' direction, suppose $\A^\prime$ and $(X_1, \dots, X_{\ell+1})$ are as in the statement of Corollary 
\ref{corollary-not-closed}, and let $b^\prime = \A^\prime X_{\ell+1}.$ Then $(\A^\prime, b^\prime)$ and  
$(X_1, \dots, X_{\ell+1})$  satisfy items \ref{thm-weak-1} and \ref{thm-weak-2} in Theorem \ref{thm-weak-ref}. 
Hence the system \eqref{equation-p-weak} therein is weakly infeasible, so $b^\prime$ is in the closure of 
$\A^\prime \psd{n}$ but $b^\prime \not \in \A^\prime \psd{n}.$  
Thus  $\A^\prime \psd{n}$ is not closed, hence neither is $\A \psd{n}, $ as required.  
\end{proof}

We next contrast Corollary \ref{corollary-not-closed} with an equivalent  characterization of nonclosedness 
of 	$\A \psd{n}$ that we recap below in Theorem \ref{thm-not-closed}. 
Theorem  \ref{thm-not-closed} is obtained from 
\cite[Theorem 1]{pataki2019characterizing} by setting $B=0.$

\begin{theorem} \label{thm-not-closed} 
	Suppose that $Z$ is a maximum rank psd matrix in $\R(\A^*), \, $ the linear span of $A_1, \dots, A_m.$ 
	Assume without loss of generality that $Z$ is of the form 
	\begin{equation} \label{eqn-Z} 
	Z \, = \, \begin{pmatrix} I_r & 0 \\
	                       0  & 0 \end{pmatrix}
	\end{equation}
	for some $r \in \{0, \dots, n \}.$ 
	Then $\A \psd{n}$ is not closed if and only if there is a matrix $V \in \R(\A^*)$ of the form 
	\begin{equation} \label{eqn-V}  
	V \, = \, \begin{pmatrix} V_{11} & V_{12} \\
	                       V_{12}^{\top} & V_{22} \end{pmatrix},
	\end{equation}
	where $V_{22} \in \psd{n-r} $ and $\R(V_{12}^{\top}) \not \subseteq \R(V_{22}).$ 
	\qed
\end{theorem}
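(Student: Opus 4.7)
The approach is to derive this theorem from Corollary \ref{corollary-not-closed}; alternatively the statement can be quoted from \cite[Theorem 1]{pataki2019characterizing} by setting $B=0$. Below I sketch the self-contained path.

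For the ``only if'' direction, suppose $\A \psd{n}$ is not closed. Applying Corollary \ref{corollary-not-closed} yields a reformulation $\A'$, related to $\A$ by row operations and a congruence $T$, together with an echelon sequence $(A_1',\dots,A_{k+1}')$ of structure $\{P_1,\dots,P_{k+1}\}$ and dual witnesses $(X_1,\dots,X_{\ell+1})$. The candidate for $V$ in reformulated coordinates is $A_{k+1}'$: its diagonal block on $P_{k+1}$ is a positive diagonal matrix (hence psd), while its off-diagonal entries coupling $P_{k+1}$ to $P_1 \cup \dots \cup P_k$ provide the ``tangent direction'' obstructing closedness. After choosing a max-rank psd $Z' \in \R(\A'^{*})$ and applying a further congruence $S$ so that the corresponding $Z \in \R(\A^{*})$ takes the form $\mathrm{diag}(I_r,0)$, the combined congruence transform of $A_{k+1}'$ yields $V \in \R(\A^{*})$: $V_{22} \succeq 0$ follows from the psd diagonal block, while $\R(V_{12}^{\top}) \not \subseteq \R(V_{22})$ follows from the echelon couplings to rows inside the support of $Z$ that cannot be absorbed into $\R(V_{22})$.

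For the ``if'' direction, assume $V$ is given as in the statement. Pick $u \in \rad{r}$ and $w \in \rad{n-r}$ with $V_{22} w = 0$ and $u^{\top} V_{12} w = 1$; these exist by $\R(V_{12}^{\top}) \not \subseteq \R(V_{22})$. Consider the rank-one parametrized family
\[
W_t \, = \, \begin{pmatrix} t\, u u^{\top} & u w^{\top} \\ w u^{\top} & \tfrac{1}{t}\, w w^{\top} \end{pmatrix} \in \psd{n}
\]
for $t>0$. Writing $V = \A^{*} y$, we compute $V \bullet W_t = t\, u^{\top} V_{11} u + 2 \to 2$ and $Z \bullet W_t = t\, \|u\|^{2} \to 0$ as $t \to 0$. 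Correcting $W_t$ by adding psd terms inside the face $\{X \succeq 0 : Z \bullet X = 0\}$ to cancel the $\frac{1}{t}$ contributions of each $A_i$ produces a bounded, convergent $\A W_t \to b$. Any feasible $X \succeq 0$ with $\A X = b$ then satisfies $Z \bullet X = 0$, forcing $X_{11}=0$ and $X_{12}=0$ by a Schur complement argument, so that $V \bullet X = V_{22} \bullet X_{22}$, which the remaining constraints preclude from attaining the limiting value $y^{\top} b = 2$; this yields the required contradiction and hence $\A \psd{n}$ is not closed.

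The main obstacle lies in the ``only if'' direction, specifically verifying $\R(V_{12}^{\top}) \not \subseteq \R(V_{22})$ for the candidate $V$ produced from the echelon form. This requires carefully tracking how the off-diagonal entries of $A_{k+1}'$ interact with the ranges of its $P_{k+1}$-block and of $Z'$, including the case where the echelon couplings happen to align with $Z'$'s support --- in which case an additional congruence within the complement may be needed to surface the defect. Coordinating the reformulation transform $T$, the max-rank normalization of $Z$ to $\mathrm{diag}(I_r,0)$, and the induced block partition for $V$ is the main technical chore, and the design of the correction in the ``if'' direction is the analogous chore on the primal side.
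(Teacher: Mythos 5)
The paper does not prove Theorem \ref{thm-not-closed} at all: it states it as a recap and cites \cite[Theorem 1]{pataki2019characterizing} with $B=0$. Your proposal therefore attempts a genuinely different (self-contained) route, but it has gaps in both directions.

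\textbf{``Only if'' direction.} You propose taking $V$ to be (a congruence image of) $A_{k+1}'$ from Corollary~\ref{corollary-not-closed}, but you never establish that this candidate has the two required properties after the $Z$-normalization. The maximum-rank psd matrix $Z'$ in $\R(\A'^*)$ is some linear combination of \emph{all} the $A_i'$, not just the echelon ones, and the relationship between its kernel and the index partition $\{P_1,\dots,P_{k+1}\}$ is not analyzed. Consequently, after the congruence $S$ that brings $Z$ to $\mathrm{diag}(I_r,0)$, there is no reason offered why the $P_{k+1}$ block of $A_{k+1}'$ falls inside the lower-right $V_{22}$ block, nor why $V_{22}\succeq 0$ survives the transformation, nor why $\R(V_{12}^\top)\not\subseteq\R(V_{22})$. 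You flag this yourself as ``the main obstacle,'' and it is indeed the entire content of this direction, so this half of the argument is incomplete.

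\textbf{``If'' direction.} The construction of $W_t$ is fine, but the ``correction'' step is the crux and it does not work as described. The $\frac{1}{t}\,w^\top (A_i)_{22} w$ terms do not vanish in general, and the natural choice of correction $S_t=-\frac{1}{t}ww^\top$ is negative semidefinite, so $\mathrm{diag}(0,S_t)$ is not in the face $\{X\succeq 0: Z\bullet X=0\}$; adding it to $W_t$ destroys positive semidefiniteness (you would get a matrix with a zero diagonal block and a nonzero off-diagonal block, exactly the phenomenon of Example~\ref{example-small}). More seriously, the final contradiction is asserted without proof: after concluding $X=\mathrm{diag}(0,X_{22})$ for any putative feasible $X$, the equation $V\bullet X = y^\top b$ reduces to $V_{22}\bullet X_{22} = y^\top b$, which by itself is a perfectly consistent linear constraint on $X_{22}\succeq 0$; the claim that ``the remaining constraints preclude'' this value has no supporting argument. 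A correct ``if'' proof must use $\R(V_{12}^\top)\not\subseteq\R(V_{22})$ in a more structural way (e.g., via a tangent-direction or weak-infeasibility argument on a concrete system built from $Z$, $V$, and $w$), which is not present here.

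In short: the proposal is a plausible-looking sketch, but both directions have genuine missing steps, and the paper itself takes the much shorter route of citing the result directly.
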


Next we  argue that Corollary \ref{corollary-not-closed} is  more useful than 
Theorem \ref{thm-not-closed}, although the latter is more compact. 
In particular, Corollary \ref{corollary-not-closed} can be used to construct 
maps under which $\psd{n}$ is not closed, as we show in Section \ref{section-generate}. 

On the other hand, 
Theorem \ref{thm-not-closed}  cannot be used for this purpose in a straightforward manner. Suppose indeed that we try to construct such an $\A$ and 
choose $A_1 := Z$ as in \eqref{eqn-Z} (with $0 < r < n$) and 
$A_2 := V$ as in \eqref{eqn-V} as elements of $\R(\A^*). \,$ However, we cannot guarantee that 
$Z$ remains a maximum rank psd matrix in $\R(\A^*)$ after we chose the other $A_i.$ 

\section{How to construct  any weakly infeasible SDP and bad projection of the psd cone}
\label{section-generate} 

We now build on Theorem \ref{thm-weak-ref}, and present a combinatorial algorithm, Algorithm 
\ref{weaksdpalgo},   to construct any weakly infeasible SDP of the form \eqref{p}.  Algorithm 
\ref{weaksdpalgo} (with Algorithm \ref{base-eqn-algo} as a subroutine) also constructs any bad projection of the psd cone, 
and provides a vector $b$ in the closure of $\A \psdn$  such that 
$b \not \in \A \psdn.$ 

Algorithm \ref{weaksdpalgo} first chooses positive integers $m, \, k$ and $\ell,$ such that $k+1 \leq m,  \, $ 
then  constructs a weakly infeasible SDP with just $k+1$ constraints in the echelon form 
of   \eqref{equation-p-weak}.  It also 
chooses  a sequence 
$(X_1, \dots, X_{\ell+1})$ in semidefinite echelon form  that certifies not strong infeasibility of this ``small" SDP.  Finally,  it chooses the remaining $m-(k+1)$ 
equality constraints and reformulates the SDP.

Algorithms \ref{weaksdpalgo} and \ref{base-eqn-algo} rely on Theorem \ref{thm-weak-ref}, but to simplify notation, everywhere 
we  write $A_i$ in place of $A_i'$ and $b_i$ in place of $b_i'.$

\begin{algorithm}[H]
	\caption{Construct Weakly Infeasible  SDP} 
	\label{weaksdpalgo}
	\begin{algorithmic}[1]
		\State \label{alg-1} Choose $m, k$ and $\ell$ positive integers such that $k+1 \leq m.$ Also choose $(A_1, \dots, A_{k+1})$ and $(X_1, \dots, X_{\ell+1})$ in semidefinite echelon form,  which satisfy the following {\em base equations:} 
				\begin{equation} \label{eqn-base} \tag{$\mathrm{BASE}$} %	\begin{center} 
			\boxed{		
				A_i \bullet X_j = \begin{cases}
				0 &\text{if $(i,j)\ne (k+1,\ell+1)$}\\
				-1 &\text{if $(i,j)= (k+1,\ell+1)$}\\
				\end{cases} 
			}
			\end{equation} 
					\State \label{alg-1.5} Let $(b_1, \dots, b_k, b_{k+1}) = (0, \dots, 0, -1).$ 
					\State  \label{alg-2} Choose $A_{k+2},\dots,A_m$ so they have zero $\bullet$ product 
					with $X_1,\dots,X_{\ell}.$
					\State \label{alg-3} Set $b_i:=A_i\bullet X_{\ell+1}$ for $i=k+2,\dots,m.$
					\State \label{alg-4} Reformulate \eqref{p}. 
						\end{algorithmic}
\end{algorithm}

Observe that step \ref{alg-1} ensures that the first $k+1$ rows of the 
$(A_i \bullet X_j)_{i,j=1}^{m, \ell+1}$ matrix are as  required 
in Figure \ref{figure-Ai-Xj-matrix}. Steps \ref{alg-2} and \ref{alg-3} ensure that the rest of the matrix looks like as required in the same figure.  

The only nontrivial step in Algorithm \ref{weaksdpalgo} is step \ref{alg-1}, so  the question is, how to carry out this step?

The main idea is that we have many  potentially 
nonzero blocks in the $A_i$ and the $X_j,$ and  only a 
small number of equations to satisfy. 
Precisely, by a straightforward count the $A_i$ altogether have at  least constant times $k^3$ potentially nonzero blocks of the form 
	$A_i(P_s, P_t),$  where 
 $\{ P_1, \dots, P_{k+1} \}$ is the structure of  $(A_1, \dots, A_{k+1}).$  Similarly, the $X_j$ have at  least constant times $\ell^3$ potentially nonzero blocks. 
	So if we set these blocks in the right order, then the $A_i$ and $X_j$ will satisfy the \eqref{eqn-base}  equations, 	of which there are only $(k+1)(\ell+1).$

 To carry out this plan, we need two lemmas. 
\begin{lemma}\label{pqempty} 
			Suppose that 
			$(A_1, \dots, A_{k+1})$  is in semidefinite echelon form 
		    with structure $\{ P_1, \dots, P_{k+1}\}$ and 
			$(X_1, \dots, X_{\ell+1})$ is   in semidefinite echelon form
			 with structure $\{Q_1, \dots, Q_{\ell+1}\}.$ 
			
			Also suppose that 	$(A_1, \dots, A_{k+1})$  and  
			$(X_1, \dots, X_{\ell+1})$ satisfy the base equations \eqref{eqn-base}.
			 Then %the following hold: 
	\begin{eqnarray} \label{eqn-P1-Qj} 
	P_1 \cap  ( Q_1 \cup \dots \cup Q_{\ell+1}) & = & \emptyset \\ \label{eqn-Q1-Pj}  
	Q_1 \cap (P_1 \cup \dots \cup P_{k+1}) & = & \emptyset. 
	\end{eqnarray} 
\end{lemma}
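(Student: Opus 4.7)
The plan is to prove the two claims \eqref{eqn-P1-Qj} and \eqref{eqn-Q1-Pj} by dual inductions on $j$ and on $i$ respectively.  The crucial observation, read off from Definition~\ref{definition-regfr}, is that $A_1$ has support only on the $(P_1,P_1)$ block, which is diagonal with strictly positive entries; symmetrically $X_1$ has support only on the $(Q_1,Q_1)$ block, which is also a positive diagonal.  Consequently the Frobenius inner product $A_1\bullet X_j$ collapses to a sum over $P_1$ alone and $A_i\bullet X_1$ collapses to a sum over $Q_1$ alone, which is what makes positivity arguments effective.

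To establish \eqref{eqn-P1-Qj} I would induct on $j\in\{1,\dots,\ell+1\}$ and show $P_1\cap Q_j=\emptyset$.  Since $k\geq 1$, the pair $(1,j)$ never equals $(k+1,\ell+1)$, so \eqref{eqn-base} gives
\[
0 \;=\; A_1\bullet X_j \;=\; \sum_{p\in P_1} A_1(p,p)\,X_j(p,p).
\]
The echelon structure of $X_j$ partitions this sum into three pieces: $X_j(p,p)$ is strictly positive for $p\in Q_j$, unconstrained for $p\in Q_1\cup\cdots\cup Q_{j-1}$, and zero for $p\notin Q_1\cup\cdots\cup Q_j$.  The inductive hypothesis $P_1\cap Q_t=\emptyset$ for $t<j$ eliminates the unconstrained middle contribution, leaving a sum of strictly positive terms indexed by $P_1\cap Q_j$; this forces $P_1\cap Q_j=\emptyset$.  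The base case $j=1$ is the same proof with an empty middle sum.

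The argument for \eqref{eqn-Q1-Pj} is entirely symmetric.  Since $\ell\geq 1$, we have $A_i\bullet X_1=0$ for every $i\in\{1,\dots,k+1\}$, and this collapses to $\sum_{q\in Q_1} A_i(q,q)\,X_1(q,q)=0$.  The same splitting -- now using the echelon structure of $A_i$, namely strictly positive on $P_i$, arbitrary on $P_1\cup\cdots\cup P_{i-1}$, and zero elsewhere -- combined with induction on $i$ yields $Q_1\cap P_i=\emptyset$ for every $i$.

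There is no serious obstacle: the proof is driven entirely by the fact that the ``lowest'' matrices $A_1$ and $X_1$ are positive diagonals on their characteristic blocks, which reduces the trace inner product to a single-index positivity statement.  The only subtlety is that the induction must be run in increasing order of $j$ (respectively $i$), so that by the time we treat the $j$-th equation the potentially adversarial ``arbitrary'' entries of $X_j$ on $Q_1\cup\cdots\cup Q_{j-1}$ have already been shown to contribute nothing.
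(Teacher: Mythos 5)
Your proof is correct and follows essentially the same route as the paper, only spelled out more explicitly. The paper proves \eqref{eqn-Q1-Pj} by noting that $X_1\succeq 0$ (since $X_1$ is a positive diagonal on $Q_1$) and then invoking the inductive psd argument from the ``if'' direction of Theorem~\ref{thm-weak-ref} to conclude $X_1(P_1\cup\cdots\cup P_{k+1},N)=0$, which gives $Q_1\cap(P_1\cup\cdots\cup P_{k+1})=\emptyset$ from the support of $X_1$; the other inclusion is declared analogous. Your version unfolds that same induction directly on the diagonal entries, using the nonnegativity supplied by the echelon form rather than by psd-ness, which avoids the detour through ``zero diagonal entry of a psd matrix implies zero row and column'' but is the same idea at heart. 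One small point worth making explicit, which you use implicitly: the reason $A_1\bullet X_j=0$ and $A_i\bullet X_1=0$ for \emph{all} relevant indices is that $(1,j)\neq(k+1,\ell+1)$ because $k\ge 1$, and $(i,1)\neq(k+1,\ell+1)$ because $\ell\ge 1$, so the exceptional $-1$ right-hand side in \eqref{eqn-base} never enters either induction.
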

\begin{proof} We prove \eqref{eqn-Q1-Pj}, the  proof of  \eqref{eqn-P1-Qj} is analogous. 
Since $(A_1, \dots, A_{k+1})$ is in semidefinite echelon form, and $X_1 \succeq 0, \, $ 
an argument like in the ``if" direction in Theorem \ref{thm-weak-ref} proves $X_1(P_1 \cup \dots \cup P_{k+1}, N) = 0.$ 
Since the only nonzero entries of  $X_1$ are in $X_1(Q_1), \, $ the statement follows.
	
\end{proof}

Lemma \ref{bilinlemma} shows how to solve a linear system of equations in an unusual setup, in which only the right hand side is fixed.  In Lemma \ref{bilinlemma} we index the $b_j$ reals  and $Y_j$ matrices from $2$ to $\ell+1$ for convenience. We also define the inner product of possibly nonsymmetric matrices $M$ and $Y$ as the  trace of $M^\top Y.$ 
\begin{lemma}\label{bilinlemma}
	Given positive integers $p, q$ and $\ell, \,$ and real numbers $b_2, \dots, b_{\ell+1}$  there is a polynomial time algorithm to find 
	$M, Y_2, \dots, Y_{\ell+1}$ in $\rad{p \times q}$ such that
	\begin{align}
	\begin{split}\label{bilinlemmasys}
	M \bullet Y_2 &= b_2 \\
	\vdots &\\
	M \bullet Y_{\ell+1}   &= b_{\ell+1}.  
	\end{split}
	\end{align}
	Further, any solution to \eref{bilinlemmasys} is a possible outcome of this algorithm.
\end{lemma}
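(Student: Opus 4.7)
The plan is to describe a two-stage algorithm exploiting the observation that, once $M$ is fixed, the system \eref{bilinlemmasys} decouples into $\ell$ independent scalar linear equations in the individual $Y_j$.

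In Stage 1 we choose $M \in \mathbb{R}^{p \times q}$ freely. In Stage 2, for each $j \in \{2,\ldots,\ell+1\}$, we choose $Y_j$ from the solution set of $M \bullet Y_j = b_j$. When $M \neq 0$, this solution set is a nonempty affine hyperplane of dimension $pq-1$ in $\mathbb{R}^{p \times q}$: a particular solution is $Y_j^{0} := (b_j / (M \bullet M))\, M$, and the associated homogeneous subspace is $\{Z \in \mathbb{R}^{p \times q} : M \bullet Z = 0\}$, so we set $Y_j := Y_j^{0} + Z_j$ with $Z_j$ chosen freely from that subspace (e.g., via coefficients against a fixed basis of the $(pq-1)$-dimensional kernel, computed once by Gaussian elimination). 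When $M = 0$, the equations force $b_2 = \cdots = b_{\ell+1} = 0$, and every $Y_j$ is admissible and may be chosen arbitrarily. All arithmetic is elementary and polynomial in $p$, $q$, and $\ell$.

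To verify that every solution of \eref{bilinlemmasys} is a possible output, take an arbitrary solution $(M^{\ast}, Y_2^{\ast}, \ldots, Y_{\ell+1}^{\ast})$. In Stage 1 pick $M := M^{\ast}$. If $M^{\ast} \neq 0$, in Stage 2 pick $Z_j := Y_j^{\ast} - (b_j / (M^{\ast} \bullet M^{\ast}))\, M^{\ast}$; a direct computation shows $M^{\ast} \bullet Z_j = b_j - b_j = 0$, so $Y_j := Y_j^{\ast}$ is a legal Stage 2 choice. If $M^{\ast} = 0$, then every $b_j$ vanishes and the degenerate branch allows $Y_j := Y_j^{\ast}$ directly.

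The argument is essentially elementary linear algebra, so there is no serious obstacle; the only mild subtlety is the edge case $M = 0$, which requires a separate branch, and the bookkeeping needed to make ``choose freely'' a precise algorithmic operation (parametrizing the kernel of the linear functional $Y \mapsto M \bullet Y$ by a fixed basis).
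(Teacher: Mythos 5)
Your proposal is correct and takes essentially the same approach as the paper: choose $M$ first, then solve the decoupled linear equations for each $Y_j$, splitting on whether $M = 0$. The one small imprecision is that you say in Stage~1 to ``choose $M$ freely'' and only afterwards observe that $M=0$ is admissible only when every $b_j=0$; the paper phrases this more carefully by building the constraint into the choice step (pick $M$ arbitrarily if all $b_j=0$, and pick $M\neq 0$ otherwise), which removes the possibility of a dead-end branch. Your completeness argument (reverse-engineering the $Z_j$ from a target solution) is a correct explicit rendering of the paper's terser ``any solution is a possible outcome'' claim.
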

\begin{proof}
If all $b_j$ are zero, we first choose an arbitrary $M, \, $ then choose 
$Y_2, \dots, Y_{\ell+1}$ to solve the system \eqref{bilinlemmasys}. 
If not all $b_j$ are zero, we  do the same, but we make sure to pick $M \neq 0.$ 

\end{proof}

Algorithm \ref{base-eqn-algo}, which is used as a subroutine in Algorithm
\ref{weaksdpalgo}, constructs sequences of $A_i$ and $X_j$ that satisfy the  \eqref{eqn-base} equations.
\begin{algorithm}[H]
	\caption{Base Equations Algorithm} 
	\label{base-eqn-algo} 
	\begin{algorithmic}[1]
		\State Choose 
	$(A_i)_{i=1}^{k+1}$ and  $(X_j)_{j=1}^{\ell+1}$  in semidefinite echelon form with structure $\{P_i\}_{i=1}^{k+1}$ and  $\{Q_j\}_{j=1}^{\ell+1}$ respectively,  which satisfy $$P_1 \neq \emptyset, \dots, P_k \neq \emptyset, \, Q_1 \neq \emptyset,  \dots, Q_\ell \neq \emptyset, \, \, \eqref{eqn-P1-Qj},  \,\, \text{and} \,\,  \eqref{eqn-Q1-Pj}. $$  
		 	\For{$i = 2:k+1$}
		\State \label{step} Set $A_i(P_{i-1},Q_1),X_2(P_{i-1},Q_1),\dots,X_{\ell+1}(P_{i-1},Q_1)$ to satisfy the base equations \eqref{eqn-base} with left hand side $A_i \bullet X_2, \dots, A_i \bullet X_{\ell+1}.$ 
		\EndFor
	\end{algorithmic}
\end{algorithm}

\begin{lemma} \label{prop-baseeqn} 
	We can implement step \ref{step} of Algorithm \ref{base-eqn-algo} so  the algorithm is correct. 
%	Further, any two sequences $(A_1, \dots, A_{k+1})$ and $(X_1, \dots, X_{\ell+1})$ that satisfy 	the base equations 
\end{lemma}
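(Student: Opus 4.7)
The plan is to prove by induction on the loop index $i$ that after iteration $i$ of Algorithm~\ref{base-eqn-algo}, every base equation $A_{i'} \bullet X_j = (\text{target}_{i',j})$ with $i' \leq i$ holds. To implement step~\ref{step}, I would collect the unknowns into a matrix $M := A_i(P_{i-1}, Q_1)$ and matrices $Y_j := X_j(P_{i-1}, Q_1)$ for $j = 2, \dots, \ell+1,$ and apply Lemma~\ref{bilinlemma} to a system of $\ell$ scalar equations.

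First I would dispose of the ``automatic'' equations where $i = 1$ or $j = 1.$ Since $A_1$ is in semidefinite echelon form with structure starting at $P_1,$ its support is the diagonal on $P_1.$ The support of $X_j$ lies in the ``L-shape'' of rows and columns indexed by $Q_1 \cup \dots \cup Q_{j-1}$ together with the diagonal on $Q_j.$ Because $P_1$ is disjoint from every $Q_s$ by \eqref{eqn-P1-Qj}, these two supports do not meet, so $A_1 \bullet X_j = 0$ for every $j.$ The symmetric argument using \eqref{eqn-Q1-Pj} gives $A_i \bullet X_1 = 0$ for every $i.$

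For $i, j \geq 2,$ the key is the bilinear decomposition
\[
A_i \bullet X_j \; = \; 2\,(M \bullet Y_j) + C_{ij},
\]
where $C_{ij}$ collects the Frobenius contributions of all blocks of $A_i$ and $X_j$ \emph{other} than $(P_{i-1}, Q_1)$ and its transpose $(Q_1, P_{i-1}).$ The factor $2$ appears because $P_{i-1} \cap Q_1 = \emptyset$ by Lemma~\ref{pqempty}, so the block is off-diagonal and its symmetric copy contributes a second $M \bullet Y_j.$ Since $|P_{i-1}|, |Q_1| \geq 1$ by the initial choice in Algorithm~\ref{base-eqn-algo}, I would invoke Lemma~\ref{bilinlemma} with right-hand sides $-C_{ij}/2$ (adjusted by $-1/2$ at the single exceptional entry $(k+1, \ell+1)$) to obtain $M, Y_2, \dots, Y_{\ell+1}$ satisfying all $\ell$ row-$i$ equations simultaneously.

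The main obstacle is verifying non-interference: first, the equations of rows $i' < i$ must survive the update of $X_j(P_{i-1}, Q_1)$ at iteration $i,$ and second, $C_{ij}$ must not be corrupted by future iterations $i' > i.$ Both concerns reduce to showing $A_{i'}(P_{i-1}, Q_1) = 0$ for $i' < i$ and $A_i(P_{i'-1}, Q_1) = 0$ for $i' > i.$ In each case the $P$-coordinate falls outside the ``arbitrary'' row set $P_{<i'}$ (or $P_{<i}$) by disjointness of the $P_s,$ the $Q$-coordinate falls outside it by \eqref{eqn-Q1-Pj}, and neither index pair lies on the diagonal block of the relevant matrix; the semidefinite echelon structure then forces these blocks to vanish. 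Consequently modifying $X_j(P_{i-1}, Q_1)$ at iteration $i$ leaves $A_{i'} \bullet X_j$ unchanged for $i' < i,$ and the value of $C_{ij}$ computed at iteration $i$ remains valid as the algorithm continues. This closes the induction and proves correctness.
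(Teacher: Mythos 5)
Your proof is correct and takes essentially the same approach as the paper: at iteration $i$, zero out (or treat as unknown) the $(P_{i-1},Q_1)$ block and its transpose, reduce the row-$i$ target equations to the underdetermined bilinear system of Lemma~\ref{bilinlemma}, symmetrize, and then verify via the echelon support structure that previously set and future entries do not interfere. Two small remarks on presentation: the paper only argues ``backward'' non-interference at each iteration (that $A_t \bullet X_j$ for $2 \le t < i$ is untouched by the modification of $X_j(P_{i-1},Q_1)$, using the explicit cover $\I_1,\I_2,\I_3$ of the support of $A_t$), which by induction already gives what you phrase as the separate ``forward'' condition $A_i(P_{i'-1},Q_1)=0$ for $i'>i$; and the vanishing of the $i=1$ and $j=1$ products, which you carefully check from the disjointness of $P_1$ (resp.\ $Q_1$) from the $Q_s$ (resp.\ $P_s$), is left implicit in the paper, so your version is actually slightly more complete on that point.
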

\begin{proof}
For brevity, we define $P_{k+2} := N \setminus (P_1 \cup \dots \cup P_{k+1})$ and  
we fix $i \in \{2, \dots, k+1\}.$ 
	To implement step  \ref{step}   
	of Algorithm \ref{base-eqn-algo} we first set the $(P_{i-1},Q_1)$ block of $A_i,X_2,\dots,X_{\ell+1}$ to zero,  then  introduce a target vector $(b_2, \dots, b_{\ell+1})^\top:$ 
	$$
	b_{j}=\begin{cases}
- \frac{1}{2}  	A_i \bullet X_j &\text{if $(i,j)\ne (k+1,\ell+1)$}\\
- \frac{1}{2} \bigl( A_i \bullet X_j + 1 \bigr) &\text{if $(i,j)= (k+1,\ell+1)$}. \\
	\end{cases}
	$$
	Next we invoke  Lemma \ref{bilinlemma} with $A_i(P_{i-1},Q_1)$ in place of $M$ and 
	$X_j(P_{i-1},Q_1)$ in place of $Y_j$ for $j=2, \dots, \ell+1.$ Finally we symmetrize $A_i$ and the $X_j,$ namely we set 
	 $$ A_i(Q_1, P_{i-1}) := A_i(P_{i-1},Q_1)^{\top} \text{and } X_j(Q_1, P_{i-1}) :=    X_j(P_{i-1}, Q_1)^{\top}$$ {for} $j=2, \dots, \ell+1.$ 
	
Suppose we perform 
	step \ref{step} with a certain $i \in \{2, \dots, k+1 \}$ to satisfy 
	the base equations \eqref{eqn-base} with left hand side
	$A_i \bullet X_j$ for $j=2, \dots, \ell+1.$ %are satisfied. 
	We next show that the previously satisfied equations remain true. 
	
If $i=2$ then there is nothing to show, so assume $i \geq 3.$ 
	Let us fix $t \in \{2, \dots, i-1 \}$ and 
	$j \in \{2, \dots, \ell+1 \}.$ We will show that the equation 
	with left hand side $A_{t} \bullet X_j$ remains satisfied. 
		For that, we note that the support of $A_t$ is contained in the blocks 
	\begin{equation} \label{eqn-blocks} 
	\begin{array}{rcl}
	\I_1 & := & (P_1 \cup \dots \cup P_{t}, P_1 \cup \dots \cup P_{t}) \\
	\I_2 & := & (P_1 \cup \dots \cup P_{t-1}, P_{t+1} \cup \dots \cup P_{k+2}) \\
	\I_3 & := & (P_{t+1} \cup \dots \cup P_{k+2}, P_1 \cup \dots \cup P_{t-1}) 
	\end{array}
	\end{equation}
	cf. Definition \ref{definition-reform}. 
	In iteration $i$ we change $X_j(P_{i-1}, Q_1)$ and $X_j(Q_1, P_{i-1})$ for $j=2, \dots, \ell+1.$ So it suffices to show 
	\begin{eqnarray} \label{empty-1} 
	(P_{i-1}, Q_1) \cap \I_1 & = & \emptyset \\ \label{empty-2} 
	(P_{i-1}, Q_1) \cap \I_2 & = & \emptyset \\ \label{empty-3} 
	(P_{i-1}, Q_1) \cap \I_3 & = & \emptyset.
	\end{eqnarray}
	Indeed, \eqref{empty-1} and 	\eqref{empty-3} follow by \eqref{eqn-Q1-Pj}. Further, \eqref{empty-2} follows from $t-1 < i-1$ and the proof is complete.

\end{proof}
	
In summary, we have  the following theorem.
\begin{theorem} \label{theorem-constructall} 
	Algorithm \ref{weaksdpalgo}  always correctly constructs 
	 a weakly infeasible SDP
	 and any weakly infeasible SDP of the form 	\eqref{p} is among its  outputs.
\end{theorem}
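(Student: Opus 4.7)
}

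The plan is to treat the two assertions separately and to use Theorem \ref{thm-weak-ref} in its two directions as the main engine. For correctness I would first verify that at the end of step \ref{alg-1}, the sequences $(A_1,\dots,A_{k+1})$ and $(X_1,\dots,X_{\ell+1})$ actually exist with the required properties: this is exactly Lemma \ref{prop-baseeqn} combined with Lemma \ref{bilinlemma}. Next, I would check that after steps \ref{alg-2} and \ref{alg-3} the full data $(A_1,\dots,A_m,b)$ satisfies hypotheses \ref{thm-weak-1} and \ref{thm-weak-2} of Theorem \ref{thm-weak-ref}: item \ref{thm-weak-1} is immediate from step \ref{alg-1}, and for item \ref{thm-weak-2} I would verify $\A X_i = 0$ for $i = 1,\dots,\ell$ and $\A X_{\ell+1} = b$ by splitting the rows into $i \le k+1$ (handled by the base equations) and $i > k+1$ (handled respectively by step \ref{alg-2} and step \ref{alg-3}). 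The ``if'' direction of Theorem \ref{thm-weak-ref}, already proved in Section \ref{section-mainresults}, then yields weak infeasibility of the pre-reformulation SDP, and since reformulation preserves feasibility status (see the preliminaries), step \ref{alg-4} outputs a weakly infeasible SDP.

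For completeness, I would start from an arbitrary weakly infeasible \eref{p}. By the ``only if'' direction of Theorem \ref{thm-weak-ref} (together with the remark ensuring that the positive definite blocks of $A_1',\dots,A_k'$ and $X_1,\dots,X_\ell$ are nonempty), there exist a reformulation $(A_1',\dots,A_m',b')$ and a sequence $(X_1,\dots,X_{\ell+1})$ witnessing items \ref{thm-weak-1} and \ref{thm-weak-2}. I would then show that this particular reformulation is a possible intermediate output of Algorithm \ref{weaksdpalgo}. The structural conditions \eref{eqn-P1-Qj} and \eref{eqn-Q1-Pj} needed for the initialization step of Algorithm \ref{base-eqn-algo} are supplied by Lemma \ref{pqempty}. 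The ``any solution is a possible outcome'' clause of Lemma \ref{bilinlemma} allows Algorithm \ref{base-eqn-algo} to set the blocks $A_i'(P_{i-1},Q_1)$ and $X_j'(P_{i-1},Q_1)$ to exactly the values appearing in the reformulation. Steps \ref{alg-2} and \ref{alg-3} can then choose $A_{k+2}',\dots,A_m'$ and the corresponding $b_i'$ to be exactly those of the reformulation, because those matrices already have zero inner product with $X_1,\dots,X_\ell$ by the structure in Figure \ref{figure-Ai-Xj-matrix}. Since reformulations are invertible, step \ref{alg-4} can be chosen to apply the inverse reformulation and thereby recover the original \eref{p}.

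The main bookkeeping obstacle I anticipate is verifying that the blocks modified by Algorithm \ref{base-eqn-algo} lie within the ``arbitrary'' regions permitted by the semidefinite echelon form of Theorem \ref{thm-weak-ref}'s reformulation. Concretely, I would need to argue that for each $j \ge 2$ the block $X_j(P_{i-1},Q_1)$ is contained in the freely-chosen region $X_j(Q_1 \cup \dots \cup Q_{j-1},N)$ --- which holds via symmetry since $Q_1 \subseteq Q_1 \cup \dots \cup Q_{j-1}$ --- and that the analogous statement is true for $A_i(P_{i-1},Q_1)$. This combinatorial compatibility is precisely what ties the bilinear freedom of Lemma \ref{bilinlemma} to the rigidity of the echelon structure, and once it is in hand the rest of the argument is routine.
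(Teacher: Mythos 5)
Your proposal is correct and follows essentially the same approach as the paper: correctness via Lemmas \ref{pqempty}--\ref{prop-baseeqn} together with the ``if'' direction of Theorem \ref{thm-weak-ref}, and completeness via the ``only if'' direction of Theorem \ref{thm-weak-ref} together with the ``any solution is a possible outcome'' clause of Lemma \ref{bilinlemma}. Your closing observation that the blocks $X_j(P_{i-1},Q_1)$ and $A_i(P_{i-1},Q_1)$ must lie inside the freely-specifiable regions of the semidefinite echelon form is left implicit in the paper's very terse proof, and your resolution of it (via $Q_1 \subseteq Q_1 \cup \dots \cup Q_{j-1}$ for $j \ge 2$ and $P_{i-1} \subseteq P_1 \cup \dots \cup P_{i-1}$, together with symmetry) is correct.
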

\begin{proof}
Lemmas \ref{pqempty}--\ref{prop-baseeqn} imply that 	Algorithm \ref{weaksdpalgo}   always correctly outputs a weakly infeasible SDP. On the other hand, suppose  \eqref{p} is weakly infeasible.
Then \eqref{p} has a reformulation \eqref{equation-p-weak} as presented in Theorem  \ref{thm-weak-ref}.
For simplicity, let us denote the  operator in   \eqref{equation-p-weak} by  $\A$ and represent  $\A$ with matrices 
$A_1, \dots, A_m.$  Also let us denote the right hand side in   \eqref{equation-p-weak} by    $b.$ 

Assume the first $k +1 \geq 2$ equations in   \eqref{equation-p-weak} prove it is infeasible. 
We know that \eqref{equation-p-weak} is not strongly infeasible, and we 
 let $(X_1, \dots, X_{\ell+1})$ be  the sequence 
that certifies this as presented in Theorem \ref{thm-weak-ref}. Recall that $\ell \geq 1.$

Suppose that  $(A_1, \dots, A_{k+1})$ has structure $\{P_1, \dots, P_{k+1}\}$ and 
$(X_1, \dots, X_{\ell+1})$ has structure 
$\{Q_1, \dots, Q_{\ell+1}\}.$ By the remark following Theorem \ref{thm-weak-ref} we can assume that $P_1, \dots, P_k$ and 
$Q_1, \dots, Q_\ell \,$ are nonempty.
Hence by Lemma \ref{bilinlemma} we have that  $(A_1, \dots, A_{k+1})$ and 
$(X_1, \dots, X_{\ell+1})$ are possible outputs of Algorithm \ref{base-eqn-algo}. 
Besides, 
$A_{k+2}, \dots, A_m$ and $b_{k+2}, \dots, b_m$ 
are possible outputs of steps \ref{alg-2} and \ref{alg-3} in Algorithm \ref{weaksdpalgo}.
Because of the reformulation step  \ref{alg-4} we see that \eqref{p} is a possible output of 
Algorithm \ref{weaksdpalgo}, and the proof is complete.

\end{proof}

As a quick check, Algorithm \ref{weaksdpalgo}  constructs a variant of \eqref{problem-small} as follows.
First it sets $m=2, \, k=\ell=1, \, P_1 = \{1\}, \, Q_1 = \{2\}, \, P_2 = Q_2 = \emptyset$ and 
$$
A_1 \, = \, \begin{pmatrix}  1 & 0 \\ 0 & 0 \end{pmatrix}, \, A_2 \, = \, \begin{pmatrix} 0 & \alpha \\ \alpha &  0  \end{pmatrix},  \, X_1 = \begin{pmatrix} 0 & 0 \\ 0 & 1 \end{pmatrix},  \, X_2 = \begin{pmatrix} 0 & \beta \\ \beta & 0 \end{pmatrix}, 
$$ 
where $\alpha$ and $\beta$ are arbitrary. Then the subroutine Algorithm \ref{base-eqn-algo} sets $\alpha$ and $\beta$ to satisfy $2 \alpha \beta = -1.$ 
 Algorithm \ref{weaksdpalgo} in step \ref{alg-1.5} sets  $b = (0, -1)^\top. $ Then it  skips 
 steps \ref{alg-2} and \ref{alg-3} (since $m = k+1$) and also skips the reformulation of step \ref{alg-4}.

\begin{example} \label{example-3by3} 
	Let $k= \ell = 1, \, P_1 = \{1\}, \, P_2 = \{2\}, \, Q_1 = \{3\}, Q_2 = \{2\}, \, \alpha$ and $\beta$ be arbitrary reals, and 
	$$
	A_1 = \begin{pmatrix} 1 & 0 & 0 \\
	                    0 & 0 & 0 \\
	                    0 & 0 & 0 \end{pmatrix}, \, A_2 =  \begin{pmatrix} 0 & 0 & \alpha \\
	                                                                         0 & 1 & 0 \\
	                                                                         \alpha & 0 & 0 \end{pmatrix}, \,  X_1 = \begin{pmatrix} 0 & 0 & 0 \\
	                                                                         0 & 0 & 0 \\
	                                                                         0 & 0 & 1 \end{pmatrix}, \, X_2 =  \begin{pmatrix} 0 & 0 & \beta \\
	                                                                          0 & 1 & 0 \\
	                                                                         \beta & 0 & 0 \end{pmatrix}.
	$$
Our algorithms  construct a weakly infeasible SDP from this data 
as follows.   Algorithm \ref{base-eqn-algo} sets $\alpha$ and $\beta$ so that 
$\alpha \beta = -1.$ After this the $A_i$ and $X_j$ satisfy the 	\eqref{eqn-base}  equations.  
Then Algorithm \ref{weaksdpalgo}  in step \ref{alg-1.5} sets $(b_1, b_2) = (0,-1)$ and in step 
\ref{alg-2} it chooses 
	$$
	A_3 : =  \begin{pmatrix} 0 & 1 & 0 \\
	1 & 0 & 1 \\
	0 & 1 & 0 \end{pmatrix},  
	$$
	 as it has zero inner product with $X_1.$ 
	Finally, in step \ref{alg-3} it sets $b_3 := A_3 \bullet X_2 =   0$  and skips the reformulation of step \ref{alg-4}. 
	\end{example}
We note that a scheme to construct weakly infeasible SDPs was given in \cite{liu2017exact}. That scheme is somewhat similar to the one presented here, as it uses a sequence of $A_i$ matrices to certify infeasibility and a  sequence of $X_j$ matrices to certify not strong infeasibility.  However, the  scheme in \cite{liu2017exact} assumes that the positive definite blocks of the $A_i$ and $X_j$ matrices do not overlap, so the variety of weakly infeasible SDPs it can construct is limited. In particular, it cannot even construct the small SDP of Example \ref{example-3by3}. 

On the other hand, our Algorithm \ref{weaksdpalgo}, that generates {\em any}  weakly infeasible SDP,  relies on Theorem 
\ref{thm-weak-ref}; that result is simple to state, but somewhat technical to prove. 

\begin{example} (Example \ref{example-large} continued) 
We now show how Algorithm \ref{weaksdpalgo} constructs the SDP in Example \ref{example-large}. 	
We first set $ m=4, k = \ell = 2, \,$ and select  $(A_1,A_2,A_3)$ and $(X_1,X_2,X_3)$ in semidefinite echelon form shown below: 
\begin{figure}[H]
	\begin{center} 
		\begin{tikzpicture}[scale=0.5, every node/.style={scale=0.6}]
    
    % create a helper function to draw a matrix
    % where each block is 1 unit wide.
    %
    % #1 - south west x coord of the matrix
    % #2 - south west y coord of the matrix
    % #3 - number of rows
    \newcommand{\tixmat}[3]{
    % draw the horizontal lines of the matrix
    \foreach \x in {2,...,#3} \draw[] (\x-1+#1,#2) to (\x-1+#1,#2+#3);
    % draw the vertical lines of the matrix
    \foreach \y in {2,...,#3} \draw[] (#1,\y-1+#2) to (#1+#3,\y-1+#2);
    % draw the rectangle enclosing the matrix
    \draw[] (#1,#2) rectangle (#1+#3,#2+#3);
    }
    
    % draw our 4 empty matrix boxes with colors
    \fill[red!60] (0,3) rectangle (1,4);
    
    \fill[cyan!80] (5+0,0) rectangle (5+1,4);
    \fill[cyan!80] (5+1,3) rectangle (5+4,4);
    \fill[red!60] (5+1,2) rectangle (5+2,3);
    
    \fill[cyan!80] (10+0,0) rectangle (10+2,4);
    \fill[cyan!80] (10+2,2) rectangle (10+4,4);
    \fill[red!60] (10+2,1) rectangle (10+3,2);
    
    \fill[red!60] (3,-6) rectangle (4,-6+1);
    
    \fill[cyan!80] (5+0,-6+0) rectangle (5+4,-6+1);
    \fill[cyan!80] (5+3,-6+1) rectangle (5+4,-6+4);
    \fill[red!60] (5+1,-6+2) rectangle (5+2,-6+3);
    
    \fill[cyan!80] (5*2+0,-6+0) rectangle (5*2+4,-6+1);
    \fill[cyan!80] (5*2+0,-6+2) rectangle (5*2+4,-6+3);
    \fill[cyan!80] (5*2+1,-6+0) rectangle (5*2+2,-6+4);
    \fill[cyan!80] (5*2+3,-6+0) rectangle (5*2+4,-6+4);
    \fill[red!60] (5*2+2,-6+1) rectangle (5*2+3,-6+2);
    
    \tixmat{0}{0}{4};
    \tixmat{5}{0}{4};
    \tixmat{10}{0}{4};
    
    \tixmat{0}{-6}{4};
    \tixmat{5}{-6}{4};
    \tixmat{10}{-6}{4};
    
    \node[] at (2+5*2,-0.75) {\LARGE $A_3$};
    \node[] at (2+5,-0.75) {\LARGE $A_2$};
    \node[] at (2,-0.75) {\LARGE $A_1$};
    
    \node[] at (2+5*2,-0.75-6) {\LARGE $X_3$};
    \node[] at (2+5,-0.75-6) {\LARGE $X_2$};
    \node[] at (2,-0.75-6) {\LARGE $X_1$};
    
    \draw[decoration={brace,raise=1.25pt},decorate] (0,3.05) -- node[left=6.75pt] {\Large $P_1$} (0,3.95);
    \draw[decoration={brace,raise=1.25pt},decorate] (0,2.05) -- node[left=6.75pt] {\Large $P_2$} (0,2.95);
    \draw[decoration={brace,raise=1.25pt},decorate] (0,1.05) -- node[left=6.75pt] {\Large $P_3$} (0,1.95);
    
    \draw[decoration={brace,raise=1.25pt},decorate] (0,0.05-6) -- node[left=6.75pt] {\Large $Q_1$} (0,0.95-6);
    \draw[decoration={brace,raise=1.25pt},decorate] (0,2.05-6) -- node[left=6.75pt] {\Large $Q_2$} (0,2.95-6);
    \draw[decoration={brace,raise=1.25pt},decorate] (0,1.05-6) -- node[left=6.75pt] {\Large $Q_3$} (0,1.95-6);
    
    \node[] at (0.5,3.5) {\LARGE $1$};
    
    \node[] at (5+2.5,3.5) {\LARGE $-2$};
    \node[] at (5+3.5,3.5) {\LARGE $*$};
    \node[] at (5+0.5,1.5) {\LARGE $-2$};
    \node[] at (5+0.5,0.5) {\LARGE $*$};
    \node[] at (5+1.5,2.5) {\LARGE $1$};
    \node[] at (5+0.5,2.5) {\LARGE $0$};
    \node[] at (5+0.5,3.5) {\LARGE $2$};
    \node[] at (5+1.5,3.5) {\LARGE $0$};
    
    \node[] at (10+3.5,2.5) {\LARGE $*$};
    \node[] at (10+1.5,0.5) {\LARGE $*$};
    \node[] at (10+2.5,1.5) {\LARGE $1$};
    \node[] at (10+0.5,0.5) {\Large $3/2$};
    \node[] at (10+0.5,1.5) {\LARGE $1$};
    \node[] at (10+0.5,2.5) {\LARGE $3$};
    \node[] at (10+0.5,3.5) {\LARGE $0$};
    \node[] at (10+1.5,3.5) {\LARGE $3$};
    \node[] at (10+2.5,3.5) {\LARGE $1$};
    \node[] at (10+3.5,3.5) {\Large $3/2$};
    \node[] at (10+2.5,2.5) {\LARGE $4$};
    \node[] at (10+1.5,1.5) {\LARGE $4$};
    \node[] at (10+1.5,2.5) {\LARGE $1$};
    
    \node[] at (3.5,0.5-6) {\LARGE $1$};
    
    \node[] at (1+5.5,2+0.5-6) {\LARGE $1$};
    \node[] at (0+5.5,0+0.5-6) {\LARGE $*$};
    \node[] at (1+5.5,0+0.5-6) {\LARGE $*$};
    \node[] at (2+5.5,0+0.5-6) {\LARGE $0$};
    \node[] at (3+5.5,0+0.5-6) {\LARGE $-4$};
    \node[] at (3+5.5,1+0.5-6) {\LARGE $0$};
    \node[] at (3+5.5,2+0.5-6) {\LARGE $*$};
    \node[] at (3+5.5,3+0.5-6) {\LARGE $*$};
    
    \node[] at (2+10.5,1+0.5-6) {\LARGE $1$};
    \node[] at (0+10.5,0+0.5-6) {\LARGE $*$};
    \node[] at (1+10.5,0+0.5-6) {\LARGE $*$};
    \node[] at (2+10.5,0+0.5-6) {\LARGE $-4$};
    \node[] at (3+10.5,0+0.5-6) {\LARGE $2$};
    \node[] at (3+10.5,1+0.5-6) {\LARGE $-4$};
    \node[] at (3+10.5,2+0.5-6) {\LARGE $*$};
    \node[] at (3+10.5,3+0.5-6) {\LARGE $*$};
    \node[] at (2+10.5,2+0.5-6) {\LARGE $-2$};
    \node[] at (1+10.5,2+0.5-6) {\LARGE $0$};
    \node[] at (0+10.5,2+0.5-6) {\LARGE $4$};
    \node[] at (1+10.5,3+0.5-6) {\LARGE $4$};
    \node[] at (1+10.5,1+0.5-6) {\LARGE $-2$};
    
    \end{tikzpicture}
	\end{center}
\end{figure}
\vspace{-0.5cm}
Here at the start the entries marked by  $*$ are arbitrary. 
Algorithm   \ref{base-eqn-algo} first ensures $A_2 \bullet X_2 = A_2 \bullet X_3 = 0 $ by
setting $$ A_2(1,4)=1/2, \quad X_2(1,4)=-1, \quad X_3(1,4)=0.$$ 
Then it ensures  
$A_3 \bullet X_2=0,A_3 \bullet X_3=-1$ by setting 
$$ A_3(2,4)=1, \quad X_2(2,4) = 1, \quad X_3(2,4)=-5.$$ (It also sets the symmetric blocks, 
for example, it sets $A_2(4,1)=1/2$ and so on.) 

Next, Algorithm   \ref{weaksdpalgo} in step \ref{alg-1.5} 
sets $(b_1, b_2, b_3) = (0,0,-1),$ then  in step \ref{alg-2} it selects 
$$
A_4 := \frac{1}{2}\begin{pmatrix}
-1 & -2 & -1 & 3 \\
-2 & 2 & -1 & 2 \\
-1 & -1 & 0 & -1 \\
3 & 2 & -1 & 0
\end{pmatrix}
$$
as it is  orthogonal to $X_1$ and $X_2.$ Then in step \ref{alg-3} it  sets  $b_4 = A_4 \bullet X_3 = -12.$ 
We thus get the 
weakly infeasible instance of Example \ref{example-large}
with the $A_i$ and $X_j$ shown above and 
$b = (0,0, -1, -12)^\top.$ 

\end{example} 

\section{Proofs: certificates of infeasibility and not strong infeasibility separately}
\label{section-certificate-separately}

In this section we construct two distinct reformulations of \eqref{p}: one to certify  it is infeasible, and the second to certify it is not strongly infeasible. Lemma \ref{thm-inf} already appeared 
as part (1) of Theorem 5  in \cite{liu2017exact} and 
Lemma \ref{thm-notstrong} as part (2) of Theorem 5 in 
\cite{liu2017exact}. An earlier version of the  latter result appeared in \cite{lourencco2016structural}. Here we give shorter and more elementary proofs. 

We first state a necessary condition for a semidefinite system to be infeasible.
Lemma \ref{lemma-infeas} is a slightly stronger version of Lemma 3.2 in Waki and Muramatsu 
\cite{WakiMura:12}. 

\begin{lemma}\label{lemma-infeas}   
	Suppose $B \in \sym{n}$ and $L \subseteq \symn$ is a subspace such that $(B + L  )\cap\psdn=\emptyset.$
	Then the  following hold:
	\begin{enumerate}
		\item 	\label{lemma-infeas-1} The system \eqref{equation-YBalt} is feasible: % for some $Y$.
		\begin{equation}\label{equation-YBalt} 
		\begin{array}{rl}
		B \bullet Y & \leq 0\\
		Y & \in   L^\perp\cap \bigl( \psdn\setminus\{0\} \bigr). 
		\end{array}
		\end{equation}
	\item 	\label{lemma-infeas-2}  	If \eqref{equation-YBalt}  has a positive definite solution, then it has a positive definite solution $Y$ such that $B \bullet Y < 0.$
	\end{enumerate}
\end{lemma}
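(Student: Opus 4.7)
For Part (\ref{lemma-infeas-1}), my plan is to apply the separating hyperplane theorem to the nonempty disjoint convex sets $B + L$ and $\psdn$, which produces a nonzero $Y \in \symn$ and $\alpha \in \rad{}$ such that
\[ Y \bullet Z \;\leq\; \alpha \;\leq\; Y \bullet X \qquad \text{for all } Z \in B + L \text{ and } X \in \psdn. \]
I would then read off the required structure: taking $X = 0$ in the right inequality gives $\alpha \leq 0$, and replacing $X$ by $tZ'$ for $Z' \succeq 0$ and letting $t \to +\infty$ forces $Y \bullet Z' \geq 0$ for every $Z' \succeq 0$, so $Y \succeq 0$. For the left inequality, the map $z \mapsto Y \bullet (B+z)$ is bounded above on the subspace $L$, hence must be constant, which yields $Y \in L^\perp$ and $Y \bullet B \leq \alpha \leq 0$. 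Nonzeroness of $Y$ comes directly from the separation theorem, so $Y$ is feasible in \eqref{equation-YBalt}.

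For Part (\ref{lemma-infeas-2}), suppose \eqref{equation-YBalt} admits a positive definite solution $Y^*$ but that no positive definite solution satisfies $B \bullet Y < 0$. Then the linear functional $f(Y) := B \bullet Y$ is nonnegative on $L^\perp \cap \pdn$ and, since $Y^*$ itself is a feasible pd point, $f(Y^*) = 0$. The key observation is that $Y^*$ is a relative interior point of $L^\perp \cap \pdn$ inside $L^\perp$: for every $Z \in L^\perp$, the matrix $Y^* \pm \epsilon Z$ lies in $L^\perp \cap \pdn$ for all sufficiently small $\epsilon > 0$, and evaluating $f$ at those two perturbations yields $\pm\,\epsilon\, (B \bullet Z) \geq 0$. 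Hence $B \bullet Z = 0$ for every $Z \in L^\perp$, so $B \in (L^\perp)^\perp = L$. But then $0 = B - B \in B + L$, and $0 \in \psdn$ contradicts the hypothesis $(B + L) \cap \psdn = \emptyset$.

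The genuine content of the lemma is the observation in Part (\ref{lemma-infeas-2}) that a linear functional on $L^\perp$ which is nonnegative on the relatively open cone $L^\perp \cap \pdn$ and vanishes at a point of it must vanish on all of $L^\perp$; the rest is an exercise in convex separation. I expect the only mild pitfall to be in Part (\ref{lemma-infeas-1}), where one must carefully verify that the separating hyperplane simultaneously encodes all four structural facts ($Y \neq 0$, $Y \succeq 0$, $Y \in L^\perp$, $B \bullet Y \leq 0$); this is handled by the sequence of substitutions described above, exploiting in turn that $\psdn$ is a cone containing $0$ and that $L$ is a linear subspace.
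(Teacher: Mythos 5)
Your proof is correct, and the route you take is genuinely different from the paper's, most notably in Part~(\ref{lemma-infeas-1}). The paper proves Part~(\ref{lemma-infeas-1}) by constructing an auxiliary SDP $\sup\{y_0 : -By_0 - \B y \preceq 0\}$, arguing its optimal value is zero, and then splitting into cases depending on whether Slater's condition holds (introducing a second auxiliary SDP with a $tI$ perturbation when it does not); this is a deliberate design choice, since the authors want all proofs to rest on the small set of SDP-duality facts recalled in Section~\ref{section-prelim}. You instead separate the disjoint convex sets $B+L$ and $\psdn$ directly. This is shorter and avoids the Slater case split, but it tacitly invokes the separation theorem for two disjoint convex sets that need not be strongly separable (indeed, in the weakly infeasible setting the two sets are at distance zero). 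That separation is in general only proper, not strong; here it suffices because $\psdn$ is full-dimensional, hence cannot lie inside a hyperplane, so the separating functional is nonzero and your four substitutions (taking $X=0$, $X=tZ'$, varying over the subspace $L$, and finally $Z=B$) correctly extract $\alpha \leq 0$, $Y \succeq 0$, $Y \in L^\perp$, and $B\bullet Y \leq 0$. For Part~(\ref{lemma-infeas-2}), your argument is essentially the contrapositive of the paper's: the paper observes directly that the hypothesis forces $B \notin L$, picks $Y' \in L^\perp$ with $B\bullet Y' < 0$, and perturbs the given positive definite solution; you suppose no such perturbation works, conclude via the relative-interior argument that $B\bullet Z = 0$ for all $Z\in L^\perp$, hence $B \in L$, and then exhibit $0 \in (B+L)\cap\psdn$. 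Both rest on the same observation; the paper's version is a line shorter and constructive, but yours is equally valid.
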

\begin{proof}  To prove item \ref{lemma-infeas-1}, let $\B : \rad{m} \rightarrow \sym{n}$ be a linear map whose rangespace is $L.$ We claim that the optimal value of the SDP 
\begin{equation} \label{dref1}
\begin{array}{rl}
\sup  &  y_0 \\
s.t.    &  - B y_0 - \B y \preceq 0 
\end{array} 
\end{equation}
is zero. Indeed, its optimal value is nonnegative, since $(y, y_0) = (0, 0)$ is feasible in it. 
On the other hand, the optimal value cannot be positive: if $y_0 > 0$ were feasible with some $y, \, $ then we would get the contradiction $B + \frac{1}{y_0} \B y \succeq 0. \, $  

First assume that \eref{dref1} satisfies Slater's condition. Then its dual (which is of the form 
\eqref{p-opt}), is feasible. Any  $Y$ feasible in the dual of  \eqref{dref1} satisfies 
$$
B \bullet Y  = -1, \quad \B^* Y = 0,
$$
as required.

Second, assume that \eref{dref1} does not  satisfy  Slater's condition.  We claim that the optimal value of the SDP 
\begin{equation} \label{equation-supt} 
\begin{array}{rl}
\sup  &  t  \\
s.t.    &  t  I - B y_0 - \B y \preceq 0 
\end{array} 
\end{equation}
is zero. Indeed,  it is nonnegative since setting all variables to zero we obtain a feasible  solution. On the other hand if $t  > 0$ were feasible with some $y_0  $ and $y, \, $ then the contradiction $B y_0 + \B y \succeq t I \succ 0 \, $ would follow.  

Note that 
\eqref{equation-supt} does satisfy  Slater's condition with $t=-1, \, y = 0, \, $ and $y_0 = 0.$ Thus  there is a $Y$ feasible in the  dual of \eqref{equation-supt}, which satisfies 
$$
B \bullet Y  = 0 , \quad \B^* Y = 0, \quad  I \bullet Y = 1,
$$
as required.

To prove item \ref{lemma-infeas-2}, we observe that $(B + L  )\cap\psdn=\emptyset$ implies  $B \not \in L. \, $ So there is $Y' \in L^\perp$ such that $$B \bullet Y'  < 0.$$ Suppose 
	\eqref{equation-YBalt} has a 
	positive definite solution. Then we  add a sufficiently small positive multiple of $Y'$ to it and 
 obtain a 
	$Y$ positive definite feasible solution such that \mbox{$B \bullet Y < 0.$} The proof is now complete.

\end{proof}

\begin{lemma}\label{thm-inf}
The SDP \eqref{p} is infeasible if and only if it has a reformulation
\begin{equation}\label{equation-p-infeas} \tag{$\mathrm{P}_{\rm infeas}$} 
\begin{array}{rcl} 
\A' X & = & b' \\  
X & \succeq & 0
\end{array}
\end{equation}
in which  $(A_1^{\prime},\dots,A_{k+1}')$ is in semidefinite echelon form 
and 
$(b_1',\dots,b_k',b_{k+1}') = (0, \dots, 0, -1)$   for some $k \geq 0.$ 
\end{lemma}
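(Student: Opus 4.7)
The `if' direction follows the same template as the `if' direction of Theorem \ref{thm-weak-ref}: if $X \succeq 0$ satisfies the first $k$ equations of \eqref{equation-p-infeas}, then $A_1' \bullet X = 0$ together with the echelon shape of $A_1'$ forces the rows and columns of $X$ indexed by $P_1$ to vanish, and iterating with $A_2', \dots, A_k'$ kills the rows and columns indexed by $P_1 \cup \dots \cup P_k$. The last inner product $A_{k+1}' \bullet X$ therefore reduces to a strictly positive combination of the diagonal entries of $X(P_{k+1})$, hence is nonnegative, contradicting $A_{k+1}' \bullet X = -1$.

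For the `only if' direction I would induct on $n$. If the affine system $\A X = b$ has no solution in $\symn$ at all, elementary linear algebra supplies $y \in \rad{m}$ with $\A^* y = 0$ and $b^\top y = -1$; the row operation with coefficients $y$ then reduces one of the equations to $0 \bullet X = -1$, giving the echelon form with $k = 0$ and $P_1 = \emptyset$. Otherwise pick $B \in \symn$ with $\A B = b$ and set $L = \N(\A) \cap \symn$; then $(B + L) \cap \psdn = \emptyset$, and item \ref{lemma-infeas-1} of Lemma \ref{lemma-infeas} supplies a nonzero $Y \in L^\perp \cap \psdn$ with $B \bullet Y \leq 0$. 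Since $L^\perp = \R(\A^*)$, we may write $Y = \A^* y$ and observe $b^\top y = B \bullet Y$. An orthogonal congruence brings $Y$ into the form $D \oplus 0$ with $D$ a positive definite diagonal matrix indexed by a nonempty $P \subseteq N$, and after applying this congruence together with the row operation with coefficients $y$, one of the equations reads $(D \oplus 0) \bullet X = b^\top y$. If $b^\top y < 0$, a rescaling of this equation yields the desired echelon form with $k = 0$ and $P_1 = P$. If instead $b^\top y = 0$, the constraint $(D \oplus 0) \bullet X = 0$ forces every feasible $X \succeq 0$ to vanish on the rows and columns indexed by $P$; restricting the remaining $m - 1$ equations to the $(N \setminus P) \times (N \setminus P)$ block produces a strictly smaller infeasible SDP, since any feasible solution there would zero-pad back to a feasible solution of the original system. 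The induction hypothesis then supplies an echelon reformulation of this reduced SDP with structure $\{P_2, \dots, P_{k+1}\}$ inside $N \setminus P$, and prepending the equation $A_1' := D \oplus 0$, $b_1' := 0$ completes the construction.

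The main obstacle is the bookkeeping required in the recursive case: one must check that the lifted congruence $T = I_P \oplus T'$, combined with the block-triangular row-operation matrix that fixes equation 1 and acts by the inner reformulation on equations $2, \dots, m$, genuinely produces matrices fitting the template of Definition \ref{definition-regfr} with structure $\{P, P_2, \dots, P_{k+1}\}$. The key observation is that with $P_1 = P$ preceding the inner structure sets, Definition \ref{definition-regfr} permits arbitrary entries in any row or column indexed by $P$, so the only nontrivial zero requirement lies inside the $(N \setminus P) \times (N \setminus P)$ block, where by induction the lifted matrices coincide with the inner output and are already in echelon form.
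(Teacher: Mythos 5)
Your proof follows essentially the same route as the paper's: the ``if'' direction is the same positivity cascade, and the ``only if'' direction uses the same case split on solvability of the affine system, the same appeal to Lemma~\ref{lemma-infeas} to extract a separating $Y \in L^\perp \cap \psdn$, the same orthogonal congruence to diagonalize $Y$, and the same deletion-and-recursion step when the right-hand side vanishes. The bookkeeping observation you make at the end (that Definition~\ref{definition-regfr} permits arbitrary entries in rows and columns indexed by $P_1$, so only the inner block carries a nontrivial requirement) is exactly the reason the lift works, and is also implicit in the paper.

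One thing you omit is the appeal to item~\ref{lemma-infeas-2} of Lemma~\ref{lemma-infeas}. You invoke only item~\ref{lemma-infeas-1}, which produces a nonzero $Y \succeq 0$ with $B \bullet Y \leq 0$, and then recurse when $B \bullet Y = 0$. But nothing rules out $Y \succ 0$, in which case $P = N$ and your ``strictly smaller'' SDP lives in order $n - |P| = 0$. That degenerate recursion can be made to work (the reduced system is $0 = b_i'$ for $i \geq 2$, and one can check some $b_i'$ is nonzero because $b \neq 0$ and $y_1 \neq 0$, so it lands in your base case), but it needs to be said, along with the observation that $m > 1$ whenever $b^\top y = 0$. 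The paper avoids all of this by using item~\ref{lemma-infeas-2}: if $Y$ can be taken positive definite, it can be taken with $B \bullet Y < 0$, landing you in the $k = 0$ terminal case; hence whenever you recurse you are guaranteed $0 < \operatorname{rank} Y < n$. You should either add the appeal to item~\ref{lemma-infeas-2} or spell out the $n = 0$ and $m = 1$ edge cases explicitly.
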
 
   		
\begin{proof}

The ``if" direction is given verbatim in the proof of the ``if" direction in Theorem \ref{thm-weak-ref}. 
 For the ``only if" direction, by elementary row operations (operations \ref{exch} and \ref{trans} in Definition \ref{definition-reform}) we will first  achieve the following:
	\begin{equation} \label{eqn-toachieve1}   
A_1  \,  \succeq \,   0,  \;   b_1 \, \in \{ 0, -1 \}          
\end{equation}
and 
\begin{equation} \label{eqn-toachieve2}   
b_1   \, = \, 0   \;    \Rightarrow  \; 0 \, <  \, \myrank A_1 \, < \, n. 
\end{equation}
For that,  we distinguish two cases.
\begin{case}[\textit{The linear system $\A X =b$ is infeasible}] \label{case1} \normalfont
	Then by elementary linear algebra there is $y$ such that 
	$$\A^* y = 0, \quad b^{\top} y = -1.$$ 
	We have $y \neq 0$ so after permuting the equations in $\A X = b$ we assume $y_1 \neq 0$ without loss of generality.  
	Next, using elementary row operations we replace $A_1$ by $\A^* y$ and $b_1$ by $b^\top y.$ As a result, 
	\eqref{eqn-toachieve1} and  	\eqref{eqn-toachieve2}    hold.
	\end{case}
	\begin{case}[\textit{The linear system $\A X =b$ is feasible}] \normalfont
 We first fix $X_0 \in \symn$ such that $\A X_0=b.$  Then we apply  Lemma \ref{lemma-infeas} with $L := \N(\A)$ and $B := X_0$ and find $Y$ feasible in the system \eqref{equation-YBalt}. Further, using item \ref{lemma-infeas-2} in Lemma \ref{lemma-infeas},  if $Y$
		is positive definite, we  ensure $X_0 \bullet Y < 0.$ 
 We have  $Y \in L^\perp, \, $ and $L^\perp = \R(\A^*), \,$ so we write $Y = \A^*y$ 
 for some $y \in \rad{m}$ and 
deduce 
$$
0 \, \geq \, X_0 \bullet Y \, = \, X_0 \bullet \A^*y \, =  \, (\A X_0)^\top y= b^{\top} y.
$$

We then proceed as in Case \ref{case1}: we permute the equations in $\A X = b,$ if needed, and 
replace $A_1$ by $\A^* y$ and $b_1$ by $b^\top y.$ Afterwards,  
 if $b_1 < 0$ 
then we rescale $A_1$ and $b_1$ so that $b_1 = -1. \, $
Again, \eqref{eqn-toachieve1} and \eqref{eqn-toachieve2} hold. 
\end{case} 

Now that we have satisfied \eqref{eqn-toachieve1} and \eqref{eqn-toachieve2}, we choose an invertible $T$ matrix such that $T^{\top} A_1 T = I_r \oplus 0$ for some $r \geq 0, \, $ %(of course we can choose $T := I$ if $A_1=0.$)  
and let  
\begin{equation}
A_i' := T^{\top} A_i T, \; b_i' := b_i  \;\; \text{for} \; i=1, \dots, m.
\end{equation}
If $b_1' = -1, \, $  we set $k=0$ and stop.  

If $b_1' = 0, \, $ then we must have $0 < r < n.$ We must also have $m > 1, \, $ otherwise 
the all zero matrix would be feasible in \eqref{p}.  We then delete the  
equation $ A_1'\bullet X = 0 \, $ and also delete the first $r$ rows and columns from  the other $A_i'.$ 
We thus obtain a smaller SDP,  say $\mathrm{(P')}, \, $  with $m-1$ equations and order $n-r$ matrices.
We see that  $\mathrm{(P')} \, $  is infeasible: if $X'$ were feasible in it, then $X := 0 \oplus X'$ would be feasible in \eqref{p}.
So we proceed by induction, as  a reformulation of $\mathrm{(P')} \, $ into the form of \eqref{equation-p-infeas} 
yields a reformulation of \eqref{p} into the same form.

\end{proof}

As a quick sanity check, we consider the SDP in  \eqref{problem-small}, and the reformulation given in Example
\ref{example-small-contd} (see equation \eqref{eqn-A1primeA2prime}). This reformulation is 
 in the form \eqref{equation-p-infeas}. Note that now $k=1.$

The proof of Lemma \ref{thm-inf} implies that the positive definite blocks in $A_1',\dots, A_k'$ are nonempty, and can be chosen as identity matrices. However, the positive definite block in $A_{k+1}'$ may be empty, as it is in the reformulated version of 
\eqref{problem-small} given in the previous paragraph.

We next present Lemma \ref{thm-notstrong} to construct a certificate that \eqref{p} is not strongly infeasible.
 Lemma \ref{thm-notstrong} builds on two ideas. First,  if \eqref{p} is not strongly infeasible, then the alternative system \eqref{p-alt} is infeasible. In turn, if \eqref{p-alt}  is infeasible, then using Lemma 
	\ref{thm-inf}  we will reformulate it to make its infeasibility evident. 

\begin{lemma}\label{thm-notstrong} The SDP 
    \eref{p} is not strongly infeasible if and only if     it has a reformulation 
    \begin{equation} \label{equation-p-notstrong}\tag{$\mathrm{P}_{\rm notstrong}$}  
    \begin{array}{rcl} 
    \A'' X & = & b'' \\  
    X & \succeq & 0,
    \end{array}
    \end{equation}
    such that for some   $(X_1,\dots,X_{\ell+1})$ in semidefinite echelon form 
    with $\ell \geq 0$ the following holds:  
    \begin{equation} \label{equation-p-notstrong-2}
    \begin{array}{ccl}
    \mathcal{A}^{\prime\prime}X_i &=& 0 \quad~ \text{for} ~~  i=1,\dots,\ell ~~ \text{and} \\ 
    \mathcal{A}^{\prime\prime}X_{\ell+1} &=& b^{\prime\prime}. 
    \end{array}
    \end{equation}
\end{lemma}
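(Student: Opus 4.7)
The plan for the \emph{``if''} direction is to repeat verbatim the argument used for the \emph{``if''} direction of Theorem \ref{thm-weak-ref}. Given $(X_1,\dots,X_{\ell+1})$ in semidefinite echelon form with $\A''X_i=0$ for $i\le\ell$ and $\A''X_{\ell+1}=b''$, I would perturb $X_{\ell+1}$ by a small psd matrix $X_\delta$ supported on the complement of $Q_1\cup\dots\cup Q_{\ell+1}$, and then inductively add large positive multiples $\gamma_\ell X_\ell,\dots,\gamma_1 X_1$ to obtain a psd matrix within distance $\epsilon$ of $H''=\{X:\A''X=b''\}$. This shows the reformulation \eqref{equation-p-notstrong}, and hence \eqref{p} itself, is not strongly infeasible.

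For the \emph{``only if''} direction, since \eqref{p} is not strongly infeasible, the alternative system \eqref{p-alt} is infeasible. The plan is to rewrite \eqref{p-alt} as an SDP in the form \eqref{p} and feed it to Lemma \ref{thm-inf}. After removing redundant equations to assume $A_1,\dots,A_m$ are linearly independent---which preserves the status of both \eqref{p} and \eqref{p-alt}, and which is harmless because $b\in\R(\A)$ (else \eqref{p} would be strongly infeasible)---I fix a basis $B_1,\dots,B_s$ of $\N(\A)\subseteq\symn$ and any $\tilde b\in\symn$ with $\A\tilde b=b$. I then claim that \eqref{p-alt} is equivalent to finding $Z\in\psdn$ with $B_j\bullet Z=0$ for all $j$ and $\tilde b\bullet Z=-1$: given a feasible $y$ of \eqref{p-alt}, the matrix $Z:=\A^*y$ works because $\tilde b\bullet Z=(\A\tilde b)^{\top}y=b^{\top}y=-1$; conversely, $B_j\bullet Z=0$ forces $Z\in\R(\A^*)$, so $Z=\A^*y$ for some $y$, and then $b^{\top}y=\tilde b\bullet Z=-1$.

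Applying Lemma \ref{thm-inf} to the resulting infeasible SDP yields a reformulation: elementary row operations produce pre-congruence matrices $V_i:=\sum_j\mu_{ij}B_j+\nu_i\tilde b$ with right-hand sides $-\nu_i$, and a congruence $T$ on the variable $Z$ produces $W_i:=T^{\top}V_iT$ forming a semidefinite echelon sequence with right-hand sides $(0,\dots,0,-1)$. Matching right-hand sides forces $\nu_i=0$ for $i\le k$ and $\nu_{k+1}=1$, so that $\A V_i=0$ for $i\le k$ and $\A V_{k+1}=\A\tilde b=b$. Now I reformulate \eqref{p} by the congruence $S:=T^{-\top}$ alone: set $A_i'':=S^{\top}A_iS$, $b'':=b$, $\ell:=k$, and $X_j:=W_j$ for $j=1,\dots,\ell+1$. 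A direct calculation gives $\A''X_j=\A(SX_jS^{\top})=\A V_j$, which equals $0$ for $j\le\ell$ and $b''$ for $j=\ell+1$, while $(X_1,\dots,X_{\ell+1})$ inherits the semidefinite echelon form structure from Lemma \ref{thm-inf}.

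The main delicate point will be the equivalent reformulation of \eqref{p-alt} as an SDP of the form \eqref{p}: the construction of $\tilde b$ (which is why we need $b\in\R(\A)$) and the bidirectional proof of equivalence. A second point requiring care is matching congruences: the congruence $T$ on $Z$ used inside Lemma \ref{thm-inf} must be paired with the reciprocal congruence $S=T^{-\top}$ on the matrices $A_i$ of \eqref{p}, so that the semidefinite echelon structure of the $W_i$ transports to a valid certificate $(X_1,\dots,X_{\ell+1})$ for the reformulation $\A''$ of \eqref{p}.
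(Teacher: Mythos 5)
Your proposal is correct and follows essentially the same route as the paper's own proof: the ``if'' direction is lifted verbatim from Theorem~\ref{thm-weak-ref}, and the ``only if'' direction rewrites the alternative system \eqref{p-alt} as a primal-form SDP over $\R(\A^*)$ (you via a basis of $\N(\A)$ and a preimage $\tilde b$ of $b$, the paper via an operator $\B$ with $\R(\A^*)=\N(\B)$ and $X_0$), applies Lemma~\ref{thm-inf} to that infeasible system, and transports the resulting echelon certificate back to a reformulation of \eqref{p} by the reciprocal congruence $T^{-\top}$. The only cosmetic difference is your preliminary reduction to linearly independent $A_i$, which is harmless but not actually needed since the final reformulation applies a congruence alone and never alters $m$.
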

\begin{proof} 

The proof of the ``if" direction is given in the proof of the ``if" direction in Theorem \ref{thm-weak-ref}. 

For the ``only if" direction, we assume that \eqref{p} is not strongly infeasible, and choose an operator $\B:\symn \to \rad{m}$ such that  
$\R(\A^*) = \N(\B).$ We also choose $X_0 \in \symn$ such that $\A X_0=b$ (such an $X_0$ must exist, otherwise 
\eqref{p} would be strongly infeasible).  
Since \eqref{p} is not strongly infeasible, the alternative system \eqref{p-alt}   is infeasible.
We claim that \eqref{p-alt} is equivalent to 
\begin{equation} \label{dalt0:2} 
\begin{array}{rcl}
\B Y  &=& 0 \\
 X_0 \bullet Y &=& -1 \\
Y &\succeq& 0.
\end{array}
\end{equation}
Indeed, by the choice of $\B$ we have that $\B Y=0$ for some $Y \in \symn$ iff 
$Y = \A^* y$ for some $y \in \rad{m}.$ For any such $Y$ and $y$ we see that
$$
b^{\top} y \, = \,  (\A X_0)^\top y  \, = \, X_0 \bullet Y ,
$$
and this proves that  \eqref{p-alt} and \eqref{dalt0:2} are equivalent. 

Thus, by Lemma \ref{thm-inf},  the system  \eqref{dalt0:2} has a reformulation of the form \eqref{equation-p-infeas}, in which for some $\ell \geq 0$ the first $\ell + 1$ equations prove the infeasibility.
These equations are of the  form  
\begin{equation} \label{eqn-Xj} 
\begin{array}{rcl}
X_j \bullet  Y & = & 0 \quad (j=1, \dots, \ell) \\
 X_{\ell+1} \bullet Y & = & -1, 
\end{array} 
\end{equation}
where $(X_1, \dots, X_{\ell+1})$ is in semidefinite echelon form. %a regularized facial reduction sequence.

Note that in \eqref{dalt0:2} the only equation with nonzero right hand side is $ X_0 \bullet Y = -1.\, $ 
Given that from \eqref{dalt0:2} we derived equations \eqref{eqn-Xj} by elementary row operations and by  congruence transformations, we see that  
\begin{equation} \label{eqn-Xj-form} 
\begin{array}{rcll} 
X_j  & \in & T^{\top} \R( \mathcal{B}^* ) T & \, \text{for} \, j=1, \dots, \ell  \\
X_{\ell+1} & \in & T^{\top} \bigl( \R( \mathcal{B}^*)  + X_0  \bigr) T %T^{\top} (B + X_{j+1}') T.
\end{array} 
\end{equation} 
for some invertible matrix $T.$ 

Observe that $\R( \mathcal{B}^* ) = \N(\A).$   Then from   \eqref{eqn-Xj-form} we deduce that 
for $i=1, \dots, m$  
$$
   A_i \bullet T^{-\top} X_j T^{-1} \, = \,  \left\{    \begin{array}{ll}  0 & \text{if} \quad j \in \{1, \dots, \ell \}  \\
 b_i    &  \text{if} \quad j = \ell + 1   
\end{array}
 \right.
$$
holds. We have $ A_i \bullet T^{-\top} X_j T^{-1} \, = \,  T^{-1} A_i T^{-\top} \bullet X_j$ for all $i.$ 
We define the operator $\A''$  as 
$$
\A''X  = ( A_1'' \bullet X, \dots, A_m'' \bullet X)^{\top},
$$
where $A_i^{\prime \prime}  = T^{-1} A_i T^{-\top}$ for all $i \, $ and let $b'' = b.$ 
We see that $\A'', b'', \, $ and the $X_1, \dots, X_{\ell+1}$ that we already defined satisfy the requirements of our lemma. 

\end{proof}

Yet again, consider the reformulation of  \eqref{problem-small} given in Example \ref{example-small-contd} and set $A_1'' = A_1',$ and $A_2'' = A_2',$ 
 and 
$X_1$ and $X_2$ as in \eqref{eqn-X1X2}. Then $(A_1'', A_2'')$ and $(X_1, X_2)$ with $b''=(0,-1)^{\top}$ satisfy the conclusions of Lemma \ref{thm-notstrong}. Note that now $\ell = 1.$

By the proof of Lemma \ref{thm-notstrong} the positive definite blocks in $X_1,\dots, X_\ell$ are nonempty, and can be chosen as identity matrices. However, the positive definite block in $X_{\ell+1}$ may be empty, as it is in the reformulated version of 
\eqref{problem-small} that we gave in the previous paragraph.

\section{Proof of Theorem \ref{thm-weak-ref}}
\label{section-proof-thm1}

Section \ref{section-certificate-separately} showed how to  produce a reformulation  \eqref{equation-p-infeas} to prove that \eqref{p}
is infeasible; and another reformulation   \eqref{equation-p-notstrong}     to prove  it is not strongly infeasible.
In this section we show that a single reformulation can accomplish both. This common reformulation was fairly straightforward to produce when we started with a simple  problem  like \eqref{problem-small}. 
In the general case we need a technical proof.

We first define operators  that transform a certain targeted block of a matrix.
To absorb Definition \ref{definition-IRG}  we need to recall the notation $M(R,S)$ and $M(R)$ for blocks of a matrix $M$ from the start of Section \ref{section-prelim}.

\begin{definition} \label{definition-IRG} 
	Suppose $R \subseteq N$ and $G$ is matrix of order $|R|$. The matrix $I_{R, G}$ is obtained from the $n \times n$ identity by replacing $I(R)$ by $G,$ i.e., by performing the following two steps: 
	\begin{eqnarray*}
			I_{R,G}  := I, \\ 
		I_{R,G}(R) := G. 
		\end{eqnarray*}
	\end{definition}
For example, if $n=4, \, R = \{1,4 \}, \, $ and $G  = \begin{pmatrix} 2 & 3 \\ 4 & 5 \end{pmatrix}, \, $ then 
$$
I_{R, G} \, = \, \begin{pmatrix} 2 & 0 & 0 & 3 \\
                              0 & 1  & 0 & 0 \\
                               0 & 0 & 1 & 0 \\
                               4 & 0 & 0 & 5 \end{pmatrix}. 
$$
Suppose $M\in\mathbb{R}^{n\times n}$ and $R \subseteq N$. Then the operation 
$$
M :=  M \cdot I_{R,G} 
$$
right multiplies by $G$ the columns of $M$ indexed by $R \,$ and leaves the rest of $M$ unchanged.

Given subsets $R_1, \dots, R_t $ of $N$ and indices $i$ and $j$ such that $1 \leq i \leq j \leq t$ we will use the following shorthand:  
\begin{equation} \label{eqn-RiRj} 
R_{i:j} := R_i \cup R_{i+1} \cup \dots \cup R_j.
\end{equation} 
We will often use a congruence transformation to put matrices into a convenient block diagonal form, bringing us to the following lemma: 

\begin{lemma} \label{lemma-X-rotate} 
	Suppose $X \in \sym{n}$ and $R_1, \dots, R_t$ are disjoint subsets of $N$ such that  
	$$
	X(R_{1:t}) \succeq 0. 
	$$
	Then there is an invertible matrix $T$ such that
	$$
	(T^{\top}XT)(R_{1:t})\text{\rm ~is nonnegative diagonal},
	$$
and  $T$ can be chosen as the product of $n\times n$ invertible matrices %to be of the form 
			$$
			T = I_{R_1, U_1} W_1 \dots I_{R_t, U_t} W_t, 
			$$
			where 
			\begin{enumerate}
				\item the $U_i$ are orthonormal matrices for all $i$.
				\item right multiplying an $n \times n$ matrix, say $M, \, $ 
				by $W_i$  adds multiples of columns in $M(N, R_i)$ to columns of $M(N, R_j)$ for some  $j$ indices in $\{i+1, \dots, t \}$.  
		\end{enumerate}
		\qed
\end{lemma}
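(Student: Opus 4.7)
The plan is to build $T$ iteratively, processing the blocks $R_1, R_2, \dots, R_t$ one at a time. At stage $i$, I spectrally diagonalize the current $(R_i,R_i)$ principal block with an orthonormal $U_i$ (this produces the factor $I_{R_i,U_i}$), and then clear the off-diagonal couplings $(R_i, R_{i+1:t})$ by column operations that add multiples of $R_i$-columns to $R_{i+1:t}$-columns (this produces the factor $W_i$). After $t$ stages, the $R_{1:t}$ principal submatrix is nonnegative diagonal. I will do this by induction on $t$.

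For the first stage, since $X(R_{1:t}) \succeq 0$, the principal submatrix $X(R_1)$ is psd; its spectral decomposition yields an orthonormal $U_1$ with $U_1^\top X(R_1) U_1 = \operatorname{diag}(d_1,\dots,d_{|R_1|})$ and $d_k \geq 0$. Let $X^{(1)} := I_{R_1,U_1}^\top X I_{R_1,U_1}$. Since congruence by $I_{R_1,U_1}$ only modifies the rows and columns indexed by $R_1$, the block $X^{(1)}(R_{1:t})$ is still psd, and its $(R_1,R_1)$ sub-block is $\operatorname{diag}(d_1,\dots,d_{|R_1|})$.

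Next, for each $k \in R_1$ with $d_k > 0$ I use the pivot $d_k$ to eliminate the $k$-th row of the current $(R_1,R_{2:t})$ block, by adding a suitable multiple of column $k$ to each column $\ell \in R_{2:t}$. For each $k \in R_1$ with $d_k = 0$, psd-ness of $X^{(1)}(R_{1:t})$ already forces the corresponding entries in row $k$ to vanish, because any $2 \times 2$ principal submatrix $\bigl(\begin{smallmatrix} 0 & x \\ x & * \end{smallmatrix}\bigr)$ of a psd matrix has $x = 0$. Composing these elementary column operations into a single matrix $W_1$ gives exactly the form required in the statement: $W_1$ is the identity except on the $(R_1, R_{2:t})$ block, which records the multipliers. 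Then $W_1^\top X^{(1)} W_1$ has $(R_1, R_{2:t}) = 0$ while preserving $(R_1,R_1) = \operatorname{diag}(d_1,\dots,d_{|R_1|})$, and the $(R_{2:t}, R_{2:t})$ sub-block remains psd because it is a principal submatrix of a psd matrix.

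Finally, I apply the inductive hypothesis to this updated matrix with the $t-1$ disjoint sets $R_2,\dots,R_t$, obtaining $T' = I_{R_2,U_2}W_2\cdots I_{R_t,U_t}W_t$ that diagonalizes the $(R_{2:t}, R_{2:t})$ block. The main technical point — which I expect to be the main checking step, although routine — is that each factor in $T'$ is the identity on $R_1$ rows and columns (since $I_{R_i,U_i}$ and $W_i$ for $i \geq 2$ only touch indices in $R_{2:t}$), so congruence by $T'$ leaves the $(R_1,R_1)$ block unchanged and preserves the zeros in $(R_1,R_{2:t})$. Setting $T := I_{R_1,U_1}\, W_1\, T'$ finishes the induction and gives the desired factorization.
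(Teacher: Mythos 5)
Your proof follows the same iterative/inductive scheme as the paper: diagonalize $X(R_1)$ with an orthonormal $U_1$, then use column operations ($W_1$) to clear the $(R_1,R_{2:t})$ coupling, and recurse on $R_{2:t}$. The only genuine addition is your explicit treatment of the zero-eigenvalue case (where psd-ness already forces the off-diagonal row to vanish), a detail the paper leaves implicit; otherwise the two arguments coincide.
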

Suppose $W_i$ is a matrix in the statement of Lemma \ref{lemma-X-rotate}. We can  describe 
$W_i$  algebraically as follows: i) it has all $1$ entries on the main diagonal;  ii) the block $W_i(R_i, R_j)$ is nonzero for some $j$ indices in $\{i+1, \dots, t \};$ iii) all other blocks of $W_i$ are zero.

\begin{proof}[Proof\;(of Lemma \ref{lemma-X-rotate})]
Let $U_1$ be a matrix of orthonormal eigenvectors of $X(R_1)$ and define $T := 	I_{R_1, U_1}.$ Then $(T^{\top}XT)(R_{1:t})$ looks like on the first picture of Figure \ref{figure:convert}, where the  $\times$ symbols represent  arbitrary elements.

\begin{figure}[H]
	\begin{center}
		\begin{tikzpicture}[scale=0.5, every node/.style={scale=0.6}]

\fill[red!60] (0,5) rectangle (1,6);
\fill[cyan!80] (2,0) rectangle (6,4);
\fill[cyan!80] (0,0) rectangle (1,4);
\fill[cyan!80] (2,5) rectangle (6,6);

\draw (0,2) to (6,2);
\draw (0,4) to (6,4);
\draw (2,0) to (2,6);
\draw (4,0) to (4,6);
\draw (0,0) rectangle (6,6);

\draw (0,5) to (6,5);
\draw (1,6) to (1,0);

\node[] at (0.5,5.5) {\LARGE $+$};
\node[] at (1.5,4.5) {\LARGE $0$};
\node[] at (3,4.5) {\LARGE $0$};
\node[] at (5,4.5) {\LARGE $0$};
\node[] at (1.5,3) {\LARGE $0$};
\node[] at (1.5,1) {\LARGE $0$};
\node[] at (5,1) {\LARGE $\times$};
\node[] at (3,1) {\LARGE $\times$};
\node[] at (3,5.5) {\LARGE $\times$};
\node[] at (5,5.5) {\LARGE $\times$};
\node[] at (5,3) {\LARGE $\times$};
\node[] at (3,3) {\LARGE $\times$};
\node[] at (0.5,3) {\LARGE $\times$};
\node[] at (0.5,1) {\LARGE $\times$};

\draw[decoration={brace,raise=1.25pt},decorate] (0.05,6) -- node[above=6.75pt] {\Large $R_1$} (1.95,6);
\draw[decoration={brace,raise=1.25pt},decorate] (4.05,6) -- node[above=6.75pt] {\Large $R_t$} (5.95,6);
\node[] at (2.5,6.5) {\LARGE $.$};
\node[] at (3,6.5) {\LARGE $.$};
\node[] at (3.5,6.5) {\LARGE $.$};

\draw[decoration={brace,raise=1.25pt,mirror},decorate] (2+0.05,-0.15) -- node[below=6.75pt] {\LARGE $\succeq 0$} (5+0.95,-0.15);

\draw[decoration={brace,raise=1.25pt,mirror},decorate] (11+0.05,-0.15) -- node[below=6.75pt] {\LARGE $\succeq 0$} (14+0.95,-0.15);

\node[] at (7.5,3) {\LARGE $\longmapsto$};

\fill[red!60] (9+0,5) rectangle (9+1,6);
\fill[cyan!80] (9+2,0) rectangle (9+6,4);

\draw (9+0,2) to (9+6,2);
\draw (9+0,4) to (9+6,4);
\draw (9+2,0) to (9+2,6);
\draw (9+4,0) to (9+4,6);
\draw (9+0,0) rectangle (9+6,6);

\draw (9+0,5) to (9+6,5);
\draw (9+1,6) to (9+1,0);

\node[] at (9+0.5,5.5) {\LARGE $+$};
\node[] at (9+1.5,4.5) {\LARGE $0$};
\node[] at (9+3,4.5) {\LARGE $0$};
\node[] at (9+5,4.5) {\LARGE $0$};
\node[] at (9+1.5,3) {\LARGE $0$};
\node[] at (9+1.5,1) {\LARGE $0$};
\node[] at (9+5,1) {\LARGE $\times$};
\node[] at (9+3,1) {\LARGE $\times$};
\node[] at (9+3,5.5) {\LARGE $0$};
\node[] at (9+5,5.5) {\LARGE $0$};
\node[] at (9+5,3) {\LARGE $\times$};
\node[] at (9+3,3) {\LARGE $\times$};
\node[] at (9+0.5,3) {\LARGE $0$};
\node[] at (9+0.5,1) {\LARGE $0$};

\draw[decoration={brace,raise=1.25pt},decorate] (9+0.05,6) -- node[above=6.75pt] {\Large $R_1$} (9+1.95,6);
\draw[decoration={brace,raise=1.25pt},decorate] (9+4.05,6) -- node[above=6.75pt] {\Large $R_t$} (9+5.95,6);
\node[] at (9+2.5,6.5) {\LARGE $.$};
\node[] at (9+3,6.5) {\LARGE $.$};
\node[] at (9+3.5,6.5) {\LARGE $.$};

\end{tikzpicture}
	\end{center}
		\caption{How to diagonalize $X(R_{1:t})$ in Lemma \ref{lemma-X-rotate}}
	\label{figure:convert} 
\end{figure}
\vspace{-0.7cm}
Next we let $W_1$ be a matrix such that right multiplying $T^{\top} X  T$ by $W_1$ adds 
columns of $T^{\top} X  T$ indexed by $R_1$ to columns indexed by $R_j$ to zero out the 
$T^{\top}XT(R_1, R_j)$ block for all $j > 1.$ Then the $R_{1:t}$ region of $W_{1}^{\top}	T^{\top} X T W_1$ looks like 
in the right picture on Figure \ref{figure:convert}.

We then redefine $T := T W_1 \, $ and $X := T^{\top}XT, \, $ and continue in like fashion with the $R_{2:t}$ diagonal block of $X.$

\end{proof}

The next definition is from the theory of facial reduction \cite{BorWolk:81,Pataki:13}. 
\begin{definition}
	We say that a sequence of symmetric matrices $X_1, \dots, X_t$ is a facial reduction sequence for $\psdn$ if 
	$$
	X_1 \in \psdn, \quad \text{and} \quad X_{i+1} \in \bigl(   \psdn \cap X_1^\perp \cap \dots \cap X_i^\perp \bigr)^* \quad \text{for} ~~ i=1, \dots, t-1.
	$$ 
	Here, for a set $C \subseteq \symn$  we write  $C^* \, = \, \{ Y \, : \, X \bullet Y \geq 0 ~ \text{for  all} ~  X \in C \,   \}$ for its dual cone. 
\end{definition}
Evidently,  if $(X_1, \dots, X_t)$ is in semidefinite echelon form, then 
it is a facial reduction sequence, but the converse is not true in general.

Lemma \ref{claim-V-fr} below follows from Lemma 1 in \cite{liu2017exact}; however, below we give a simpler proof.
\begin{lemma} \label{claim-V-fr} Suppose that $(X_1, \dots, X_t)$ is a facial reduction sequence, and 
	$V$ is an invertible matrix. Then 
	$(V^{\top} X_1 V, \dots, V^{\top} X_{t} V)$ is also a  facial reduction sequence. 
\end{lemma}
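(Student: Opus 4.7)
The plan is to reduce everything to the single trace identity $(V^{\top} X V) \bullet Y = X \bullet (V Y V^{\top})$, which comes straight from the cyclic property of the trace, combined with the observation that since $V$ is invertible the congruence $Y \mapsto V Y V^{\top}$ is a bijection from $\psdn$ onto itself. Together these two facts say that congruence by $V$ on one side of the inner product is ``undone'' by the opposite congruence on the other side, and that positive semidefiniteness is preserved either way.

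With that identity in hand, I would handle the base case first: $V^{\top} X_1 V \succeq 0$ follows immediately because $X_1 \succeq 0$ and congruence by an invertible matrix preserves positive semidefiniteness. For the inductive condition, I would fix $i \in \{1, \dots, t-1\}$ and take any $Y \in \psdn$ that is orthogonal to $V^{\top} X_1 V, \dots, V^{\top} X_i V$. The identity then transports this hypothesis into the statement that $V Y V^{\top}$ is a psd matrix orthogonal to $X_1, \dots, X_i$, so the facial reduction property of the original sequence yields $X_{i+1} \bullet (V Y V^{\top}) \geq 0$. Applying the identity once more rewrites this inequality as $(V^{\top} X_{i+1} V) \bullet Y \geq 0$, which is exactly what is needed to place $V^{\top} X_{i+1} V$ in the dual cone $\bigl(\psdn \cap (V^{\top} X_1 V)^\perp \cap \dots \cap (V^{\top} X_i V)^\perp\bigr)^*$.

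I do not expect a real obstacle here. The only point deserving a bit of care is that the facial reduction condition is a dual cone membership $X_{i+1} \in (\,\cdot\,)^*$ rather than an orthogonality condition, so one needs to track the direction of the inequality $\geq 0$ rather than treating everything as equalities; but this is routine once the trace identity is written down. Everything else is a direct translation between the two sequences via that one identity, and the same argument would show symmetrically that $(V^{-\top} \cdot V^{-1})$ sends the transformed sequence back to the original, confirming that the map on facial reduction sequences is in fact a bijection.
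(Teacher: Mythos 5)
Your proof is correct and follows essentially the same route as the paper: the paper defines $\V X = V^\top X V$ with conjugate $\V^* Y = V Y V^\top$ and uses the adjoint identity $\V X_{i+1} \bullet Y = X_{i+1} \bullet \V^* Y$, which is exactly your trace identity in operator notation. Your version is slightly more explicit in spelling out the base case $V^\top X_1 V \succeq 0$ and the bijectivity of congruence on $\psdn$, but the core transport argument is identical.
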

\begin{proof} 
Let $(X_1, \dots, X_t)$ be as stated. For brevity, define the map $\V: \symn \rightarrow \symn$ as 
$
\V X  \, = \, V^{\top} X V \, \text{for } \, X \in \symn.
$
Then the conjugate of $\V$ is computed as 
$
\V^* Y = V Y V^{\top}  \, \text{for} \,  \, Y \in \sym{n}. 
$  

Let us fix $i \in \{1, \dots, t-1 \}$ and let 
$Y \in  \psd{n} \cap  ( \V X_1)^\perp     \cap \dots \cap  ( \V X_i)^\perp.$ 
We will show 
\begin{equation} \label{eqn-fr-toprove} 
 \V X_{i+1} \bullet Y \, \geq \, 0,
\end{equation} 
and this will prove our claim. 
From the definition of the conjugate we deduce 
\begin{equation} \label{V*YXi} 
\V^* Y \in \psd{n} \cap X_1^\perp \cap \dots \cap X_i^\perp.
\end{equation}
Hence  
$
 \V X_{i+1} \bullet Y  \, = \,   X_{i+1} \bullet \V^* Y  \, \geq \, 0,
$
where the inequality follows from \eqref{V*YXi} and 
from $(X_1, \dots, X_t)$ being a facial reduction sequence.
Hence \eqref{eqn-fr-toprove} follows and the proof is complete.

\end{proof}

We can now prove the difficult direction in Theorem \ref{thm-weak-ref}.

\begin{proof}[Proof\;(of ``only if"  in Theorem \ref{thm-weak-ref})]

Suppose that \eqref{p} is weakly infeasible 
and 
 Lemma \ref{thm-inf} produced the reformulation  \eqref{equation-p-infeas} 
 with operator 
 $\A^\prime$ and right hand side $b^\prime.$ 
  We claim that  $k \geq 1,$ so to obtain a contradiction, suppose $k=0.$ 
  Then 
  the alternative system of \eqref{equation-p-infeas}  (namely the system 
\eqref{p-alt} with $(\A', b')$ in place of $(\A, b)$) is feasible, in particular, $y=(1,0, \dots, 0)^\top$ is feasible in it. 
Hence \eqref{equation-p-infeas} is strongly infeasible. Thus \eqref{p} is also strongly infeasible, which is the desired contradiction.

Also suppose that  Lemma   \ref{thm-notstrong} produced the reformulation 
 \eqref{equation-p-notstrong}  with 
operator $\A^{\prime\prime}$ and right hand side $b^{\prime\prime};$ 
and it produced  the sequence   $(X_1, \dots, X_{\ell+1})$ which is in semidefinite echelon form and certifies that  \eqref{equation-p-notstrong} is not strongly infeasible.
We claim that  $\ell \geq 1. \, $ Indeed, if $\ell$ were $0, \, $ then 
$X_1$ would be feasible in \eqref{equation-p-notstrong}, hence \eqref{p} would also be feasible, which would be a contradiction.

  As usual, we represent the operator $\A'$ with matrices $A_i'$ and the operator $\A''$ with matrices $A_i''$ for $i=1, \dots, m.$  %', \dots, A_m''.$ 
Further, following the proof of Lemma \ref{thm-inf}
we assume without loss of generality that the positive definite blocks 
in the $A_i'$ are identities. 

If   \eqref{equation-p-notstrong} is the same as \eqref{equation-p-infeas}, then there is nothing to do.
Otherwise, since both  are reformulations of \eqref{p}, 
we can transform \eqref{equation-p-notstrong}  into \eqref{equation-p-infeas}  if we 
\begin{enumerate} 
\item  \label{ero} perform a sequence of elementary row operations on  the equations $A_i'' \bullet X = b_i''; \, $ 
then 
\item   \label{congr} replace all $A^{\prime\prime}_i$ by $V^{\top} A^{\prime\prime}_i V$  for some invertible matrix $V.$
\end{enumerate} 
Suppose we perform only the elementary row operations, and for simplicity we still call the resulting 
reformulation   \eqref{equation-p-notstrong} with operator $\A^{\prime\prime}$ (represented by matrices $A_i''$) 
and right hand side $b^{\prime\prime}.$ Of course, now $b^\prime = b^{\prime\prime}.$ 
Afterwards equations \eqref{equation-p-notstrong-2}   still hold. At this point we have
\begin{equation} \label{so-complicated} 
\begin{array}{rcll}
A_i' & = & V^{\top} A_i'' V                                                                                                                      & \text{for} ~~ i=1, \dots, m, \\ 
 A^{\prime\prime}_i \bullet X_s  & = &    V^{\top} A^{\prime\prime}_i V \bullet V^{-1} X_s V^{-\top}  & \text{\rm for} ~~ i=1, \dots, m;  \text{ \rm for } ~ s=1, \dots, \ell+1,
\end{array} 
\end{equation}
where the second set of equations follows from the properties of the $\bullet$ product. 
We next perform the following operations:
\begin{equation} \label{eqn-change-Xj}   
X_s  :=  V^{-1} X_s V^{-\top}\quad\text{\rm for} ~~ s=1, \dots, \ell+1.
\end{equation}
Let us consider  the following {\em invariant conditions}, where $j \in \{0, \dots, \ell + 1 \}$: 
\begin{enumerate}[start=1,label={(\bfseries INV-\arabic*):},ref=\bfseries INV-\arabic*, leftmargin=1.6cm]
	\item \label{cond-reformulation}
The semidefinite system \eqref{equation-p-infeas} is 
a 	reformulation of \eqref{p} with properties given in Lemma \ref{thm-inf}.
		 In particular, 
	$(A_1^\prime,\dots, A_{k+1}^\prime)$ is in semidefinite echelon form 
	and 
	$(b_1^\prime, \dots, b_k^\prime, b_{k+1}^\prime) = (0, \dots, 0, -1).$ 
		\item \label{cond-eqn}  
		\begin{equation} \label{eqn-Aiprime-Xj-2}
		\begin{array}{ccl}
		\mathcal{A}^{\prime}X_s &=& 0   ~~  \text{for} ~~  s=1,\dots,\ell \\
		\mathcal{A}^{\prime}X_{\ell+1} &=& b^{\prime}.
		\end{array}
		\end{equation}
		\item \label{cond-fr} $(X_1, \dots, X_{\ell+1})$ is a facial reduction sequence.
		\item \label{cond-sreg} $(X_1, \dots, X_{j})$ is in semidefinite echelon form.
		\end{enumerate}
We claim that all these conditions hold when  $j=0.$ 
Indeed, \eqref{cond-reformulation} holds since \eqref{equation-p-infeas} was constructed in Lemma \ref{thm-inf}. 
Condition \eqref{cond-sreg} holds vacuously.  Condition  \eqref{cond-eqn}  holds by \eqref{so-complicated}, 
by $b^\prime = b^{\prime \prime}, $  and since we performed the operations in \eqref{eqn-change-Xj}. 
Finally, condition  \eqref{cond-fr} holds by  Lemma \ref{claim-V-fr}, since 
we started with $(X_1, \dots, X_{\ell+1})$ being in semidefinite echelon form,
then we performed operations \eqref{eqn-change-Xj}.

The goal is to have the invariant conditions satisfied 
with $j = \ell+1.$ Once that is done, the proof is complete, since 
we can set  \eqref{equation-p-weak} equal to \eqref{equation-p-infeas}. 
So let us assume that $j \in \{0, \dots, \ell \}$ is an integer, and all the invariant conditions hold with $j.$ 

We will perform Step $j$ below, which diagonalizes  a certain block of  $X_{j+1}$ 
and in the process also transforms $\A'$ and the other $X_j.$ We  will prove that afterwards the invariant conditions hold with $j+1.$ Recall the notation \eqref{eqn-RiRj}. 

\begin{enumerate}
	\item[{\bf Step $j$ \, }]
	We assume that $(A_1^\prime, \dots, A_{k+1}^\prime)$ has structure 
		$\{P_1, \dots, P_{k+1}\}$ and set $P_{k+2} := N \setminus P_{1:(k+1)}$. We also assume that 
		$(X_1, \dots, X_j)$ has structure $\{Q_1, \dots, Q_j\}.$ 

By condition  \eqref{cond-sreg} and using an argument like  in the proof of the ``if" direction in Theorem 
\ref{thm-weak-ref} we see that 
\mbox{$\psd{n} \cap X_1^\perp \cap \dots \cap X_j^\perp$} is the set of psd matrices whose 
rows and columns corresponding to $Q_{1:j}$ are zero.
By condition \eref{cond-fr} we have 
$$ 
X_{j+1} \in \bigl( \psd{n} \cap X_1^\perp \cap \dots \cap X_j^\perp \bigr)^*, 
$$ 
hence 
\begin{equation} \label{eqn-Xj+1psd}  
X_{j+1}(N \setminus  Q_{1:j}) \succeq 0. 
\end{equation}
Let us define $R_i \, = \, P_i \setminus Q_{1:j}$ for  $i=1, \dots, k+2.$ %\,\, {\rm for} \, i=1,
Then  we  rewrite \eqref{eqn-Xj+1psd} as 
 \begin{equation} \label{eqn-Xj+1psd-Ri} 
 X_{j+1}(R_1 \cup \dots \cup R_{k+2}) \succeq 0. 
 \end{equation} 
 So $X_{j+1}$ looks like on Figure 
 \ref{fig:X_(j+1)-before}, where the red submatrix is positive semidefinite, and the blue and white submatrices are arbitrary.
 \begin{figure}[H]
 	\begin{center} 
 		\begin{tikzpicture}[scale=0.4, every node/.style={scale=0.6}]
    
    \fill[cyan!80] (0,8) rectangle (11,9);
    \fill[cyan!80] (0,5) rectangle (11,6);
    \fill[cyan!80] (0,0) rectangle (11,1);
    \fill[cyan!80] (2,0) rectangle (3,11);
    \fill[cyan!80] (5,0) rectangle (6,11);
    \fill[cyan!80] (10,0) rectangle (11,11);
    \fill[red!60] (2,0) rectangle (3,1);
    \fill[red!60] (5,0) rectangle (6,1);
    \fill[red!60] (10,0) rectangle (11,1);
    \fill[red!60] (2,5) rectangle (3,6);
    \fill[red!60] (5,5) rectangle (6,6);
    \fill[red!60] (10,5) rectangle (11,6);
    \fill[red!60] (2,8) rectangle (3,9);
    \fill[red!60] (5,8) rectangle (6,9);
    \fill[red!60] (10,8) rectangle (11,9);
    
    \draw[] (2,0) to (2,11);
    \draw[] (3,0) to (3,11);
    \draw[] (5,0) to (5,11);
    \draw[] (6,0) to (6,11);
    \draw[very thick] (8,0) to (8,11);
    \draw[] (10,0) to (10,11);
    \draw[] (0,9) to (11,9);
    \draw[] (0,8) to (11,8);
    \draw[] (0,6) to (11,6);
    \draw[] (0,5) to (11,5);
    \draw[very thick] (0,3) to (11,3);
    \draw[] (0,1) to (11,1);
    \draw (0,0) rectangle (11,11);
    
    \node[] at (6.5,4.5) {\LARGE $.$};
    \node[] at (7,4) {\LARGE $.$};
    \node[] at (7.5,3.5) {\LARGE $.$};
    
    \draw[decoration={brace,raise=1.25pt,mirror},decorate] (0+0.05,0) -- node[below=6.75pt] {\Large $P_1\cap Q_{1:j}$} (1+0.95,0);
    
    \draw[decoration={brace,raise=1.25pt},decorate] (0,8.05) -- node[left=6.75pt] {\Large $R_1$} (0,8.95);
    
    \draw[decoration={brace,raise=1.25pt,mirror},decorate] (3+0.05,0) -- node[below=6.75pt] {\Large $P_2\cap Q_{1:j}$} (4+0.95,0);
    
    \draw[decoration={brace,raise=1.25pt},decorate] (0,5.05) -- node[left=6.75pt] {\Large $R_2$} (0,5.95);
    
    \draw[decoration={brace,raise=1.25pt,mirror},decorate] (8+0.05,0) -- node[below=6.75pt] {\Large $P_{k+2}\cap Q_{1:j}$} (9+0.95,0);
    
    \draw[decoration={brace,raise=1.25pt},decorate] (0,0.05) -- node[left=6.75pt] {\Large $R_{k+2}$} (0,0.95);

    \end{tikzpicture}
 	\end{center}
 	\caption{$X_{j+1}$ before it is transformed}
 	\label{fig:X_(j+1)-before}  
 \end{figure}
We now apply Lemma \ref{lemma-X-rotate} with $X := X_{j+1}, \, t := k+2$  and the $R_i$ just defined.
The goal is to diagonalize $X_{j+1}(R_{1:t}).$  
Let   $T$ be the transformation matrix produced by Lemma \ref{lemma-X-rotate}, then  
%$T$ can be written as the product of $n$ by $n$ invertible matrices %to be of the form 
\begin{equation} \label{eqn-T-decomp} 
T = I_{R_1, U_1} W_{1}  \dots I_{R_{k+2}, U_{k+2}} W_{k+2}, 
\end{equation}
where 
\begin{enumerate}
	\item the $U_i$ are orthonormal matrices for all $i.$ 
	\item \label{propertyWi} right multiplying an $n \times n$ matrix, say $M, \, $ 
	by $W_i$  
	adds multiples of columns in $M(N, R_i)$ to columns of $M(N, R_j)$ where 
	$j \in \{i+1, \dots, k+2 \}.$     
\end{enumerate}
We perform the operations 
\begin{equation} \label{eqn-T-Ai-Xj}   
\begin{array}{rcll} 
X_s  & := &  T^{\top}  X_s T    &  \text{\rm for} ~~ s = 1, \dots, \ell+1,  \\
A_i^\prime   & := &  T^{-1} A_i^\prime T^{-\top}   &  \text{\rm for} ~~ i=1, \dots, m,   
\end{array} 
\end{equation}
and set 
$$
Q_{j+1} := \{ t ~|~ \text{\rm the }(t,t)\text{ \rm element of }X_{j+1}(N \setminus Q_{1:j}) \text{ \rm is positive} \}.   
$$
\end{enumerate}
We claim that  the invariant  conditions now hold for $j+1.$ Indeed,  \eref{cond-eqn}  holds by how we redefined the $X_s$ and $A_i^\prime$ in \eqref{eqn-T-Ai-Xj}. Condition \eqref{cond-fr}  holds by Lemma \ref{claim-V-fr}. 

We next look at \eref{cond-sreg}. We first show on Figure \ref{figure-Xj+1} how $X_{j+1}$ looks before and after step \eqref{eqn-T-Ai-Xj}. To better see what happened, we permuted the rows and columns of
$X_{j+1}, \, $ so that rows (and columns) indexed by $Q_{1:j}$ come first. 
The $\oplus$ block stands for a positive semidefinite block. 

Step \eqref{eqn-T-Ai-Xj} transforms 
 $X_{j+1}(N \setminus Q_{1:j})$  to be nonnegative diagonal, and in this process $X_{j+1}(Q_{1:j}, N \setminus Q_{1:j})$ (and the symmetric counterpart) is also transformed. We show the changed block in red in Figure \ref{figure-Xj+1}.
\begin{figure}[H]    
	\begin{center} 
    \begin{tikzpicture}[scale=0.65, every node/.style={scale=0.6}]
	
	\fill[red!60] (0,0) rectangle (4,3);
	\fill[red!60] (1,3) rectangle (4,4);
	
	\fill[red!60] (6,0) rectangle (10,3);
	\fill[red!60] (7,3) rectangle (10,4);
	
	\draw[very thick] (0,3) to (4,3);
	\draw[very thick] (1,0) to (1,4);
	\draw (0,0) rectangle (4,4);
	
	\node[] at (0.5,1.5) {\LARGE $\times$};
	\node[] at (2.5,3.5) {\LARGE $\times$};
	\node[] at (0.5,3.5) {\LARGE $\times$};
	\node[] at (2.5,1.5) {\huge $\oplus$};
	
	\draw[very thick] (6,3) to (10,3);
	\draw[very thick] (7,0) to (7,4);
	\draw (6,2) to (10,2);
	\draw (8,0) to (8,4);
	\draw (6,0) rectangle (10,4);
	
	\node[] at (6.5,1) {\LARGE $\times$};
	\node[] at (9,3.5) {\LARGE $\times$};
	\node[] at (6.5,3.5) {\LARGE $\times$};
	\node[] at (7.5,3.5) {\LARGE $\times$};
	\node[] at (6.5,2.5) {\LARGE $\times$};
	\node[] at (7.5,2.5) {\LARGE $+$};

	\node[] at (5,2) {\Large $\longmapsto$};
	
	\draw[decoration={brace,raise=1.25pt},decorate]
	(0.05,4) -- node[above=6.75pt] {\Large $Q_{1:j}$} (0.95,4);
	
	\draw[decoration={brace,raise=1.25pt},decorate]
	(1.05,4) -- node[above=6.75pt] {\Large $N\setminus Q_{1:j}$} (3.95,4);
	
	\draw[decoration={brace,raise=1.25pt},decorate]
	(6.05,4) -- node[above=6.75pt] {\Large $Q_{1:j}$} (6.95,4);
	\draw[decoration={brace,raise=1.25pt},decorate]
	(7.05,4) -- node[above=6.75pt] {\Large $Q_{j+1}$} (7.95,4);
	\draw[decoration={brace,raise=1.25pt},decorate]
	(8.05,4) -- node[above=6.75pt] {\Large $N\setminus Q_{1:j+1}$} (9.95,4);
	
	\end{tikzpicture}
    \end{center}
	\caption{How step \eqref{eqn-T-Ai-Xj}   changes $X_{j+1}$}
	\label{figure-Xj+1}  %\label{fig:l1fig}
\end{figure}
Next we look at how $X_1, \dots, X_j$ change due to step  \eqref{eqn-T-Ai-Xj}, so we fix $s \in \{1, \dots, j\}.$  Given the factorization of $T$ in \eqref{eqn-T-decomp}, replacing $X_s$ by $T^{\top} X_s T$ amounts to running   Algorithm \ref{algo:transf-Xj+1} below:
\begin{algorithm}[H] 
	\caption{Transforming $X_{s}$}
	\label{algo:transf-Xj+1}
	\begin{algorithmic}[*]
		%\State Let $y_0 = 0, \, F_0 = K, \, i=1.$
		%\For $i=1, 2, \dots $ 
		\For{${t=1:(k+2)}$}  
		\hypertarget{algo3:s}{\State (*) $X_{s} :=   I_{R_t, U_t}^{\top} X_{s} I_{R_t, U_t};$ }
			\hypertarget{algo3:ss}{\State (**) $X_{s} :=   W_t^{\top} X_{s} W_t; $ }
					\EndFor 
	\end{algorithmic}
\end{algorithm}
We claim that after Algorithm \ref{algo:transf-Xj+1} is run, the matrix 
$X_s$ remains in the same shape it was in before.
Suppose this is true after we performed steps \hyperlink{algo3:s}{(*)} and \hyperlink{algo3:ss}{(**)} for $t=1, \dots, q-1,$ where $q \geq 1.$ 

We next perform step \hyperlink{algo3:s}{(*)} with $t=q. \, $ 
This amounts to first multiplying $X_{s}(N, R_q)$ from the right by $U_q, \, $ then 
multiplying $X_{s}(R_q, N)$ from the left by $U_q^{\top}. \, $  
Since $R_1 \cup \dots \cup R_{k+2} = N \setminus Q_{1:j}, \, $ we see that 
$$R_q \subseteq  N \setminus Q_{1:j} \subseteq N \setminus Q_{1:s}.$$
We depict $X_s$ on Figure \ref{fig:transf-X-s}, with the affected parts shaded in red and conclude that $X_s$ remains in the shape it was in before step \eqref{eqn-T-Ai-Xj}.  Note that on Figure \ref{fig:transf-X-s} we permuted the rows and columns of $X_s$ 
so that $X_s(Q_{1:s})$ is in the upper left corner. 
\begin{figure}[H]
	\begin{center} 
    \begin{tikzpicture}[scale=0.65, every node/.style={scale=0.6}]
	
	\fill[red!60] (4,0) rectangle (5,5);
	\fill[red!60] (0,0) rectangle (5,1);
	
	\draw[very thick] (0,3) to (5,3);
	\draw[very thick] (2,0) to (2,5);
	\draw (0,0) rectangle (5,5);
	
	\draw (0,2) to (5,2);
	\draw (0,1) to (5,1);
	\draw (3,0) to (3,5);
	\draw (4,0) to (4,5);
	
	\node[] at (2.5,2.5) {\LARGE $+$};
	
	\node[] at (1,4) {\LARGE $\times$};
	\node[] at (2.5,4) {\LARGE $\times$};
	\node[] at (3.5,4) {\LARGE $\times$};
	\node[] at (4.5,4) {\LARGE $\times$};
	\node[] at (1,2.5) {\LARGE $\times$};
	\node[] at (1,1.5) {\LARGE $\times$};
	\node[] at (1,0.5) {\LARGE $\times$};
	
	\draw[decoration={brace,raise=1.25pt},decorate]
	(0.05,5) -- node[above=6.75pt] {\Large $Q_{1:s-1}$} (1.95,5);
	
	\draw[decoration={brace,raise=1.25pt},decorate]
	(2.05,5) -- node[above=6.75pt] {\Large $Q_s$} (2.95,5);
	
	\draw[decoration={brace,raise=1.25pt},decorate]
	(3.05,5) -- node[above=6.75pt] {\Large $N\setminus Q_{1:s}$} (4.95,5);
	
	\draw[decoration={brace,raise=1.25pt},decorate] (0,0.05) -- node[left=6.75pt] {\Large $R_q$} (0,0.95);
	
	\end{tikzpicture}
    \end{center}
	\caption{How steps \protect\hyperlink{algo3:s}{(*)} and \protect\hyperlink{algo3:ss}{(**)} in Algorithm \ref{algo:transf-Xj+1}  change $X_{s},$ where $s \leq j.$}
	\label{fig:transf-X-s}  
\end{figure}
We next perform step \hyperlink{algo3:ss}{(**)} with $t=q. \, $ Multiplying $X_{s}$ from the right by 
$W_q$ adds multiples of columns in $X_s(N, R_q)$ 
to columns   in $X_s(N, R_{q'})$  where $q' \in \{q+1, \dots, k+2 \}.$ 
Then we perform the analogous row operations. 
Thus Figure \ref{fig:transf-X-s}   again tells us that 
$X_{s}$ remains in the same shape. 

We thus conclude that condition (\ref{cond-sreg}) holds after step \eqref{eqn-T-Ai-Xj} with $j+1$ instead of $j.$ 

We next prove that condition  \eqref{cond-reformulation}   remains unchanged after step  \eqref{eqn-T-Ai-Xj} is executed, so we 
look at how the $A_i^\prime$ change. % due to Step  \eqref{eqn-T-Ai-Xj}. 
Let us fix $i \in \{1, \dots, m \}.$ 

Given the decomposition \eqref{eqn-T-decomp}, we have 
\begin{equation} \label{eqn-T-T} 
\begin{array}{rcl} 
T^{-\top}  & = & I_{R_1 U_1}^{-\top} W_{1}^{-\top}  \dots I_{R_{k+2}, U_{k+2}}^{-\top} W_{k+2}^{-\top}, \\
T^{-1}  & = & W_{k+2}^{-1}  I_{R_{k+2} U_{k+2}}^{-1}   \dots W_{1}^{-1}  I_{R_{1}, U_{1}}^{-1}.  
\end{array} 
\end{equation} 
We also know that for all $t \in \{1, \dots, k+2  \}$ by the the definition of  $ \, I_{R, G} $ and 
by  $U_t^{\top} = U_t^{-1}$ the following  hold: 
\begin{equation}
\begin{array}{rcl} 
I_{R_t, U_t}^{-1} & = & I_{R_t, U_t^{\top}} \\ 
I_{R_t, U_t}^{-\top} & = & I_{R_t, U_t}. 
\end{array}
\end{equation}  
Thus, given  the decomposition \eqref{eqn-T-T},  performing step  \eqref{eqn-T-Ai-Xj} on $A_i^\prime$ amounts to running Algorithm \ref{algo:transf-Ai} below.
\begin{algorithm}[H]
	\caption{Transforming  $A_i^\prime$}
	\label{algo:transf-Ai}
	\begin{algorithmic}
		\For{${t=1:(k+2)}$}  
		\hypertarget{algo4:s}{\State(*) $A_{i}^\prime :=   I_{R_t, U_t^{\top}} A_i^\prime I_{R_t, U_t};$ }
		\hypertarget{algo4:ss}{\State (**) $A_i^\prime :=   W_t^{-1} A_i^\prime W_t^{-\top}; $ }
		\EndFor 
	\end{algorithmic}
\end{algorithm}
We claim that after Algorithm \ref{algo:transf-Ai} is run, the matrix $A_i'$ remains in the shape it was in before.

Suppose this is true after we performed steps \hyperlink{algo4:s}{(*)} and \hyperlink{algo4:ss}{(**)} for $t=1, \dots, q-1,$ where $q \geq 1.$  
Next we perform \hyperlink{algo4:s}{(*)} with $t=q.$ 
We need to keep in mind that $q \in \{1, \dots, k+2 \}$ and $R_q \subseteq P_q.$  

We distinguish three cases.
\begin{itemize}[leftmargin=1cm]
	\item[$\mathbf{q=i}$]   
		We show on Figure \ref{fig:transf-Ai-q=i}  the $A_i^\prime$ matrix before and after step \hyperlink{algo4:s}{(*)}. The changed portion is in red. %!!! Indexit by $R_i$ !!! 
	
	First the submatrix $A_i^\prime(N, R_q)$ is multiplied from the right by 	$U_q, \, $ then the 
	submatrix $A_i^\prime(R_q, N)$ is multiplied from the left by 	$U_q^{\top}. \, $
	   
	Thus $A_i^\prime(R_q)=I$ is replaced by $U_q^{\top}  I U_q = I, \,$ so it remains an identity. 
	In summary, $A_i^\prime$ has the same form before and after step \hyperlink{algo4:s}{(*)}. 
		\begin{figure}[H]
		\begin{center} 
        \begin{tikzpicture}[scale=0.65, every node/.style={scale=0.6}]
		
		\fill[red!60] (2,0) rectangle (3,6);
		\fill[red!60] (0,3) rectangle (6,4);
		
		\draw[very thick] (0,2) to (6,2);
		\draw[very thick] (0,4) to (6,4);
		\draw[very thick] (2,0) to (2,6);
		\draw[very thick] (4,0) to (4,6);
		\draw (0,0) rectangle (6,6);
		
		\draw (0,3) to (6,3);
		\draw (3,0) to (3,6);
		
		\node[] at (2.5,3.5) {\LARGE $I$};
		\node[] at (3.5,2.5) {\LARGE $I$};
		
		\node[] at (1,3.5) {\LARGE $\times$};
		\node[] at (2.5,5) {\LARGE $\times$};
		\node[] at (1,2.5) {\LARGE $\times$};
		\node[] at (3.5,5) {\LARGE $\times$};
		
		\node[] at (5,5) {\LARGE $\times$};
		\node[] at (1,1) {\LARGE $\times$};
		\node[] at (1,5) {\LARGE $\times$};
		
		\draw[decoration={brace,raise=1.25pt},decorate]
		(0.05,6) -- node[above=6.75pt] {\Large $P_{1:i-1}$} (1.95,6);
		
		\draw[decoration={brace,raise=1.25pt},decorate]
		(2.05,6) -- node[above=6.75pt] {\Large $P_i$} (3.95,6);
		
		\draw[decoration={brace,raise=1.25pt},decorate]
		(4.05,6) -- node[above=6.75pt] {\Large $N\setminus P_{1:i}$} (5.95,6);
		
		\draw[decoration={brace,raise=1.25pt},decorate] (0,3.05) -- node[left=6.75pt] {\Large $R_q$} (0,3.95);
		
		\end{tikzpicture}
        \end{center}
		\caption{How step \protect\hyperlink{algo4:s}{(*)} in Algorithm \ref{algo:transf-Ai} changes $A_i^\prime$, when $q=i$}
		\label{fig:transf-Ai-q=i} 
	\end{figure}

 \item[$\mathbf{q<i}$]   We show on  Figure \ref{fig:transf-Ai-q<i} the $A_i^\prime$ matrix, before and after step \hyperlink{algo4:s}{(*)}. The changed portion is in red.

	First the submatrix $A_i^\prime(N, R_q)$ is multiplied from the right by 	$U_q, \, $ then the 
	submatrix $A_i^\prime(R_q, N)$ is multiplied from the left by 	$U_q^{\top}. \, $
	
Again, $A_i^\prime$ has the same form before and after step \hyperlink{algo4:s}{(*)}. 
	
	\begin{figure}[H]
		\begin{center} 
        \begin{tikzpicture}[scale=0.65, every node/.style={scale=0.6}]
		
		\fill[red!60] (1,0) rectangle (2,6);
		\fill[red!60] (0,4) rectangle (6,5);
		
		\draw[very thick] (0,2) to (6,2);
		\draw[very thick] (0,4) to (6,4);
		\draw[very thick] (2,0) to (2,6);
		\draw[very thick] (4,0) to (4,6);
		\draw (0,0) rectangle (6,6);
		
		\draw (0,5) to (6,5);
		\draw (1,0) to (1,6);
		
		\node[] at (3,3) {\LARGE $I$};
		
		\node[] at (0.5,4.5) {\LARGE $\times$};
		\node[] at (1.5,5.5) {\LARGE $\times$};
		\node[] at (1.5,3) {\LARGE $\times$};
		\node[] at (1.5,1) {\LARGE $\times$};
		\node[] at (0.5,3) {\LARGE $\times$};
		\node[] at (3,5.5) {\LARGE $\times$};
		\node[] at (3,4.5) {\LARGE $\times$};
		\node[] at (5,4.5) {\LARGE $\times$};
		
		\node[] at (5,5.5) {\LARGE $\times$};
		\node[] at (0.5,1) {\LARGE $\times$};
		\node[] at (0.5,5.5) {\LARGE $\times$};
		\node[] at (1.5,4.5) {\LARGE $\times$};
		
		\draw[decoration={brace,raise=1.25pt},decorate]
		(0,4.05) -- node[left=6.75pt] {\Large $R_q$} (0,4.95);
		
		\draw[decoration={brace,raise=1.25pt},decorate]
		(0.05,6) -- node[above=6.75pt] {\Large $P_{1:i-1}$} (1.95,6);
		
		\draw[decoration={brace,raise=1.25pt},decorate]
		(2.05,6) -- node[above=6.75pt] {\Large $P_i$} (3.95,6);
		
		\draw[decoration={brace,raise=1.25pt},decorate]
		(4.05,6) -- node[above=6.75pt] {\Large $N\setminus P_{1:i}$} (5.95,6);
		
		\end{tikzpicture}
        \end{center}
		\caption{How step \protect\hyperlink{algo4:s}{(*)} in Algorithm \ref{algo:transf-Ai} changes $A_i^\prime$, when $q<i$}
		\label{fig:transf-Ai-q<i} 
	\end{figure}

 	\item[$\mathbf{q>i}$] We show on Figure \ref{fig:transf-Ai-q>i} the $A_i^\prime$ matrix, before and after step \hyperlink{algo4:s}{(*)}. The changed portion is again in red.
 	
 	First the submatrix $A_i^\prime(N, R_q)$ is multiplied from the right by 	$U_q, \, $ then the 
 	submatrix $A_i^\prime(R_q, N)$ is multiplied from the left by 	$U_q^{\top}. \, $
 	
 Yet again, $A_i^\prime$ has the same form before and after step \hyperlink{algo4:s}{(*)}. 
	
	\begin{figure}[H]
 		\begin{center} 
        \begin{tikzpicture}[scale=0.65, every node/.style={scale=0.6}]
	 			
	 			\fill[red!60] (4,0) rectangle (5,6);
	 			\fill[red!60] (0,1) rectangle (6,2);
	 			
	 			\draw[very thick] (0,2) to (6,2);
	 			\draw[very thick] (0,4) to (6,4);
	 			\draw[very thick] (2,0) to (2,6);
	 			\draw[very thick] (4,0) to (4,6);
	 			\draw (0,0) rectangle (6,6);
	 			
	 			\draw (0,1) to (6,1);
	 			\draw (5,0) to (5,6);
	 			
	 			\node[] at (3,3) {\LARGE $I$};
	 			
	 			\node[] at (1,5) {\LARGE $\times$};
	 			\node[] at (3,5) {\LARGE $\times$};
	 			\node[] at (4.5,5) {\LARGE $\times$};
	 			\node[] at (5.5,5) {\LARGE $\times$};
	 			\node[] at (1,3) {\LARGE $\times$};
	 			\node[] at (1,1.5) {\LARGE $\times$};
	 			\node[] at (1,0.5) {\LARGE $\times$};
	 			
	 			\draw[decoration={brace,raise=1.25pt},decorate]
		        (0,1.05) -- node[left=6.75pt] {\Large $R_q$} (0,1.95);
	 			
	 			\draw[decoration={brace,raise=1.25pt},decorate]
	 			(0.05,6) -- node[above=6.75pt] {\Large $P_{1:i-1}$} (1.95,6);
	 			
	 			\draw[decoration={brace,raise=1.25pt},decorate]
	 			(2.05,6) -- node[above=6.75pt] {\Large $P_i$} (3.95,6);
	 			
	 			\draw[decoration={brace,raise=1.25pt},decorate]
	 			(4.05,6) -- node[above=6.75pt] {\Large $N\setminus P_{1:i}$} (5.95,6);
	 			
	 			\end{tikzpicture}
        \end{center}
 		\caption{How step \eqref{eqn-T-Ai-Xj} changes $A_i^\prime$, when $q>i$}
 		\label{fig:transf-Ai-q>i} 
 	\end{figure}
	 	
\end{itemize}

Next we perform step \hyperlink{algo4:ss}{(**)} with $t=q.$  We recall that right multiplying $A_i'$ by $W_q$ adds columns of 
$A_i'(N, R_q)$ to columns in $A_i'(N, R_{q'})$ where $q < q'.$ By the algebraic description 
of $W_q$  (after the statement of Lemma \ref{lemma-X-rotate})  we see that 
multiplying $A_i^\prime$ from the right by $W_q^{-\top}$  adds 
columns of $A_i'(N, R_{q'})$ to columns in $A_i'(N, R_{q})$ where $q < q'.$ (Multiplying $A_i'$ from the left by 
$W_q^{-1}$ works analogously on the rows of $A_i'.$) 
  
Recall that $R_{q} \subseteq P_{q}$ and $R_{q'} \subseteq P_{q'}.$ Thus  Figures 
\ref{fig:transf-Ai-q=i},  	\ref{fig:transf-Ai-q<i}, and \ref{fig:transf-Ai-q>i} tell us that 
$A_i^\prime$ remains in the same shape as it was in before step \hyperlink{algo4:ss}{(**)}. 
Thus condition \eqref{cond-reformulation} holds, and the proof is complete.

\end{proof}

\section{Our problem library and computational tests}
\label{section-computational}

Using the results of the previous sections, we now show how to generate a library of weakly infeasible SDPs.
We  accompany our SDPs with an intuitive visualization and 
examine whether their infeasibility can be 
recognized by the prominent SDP solvers MOSEK   \cite{andersen2000mosek}   and SDPA-GMP 
\cite{fujisawa2008sdpaGMP}. 

Libraries of weakly infeasible SDPs are available \cite{liu2017exact,Waki:12} the latter of these 
was generated using the Lasserre relaxation of polynomial optimization problems.
On the other hand, any weakly infeasible SDP is a possible output of our Algorithm \ref{weaksdpalgo}, so our current library includes instances that are unlikely to appear in any previous  collection. 

\paragraph{The instances} 
%\subsection{Instance Structures}

We constructed all $80$ instances 
using Algorithm \ref{weaksdpalgo} and 
split them into two classes: clean and messy.

\begin{itemize}
	\item We constructed our clean instances by steps \ref{alg-1}--\ref{alg-3} 
	of Algorithm \ref{weaksdpalgo}, without using the reformulation step of step \ref{alg-4}, 
	so our clean 
	 SDPs are in the echelon form \eqref{equation-p-weak}.    %presented in Theorem \ref{thm-weak-ref}.
	
\item From each clean instance  we created a corresponding messy instance 
as follows. %Given \co{the size of the SDP in  parameters $n$ and $m, \, $ 
 We applied 
 elementary row operations that 
 we represent by an $m \times m$ integral matrix $F = (f_{ij}),$ then 
 a congruence transformation  that we represent by an 
$n \times n$ integral matrix $T.$ 
	
	That is, if \eqref{p} is a clean instance, then 
 in the corresponding messy instance 	constraint $i$ is
	$$
	T^{\top} (\sum_{j=1}^m f_{ij} A_j)  T \bullet X \, = \, \sum_{j=1}^m f_{ij} b_j. 
	$$
	
\end{itemize} 

We further categorize the instances as ``miniature", ``small", ``medium",  and ``large", with parameters given in Table \ref{table-param}.  
 In each category the clean and messy instances are in one-to-one correspondence. For example, from the ``mini, clean, 9" instance we constructed the ``mini, messy, 9" instance.  
 In all instances all data is integral,  so their weak infeasibility can be verified by  hand, 
 in exact arithmetic.

\renewcommand{\arraystretch}{1.2} 
\begin{table}
	\centering
	\begin{tabular}{|c||c|c|c|c|}
		\hline
		& Miniature & Small & Medium & Large \\
		\hline\hline
		$k$ & 1 & 3 & 3 & 4 \\
		\hline
		$\ell$ & 1 & 1--3 & 2--4 & 2--4 \\
		\hline
		$n$ & 3 & 5--15 & 25--40 & 120--240 \\
		\hline
		$m$ & 2--5 & 4--7 & 4--7 & 5--8 \\
		\hline
	\end{tabular}
	\caption{Parameters of our weakly infeasible SDPs: $k+1$ being the length of the infeasibility certificate, $\ell+1$ the length of the not-strong infeasibility certificate, $n$ the matrix order, and $m$ the number of constraints.} 
	\label{table-param} 
\end{table}

\paragraph{Data storage}

For convenience we give our instances in three formats. 

\begin{enumerate}
	\item In the ``.mat'' files (in Matlab format)  
	 the  $A_1, \dots, A_m$ are stored as rows of a  matrix $A$ and   
	  the  $X_1, \dots, X_{\ell+1}$ are  stored as rows of a  matrix $X.$ 
	  These files also contain the right hand side $b.$ 
	
	For each messy instance the files also include the matrices $F$ and $T$ that were used 
	to create it  from the corresponding clean instance.
	
	\item The ``.cbf'' and ``.dat-s'' files contain the same SDPs. 
	The   ``.cbf'' files can be directly read by  MOSEK and 
	the ``.dat-s'' files can be directly read by SDPA-GMP.

\end{enumerate}

The ``.jpg'' files in the ``image" subdirectories 
	contain visualizations of the matrices $A_i,X_j$ and $F$ and $T$ for each problem. Matrices 
	$A_1, \dots, A_{k+1}$ and $X_1, \dots, X_{\ell+1}$ are color coded, just like in Figure 
	\ref{figure-large-weak}. 

\paragraph{Computational testing}
	
	For computational testing we selected the SDP solvers MOSEK and SDPA-GMP as representative
	 industry standards. MOSEK is currently the only commercially available  SDP solver, it is fast and accurate on most industrial problems, however,  it has limited numerical precision. On  the other hand, SDPA-GMP can carry out calculations with precision $10^{-200}.$ 
	
	The results are in Table \ref{resultstable}, where we reported a solver's output as ``correct" if it marked 
	an instance  as infeasible.  
\begin{table}[H]
\centering
\begin{tabular}{|c|c|c|c|c|c|c|}
    \hline
     & \multicolumn{2}{c|}{Miniature} & \multicolumn{2}{c|}{Small} & \multicolumn{2}{c|}{Medium/Large} \\
     \cline{2-7}
     & Clean & Messy & Clean & Messy & Clean & Messy \\
    \hline
    MOSEK & 0 & 0 & 0 & 0 & 0 & 0 \\
    \hline
    SDPA-GMP & 10 & 10 & 0 & 2 & 0 & 0 \\
    \hline\hline
    Total correct & 10 & 10 & 10 & 10 & 20 & 20 \\
    \hline
\end{tabular}
\caption{\label{resultstable}Number of infeasible instances correctly identified by MOSEK and SDPA-GMP}
\end{table}

We see that while  MOSEK failed to identify infeasibility of any of the SDP instances, SDPA-GMP correctly 
identified the infeasibility of all miniature and of some small instances.  However, both solvers failed on the 
the larger instances.

	Besides standalone SDP solvers we also tested  two implementations of facial reduction. % on our instances. 	
	The first is the  Sieve-SDP algorithm \cite{zhu2019sieve}, which works very well 
	if the constraint matrices of an SDP are in semidefinite echelon form. Not surprisingly, Sieve-SDP quickly proved infeasibility 
of all clean instances, but failed on all messy instances. The second implementation is the facial reduction method of Permenter and Parrilo 
\cite{permenter2014partial}  which performed very similarly. 

The problem instances  are available from the webpage of the first author. 

\section{Discussion and conclusion} 
\label{section-discussion} 

We presented an echelon form of weakly infeasible SDPs that permits us to construct any weakly infeasible SDP and any bad projection of the psd cone by a combinatorial algorithm. We conclude with a discussion.

First we recall  normal forms  of  other types of SDPs and linear maps; these normal forms are similar in spirit, but much easier to derive. For example,  in  \cite{Pataki:17} and in Section 2.2.1  in \cite{pataki2019characterizing} we produced a normal form of 
{\em well-behaved} semidefinite systems of the form 
\begin{equation} \label{eqn-wellbehaved}   
\sum_{i=1}^m x_i A_i \preceq B,
\end{equation}
 and called the normal form a {\em good reformulation.}
The system \eqref{eqn-wellbehaved} with $B = 0$ is well behaved iff 
$\A \psdn$ is closed, hence we obtain a normal form of   linear maps that 
carry $\psdn$ to a {\em closed} set. 
This normal form also  permits us to construct any such linear map. 
%, see  \cite{pataki2019characterizing}  for details.
 In contrast, to derive such a normal (echelon) form 
of weakly infeasible SDPs and of  maps that carry $\psdn$ to a {\em nonclosed} set, 
we needed a much more technical proof. 

Next, we reinterpret Theorem \ref{thm-weak-ref} in two equivalent forms.

The first  interpretation is a  ``sandwich theorem" which we state in terms of weak infeasibility of $H \cap \psdn$ where the affine subspace $H$ is defined in \eqref{eqn-H}. 
\begin{theorem}
	The semidefinite program  $H \cap \psdn$ is weakly infeasible if and only if there are positive integers $k$ and $\ell, \, $ an invertible matrix $T, \,$ and sequences 
	$(A_1', \dots, A_{k+1}')$ and $(X_1, \dots, X_{\ell+1}),$ both in semidefinite echelon form 
	such that   
					\begin{equation*}
			\begin{array}{rcl}
			X_{\ell+1} + \lin \{ \, X_1, \dots, X_\ell \, \} & \subseteq & T^{\top} H T \\
			& \subseteq & \{ X ~:~ \, A_1' \bullet X = \dots = A_k' \bullet X = 0,~  A_{k+1}' \bullet X = -1   \}.
			\end{array}
			\end{equation*}
	\qed 
\end{theorem}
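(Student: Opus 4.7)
The plan is to derive the sandwich theorem as a coordinate-free restatement of Theorem \ref{thm-weak-ref}, in which the elementary row operations and congruence from Definition \ref{definition-reform} are absorbed into the single congruence $H \mapsto T^\top H T$.

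For the ``only if'' direction, I would apply Theorem \ref{thm-weak-ref} to obtain a reformulation \eqref{equation-p-weak} with echelon sequences $(A_1',\dots,A_{k+1}')$ and $(X_1,\dots,X_{\ell+1})$, both satisfying $k,\ell \ge 1$. By Remark \ref{remark-HT}, the reformulation is encoded by an invertible $m \times m$ matrix $G = (g_{ij})$ and an invertible $n \times n$ matrix $S$ with $A_i' = S^\top \bigl( \sum_j g_{ij} A_j \bigr) S$ and $b' = Gb$. Setting $T := S^{-\top}$, the substitution $Z = T^\top X T$ identifies $H$ with the affine subspace $\{Z : (T^{-1} A_j T^{-\top}) \bullet Z = b_j,\ j = 1,\dots,m\}$. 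Invertibility of $G$ lets me replace this system by the equivalent system $A_j' \bullet Z = b_j'$. The first $k+1$ of these equations read $A_1' \bullet Z = \dots = A_k' \bullet Z = 0$ and $A_{k+1}' \bullet Z = -1$, which is the outer inclusion. The inner inclusion follows because $\A' X_s = 0$ for $s \le \ell$ and $\A' X_{\ell+1} = b'$, so every element of $X_{\ell+1} + \lin\{X_1,\dots,X_\ell\}$ satisfies the full system $A_j' \bullet Z = b_j'$ and hence lies in $T^\top H T$.

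For the ``if'' direction, I would first observe that the congruence $X \mapsto T^\top X T$ is a bijection of $\psdn$ onto itself and distorts Euclidean distances by factors bounded by $\|T\|^2$ and $\|T^{-1}\|^2$, so it preserves weak infeasibility: $H \cap \psdn$ is weakly infeasible if and only if $(T^\top H T) \cap \psdn$ is. So I only need to prove the latter. The outer inclusion, combined with the echelon form of $(A_1',\dots,A_{k+1}')$, replays the infeasibility argument from the ``if'' direction of Theorem \ref{thm-weak-ref}: any feasible $Z \succeq 0$ would force $A_{k+1}' \bullet Z \ge 0$, contradicting $A_{k+1}' \bullet Z = -1$. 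The inner inclusion, combined with the echelon form of $(X_1,\dots,X_{\ell+1})$, replays the not-strongly-infeasible construction from the same proof: perturb $X_{\ell+1}$ by a tiny positive-definite diagonal block outside $Q_1 \cup \dots \cup Q_{\ell+1}$, then iteratively add large positive multiples of $X_\ell, X_{\ell-1},\dots,X_1$ to produce a psd matrix within $\epsilon$ of $T^\top H T$.

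The main bookkeeping obstacle is matching the direction of the congruence: Definition \ref{definition-reform} transforms the constraint matrices via $A_i \mapsto S^\top A_i S$, while the sandwich transforms the affine subspace via $H \mapsto T^\top H T$, and the correct correspondence is $T = S^{-\top}$. Once this is set up, the rest is a formal translation; the outer inclusion is dual to the $k+1$ reformulated equations forming an infeasibility certificate, and the inner inclusion is dual to the $X_i$ forming a generator certificate of not-strong-infeasibility, so no genuinely new ideas beyond those already in Theorem \ref{thm-weak-ref} are required.
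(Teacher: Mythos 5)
Your proposal is correct and is essentially the argument the paper leaves implicit: the sandwich theorem is a restatement of Theorem \ref{thm-weak-ref} with the reformulation data $(G,T)$ of Remark \ref{remark-HT} absorbed into a single congruence of $H$, and you correctly identify that the required congruence matrix is $S^{-\top}$, the inverse transpose of the matrix appearing in \eqref{eqn-Aip-bip}. You also correctly observe that in the ``if'' direction the two inclusions alone suffice without any equality: the outer inclusion feeds the infeasibility argument and the inner inclusion feeds the not-strongly-infeasible construction, both carried over verbatim from the proof of the ``if'' direction of Theorem \ref{thm-weak-ref}, together with the elementary fact that an invertible congruence preserves weak infeasibility.
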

Here $\lin \{ \, X_1, \dots, X_\ell \, \}$ stands for the linear span of $X_1, \dots, X_\ell, \,$ and 
$T^\top HT$ for the set $\{\,  T^{\top} X T~:~ X \in H  \, \}.$ 

The second  interpretation is a ``factorization" result. 
Suppose that \eqref{equation-p-weak} in Theorem \ref{thm-weak-1} was obtained  by elementary row operations that we represent  by an $m \times m$ matrix $G$ and by 
congruence transformations that we represent by an $n \times n$ matrix $T,$ see the discussion in Remark  \ref{remark-HT}.
We define the map $\T: \symn \rightarrow \symn$ as $\T(X) = TXT^{\top}$ for $X \in \symn.$ 
Then  in Theorem \ref{thm-weak-ref}  
the operators $\A$ and  $\A'$ and vectors $b$ and  $b'$ are related as %computed as 
\begin{equation*} 
\A' = G \A \T, \, b' = Gb, 
\end{equation*}
so $(\A,b)$ can be ``factorized"  into $(\A', b')$ (and vice versa).

	Next we comment on computing the echelon form \eqref{equation-p-weak}. 
		In general, we do not have an efficient or stable method to do that, since we would need to solve the SDPs that arise in  Lemma \ref{lemma-infeas} in exact arithmetic.
		However, 
		the complexity of even finding a feasible solution to an SDP is unknown (see  Remark \ref{remark-HT}).
			 Thus the echelon form \eqref{equation-p-weak}  is  similar in spirit to the Jordan normal  form of a matrix, which also must be computed in exact arithmetic \cite{golub1976ill}. At the same time, even though they are nontrivial to compute, 
			 both our echelon form, and the Jordan normal form yield both theoretical and practical insights.

We finally mention some intriguing research questions. First, it may be of interest to interpret our results from the viewpont of projective geometry, in the spirit of Naldi and Sinn's recent paper \cite{naldi2021conic}. Second, recall again Example \ref{example-motzkin}: here  the SDP from minimizing the sum-of-squares  (SOS) relaxation of the Motzkin polynomial is weakly infeasible, and is in the echelon form of \eqref{equation-p-weak} 
without having to reformulate it.  It would be interesting to see whether the same is true of other sum-of-squares SDPs.

\noindent{\bf Acknowledgments} We gratefully acknowledge the support of the 
National Science Foundation, award DMS-1817272. We thank Bernd Sturmfels and Didier Henrion for stimulating discussions. We also thank the anonymous referees for their helpful and detailed  suggestions.

% Authors must disclose all relationships or interests that 
% could have direct or potential influence or impart bias on 
% the work: 
%
% \section*{Conflict of interest}
%
% The authors declare that they have no conflict of interest.

% BibTeX users please use one of
%\bibliographystyle{spbasic}      % basic style, author-year citations
\bibliographystyle{plain}      % mathematics and physical sciences
\bibliography{weaksdp-FOCM}   % name your BibTeX data base

\begin{thebibliography}{10}

\bibitem{ahmadi2019dsos}
Amir~Ali Ahmadi and Anirudha Majumdar.
\newblock {DSOS} and {SDSOS} optimization: more tractable alternatives to sum
  of squares and semidefinite optimization.
\newblock {\em SIAM Journal on Applied Algebra and Geometry}, 3(2):193--230,
  2019.

\bibitem{bauschke1999strong}
Heinz~H Bauschke, Jonathan~M Borwein, and Wu~Li.
\newblock Strong conical hull intersection property, bounded linear regularity,
  {J}ameson's property ({G}), and error bounds in convex optimization.
\newblock {\em Mathematical Programming}, 86(1):135--160, 1999.

\bibitem{blekherman2006there}
Grigoriy Blekherman.
\newblock There are significantly more nonnegative polynomials than sums of
  squares.
\newblock {\em Israel Journal of Mathematics}, 153(1):355--380, 2006.

\bibitem{BorWolk:81}
Jonathan~M. Borwein and Henry Wolkowicz.
\newblock Regularizing the abstract convex program.
\newblock {\em Journal of Mathematical Analysis and Applications}, 83:495--530,
  1981.

\bibitem{burgisser2013condition}
Peter B{\"u}rgisser and Felipe Cucker.
\newblock {\em Condition: The geometry of numerical algorithms}, volume 349.
\newblock Springer Science \& Business Media, 2013.

\bibitem{fujisawa2008sdpaGMP}
K.~Fujisawa, M.~Fukuda, M.~Kojima, K.~Nakata, M.~Nakata, and M.~Yamashita.
\newblock {SDPA} (semidefinite programming algorithm) and {SDPA-GMP User's
  Manual -- Version 7.1.0}.
\newblock {\em Department of Mathematical and Computing Sciences, Tokyo
  Institute of Technology. Research Reports on Mathematical and Computing
  Sciences Series B-448}, 2008.

\bibitem{golub1976ill}
Gene~H. Golub and James~Hardy Wilkinson.
\newblock Ill-conditioned eigensystems and the computation of the {J}ordan
  canonical form.
\newblock {\em SIAM Review}, 18(4):578--619, 1976.

\bibitem{henrion2011semidefinite}
Didier Henrion.
\newblock Semidefinite representation of convex hulls of rational varieties.
\newblock {\em Acta {A}pplicandae {M}athematicae}, 115(3):319, 2011.

\bibitem{henrion2005detecting}
Didier Henrion and Jean~Bernard Lasserre.
\newblock Detecting global optimality and extracting solutions in
  {G}lopti{P}oly.
\newblock In {\em Positive polynomials in control}, pages 293--310. Springer,
  2005.

\bibitem{jameson1972duality}
Graham James~Oscar Jameson.
\newblock The duality of pairs of wedges.
\newblock {\em Proceedings of the London Mathematical Society}, 3(3):531--547,
  1972.

\bibitem{jiang2020bad}
Yuhan Jiang and Bernd Sturmfels.
\newblock Bad projections of the {PSD} cone.
\newblock {\em Collectanea {M}athematica}, 72(2):261--280, 2021.

\bibitem{josz2016strong}
C{\'e}dric Josz and Didier Henrion.
\newblock Strong duality in {Lasserre's} hierarchy for polynomial optimization.
\newblock {\em Optimization Letters}, 10(1):3--10, 2016.

\bibitem{klee1961asymptotes}
Victor Klee.
\newblock Asymptotes and projections of convex sets.
\newblock {\em Mathematica Scandinavica}, 8(2):356--362, 1961.

\bibitem{KlepSchw:12}
Igor Klep and Markus Schweighofer.
\newblock An exact duality theory for semidefinite programming based on sums of
  squares.
\newblock {\em Mathematics of Operations Research}, 38(3):569--590, 2013.

\bibitem{lasserre2001global}
Jean~Bernard Lasserre.
\newblock Global optimization with polynomials and the problem of moments.
\newblock {\em SIAM Journal on Optimization}, 11(3):796--817, 2001.

\bibitem{lasserre2019sdp}
Jean~Bernard Lasserre and Victor Magron.
\newblock In {SDP} relaxations, inaccurate solvers do robust optimization.
\newblock {\em SIAM Journal on Optimization}, 29(3):2128--2145, 2019.

\bibitem{liu2017exact}
Minghui Liu and G{\'a}bor Pataki.
\newblock Exact duals and short certificates of infeasibility and weak
  infeasibility in conic linear programming.
\newblock {\em Mathematical Programming}, 167(2):435--480, 2018.

\bibitem{liu2019new}
Yanli Liu, Ernest~K Ryu, and Wotao Yin.
\newblock A new use of {D}ouglas--{R}achford splitting for identifying
  infeasible, unbounded, and pathological conic programs.
\newblock {\em Mathematical Programming}, 177(1-2):225--253, 2019.

\bibitem{lourencco2016structural}
Bruno~F. Louren{\c{c}}o, Masakazu Muramatsu, and Takashi Tsuchiya.
\newblock A structural geometrical analysis of weakly infeasible {SDP}s.
\newblock {\em Journal of the Operations Research Society of Japan},
  59(3):241--257, 2016.

\bibitem{andersen2000mosek}
{MOSEK ApS}.
\newblock Introducing the {MOSEK} optimization suite 8.1.0.83, 2020.
\newblock Online; accessed October 20, 2021. Available from
  ``https://docs.mosek.com/8.1/intro/index.html''.

\bibitem{motzkin1967arithmetic}
Theodore~Samuel Motzkin.
\newblock The arithmetic-geometric inequality.
\newblock {\em Inequalities (Proc. Sympos. Wright-Patterson Air Force Base,
  Ohio, 1965)}, pages 205--224, 1967.

\bibitem{naldi2021conic}
Simone Naldi and Rainer Sinn.
\newblock Conic programming: Infeasibility certificates and projective
  geometry.
\newblock {\em Journal of Pure and Applied Algebra}, 225(7):106605, 2021.

\bibitem{nesterov1999infeasible}
Yurii Nesterov, Michael~J Todd, and Yinyu Ye.
\newblock Infeasible-start primal-dual methods and infeasibility detectors for
  nonlinear programming problems.
\newblock {\em Mathematical Programming}, 84(2):227--268, 1999.

\bibitem{papp2019sum}
D{\'a}vid Papp and Sercan Yildiz.
\newblock Sum-of-squares optimization without semidefinite programming.
\newblock {\em SIAM Journal on Optimization}, 29(1):822--851, 2019.

\bibitem{parrilo2003semidefinite}
Pablo~A Parrilo.
\newblock Semidefinite programming relaxations for semialgebraic problems.
\newblock {\em Mathematical {P}rogramming}, 96(2):293--320, 2003.

\bibitem{pataki2000simple}
G{\'a}bor Pataki.
\newblock A simple derivation of a facial reduction algorithm and extended dual
  systems.
\newblock {\em preprint http://gaborpataki.web.unc.edu/files/2018/07/fr.pdf},
  19, 2000.

\bibitem{Pataki:07}
G{\'a}bor Pataki.
\newblock On the closedness of the linear image of a closed convex cone.
\newblock {\em Mathematics of Operations Research}, 32(2):395--412, 2007.

\bibitem{Pataki:13}
G\'abor Pataki.
\newblock Strong duality in conic linear programming: facial reduction and
  extended duals.
\newblock In David Bailey, Heinz~H. Bauschke, Frank Garvan, Michel Th\'era,
  Jon~D. Vanderwerff, and Henry Wolkowicz, editors, {\em Proceedings of
  Jonfest: a conference in honour of the 60th birthday of Jon Borwein}.
  Springer, also available from http://arxiv.org/abs/1301.7717, 2013.

\bibitem{Pataki:17}
G{\'a}bor Pataki.
\newblock Bad semidefinite programs: they all look the same.
\newblock {\em SIAM Journal on Optimization}, 27(1):146--172, 2017.

\bibitem{pataki2019characterizing}
G{\'a}bor Pataki.
\newblock Characterizing bad semidefinite programs: normal forms and short
  proofs.
\newblock {\em SIAM Review}, 61(4):839--859, 2019.

\bibitem{pena2000understanding}
Javier Pe{\~n}a.
\newblock Understanding the geometry of infeasible perturbations of a conic
  linear system.
\newblock {\em SIAM Journal on Optimization}, 10(2):534--550, 2000.

\bibitem{pena2000computing}
Javier Pe{\~n}a and James Renegar.
\newblock Computing approximate solutions for convex conic systems of
  constraints.
\newblock {\em Mathematical Programming}, 87(3):351--383, 2000.

\bibitem{permenter2014partial}
Frank Permenter and Pablo~A Parrilo.
\newblock Partial facial reduction: simplified, equivalent sdps via
  approximations of the psd cone.
\newblock {\em Mathematical Programming}, 171(1):1--54, 2018.

\bibitem{Ramana:97}
Motakuri~V. Ramana.
\newblock An exact duality theory for semidefinite programming and its
  complexity implications.
\newblock {\em Mathematical Programming Series B}, 77:129--162, 1997.

\bibitem{renegar1994some}
James Renegar.
\newblock Some perturbation theory for linear programming.
\newblock {\em Mathematical Programming}, 65(1-3):73--91, 1994.

\bibitem{renegar1995linear}
James Renegar.
\newblock Linear programming, complexity theory and elementary functional
  analysis.
\newblock {\em Mathematical Programming}, 70(1-3):279--351, 1995.

\bibitem{Ren:01}
James Renegar.
\newblock {\em A Mathematical View of Interior-Point Methods in Convex
  Optimization}.
\newblock MPS-SIAM Series on Optimization. SIAM, Philadelphia, USA, 2001.

\bibitem{Waki:12}
Hayato Waki.
\newblock How to generate weakly infeasible semidefinite programs via
  {L}asserre's relaxations for polynomial optimization.
\newblock {\em Optimization Letters}, 6(8):1883--1896, 2012.

\bibitem{WakiMura:12}
Hayato Waki and Masakazu Muramatsu.
\newblock Facial reduction algorithms for conic optimization problems.
\newblock {\em Journal of Optimization Theory and Applications},
  158(1):188--215, 2013.

\bibitem{zhu2019sieve}
Yuzixuan Zhu, G{\'a}bor Pataki, and Quoc Tran-Dinh.
\newblock Sieve-{SDP}: a simple facial reduction algorithm to preprocess
  semidefinite programs.
\newblock {\em Mathematical Programming Computation}, 11(3):503--586, 2019.

\end{thebibliography}

% % Non-BibTeX users please use
% \begin{thebibliography}{}
% %
% % and use \bibitem to create references. Consult the Instructions
% % for authors for reference list style.
% %
% \bibitem{RefJ}
% % Format for Journal Reference
% Author, Article title, Journal, Volume, page numbers (year)
% % Format for books
% \bibitem{RefB}
% Author, Book title, page numbers. Publisher, place (year)
% % etc
% \end{thebibliography}

\end{document}